\theoremstyle{definition}
\newtheorem{theorem}{Theorem} 
\newtheorem{corollary}{Corollary} 
\newtheorem{lemma}{Lemma}
\newtheorem{proposition}{Proposition}
\newtheorem{assumption}{Assumption} 
\theoremstyle{remark}
\newtheorem{definition}{Definition} 
\newtheorem{remark}{Remark}
\begin{document}

\title{On the Divergence and Vorticity of Vector Ambit Fields}

\author{Orimar Sauri\thanks{This study was funded by the Villum Fonden as part of the project
number 11745 titled {\it''Ambit Fields: Probabilistic Properties and Statistical Inference''}.} \\
Department of Mathematics and CREATES\\
 Aarhus University\\
 osauri@math.au.dk }

\date{\today }

\maketitle
 
\begin{abstract}
This paper studies the asymptotic behavior of the flux and circulation
of a subclass of random fields within the family of 2-dimensional
vector ambit fields. We show that, under proper normalization, the
flux and the circulation converge stably in distribution to certain
stationary random fields that are defined as line integrals of a L\'evy
basis. A full description of the rates of convergence and the limiting
fields is given in terms of the roughness of the background driving
L\'evy basis and the geometry of the ambit set involved. We further
discuss the connection of our results with the classical Divergence
and Vorticity Theorems. Finally, we introduce a class of models that
are capable to reflect stationarity, isotropy and null divergence
as key properties.	
\end{abstract}
\textbf{Keywords:} Ambit fields, divergence, vorticity, Lévy bases,
infinite divisibility, stationary and isotropic fields, 2-dimensional
turbulence, Stoke's Theorem.

\section{Introduction\label{intro}}

A classical result of vector calculus, namely Stokes' Theorem, allows
to express the \textit{vorticity} (also known as curl) and \textit{divergence}
operators in terms of the \textit{circulation} and the \textit{flux}
of a vector field. In 2 dimensions, it states that 
\begin{align}
\int_{D}\nabla^{\perp}\cdot u(x,y)\mathrm{d}x\mathrm{d}y & =\ointop_{\partial D}u(s)\cdot n_{D}^{\perp}(s)\mathrm{d}s;\label{Classicalvorticity}\\
\int_{D}\nabla\cdot u(x,y)\mathrm{d}x\mathrm{d}y & =\ointop_{\partial D}u(s)\cdot n_{D}(s)\mathrm{d}s,\label{Classicaldivergence}
\end{align}
with $\nabla:=(\partial_{x},\partial_{y})'$ , $\nabla^{\perp}:=(-\partial_{y},\partial_{x})'$,
$u:\mathbb{R}^{2}\rightarrow\mathbb{R}^{2}$ a continuously differentiable
field, $D$ a compact set on $\mathbb{R}^{2}$ with area $\left|D\right|>0$
and smooth boundary $\partial D$, $n_{D}$ the outwards unitary vector
on $\partial D$ and $n_{D}^{\perp}$ is the unitary vector which
is perpendicular (counterclockwise) to $n_{D}$.

The quantities obtained by normalizing the right-hand side of (\ref{Classicalvorticity})
and (\ref{Classicaldivergence}) by $\left|D\right|$ are termed the
2-dimensional mean circulation and mean flux, respectively. In fluid
mechanics, the mean circulation measures the degree of rotation and
the mean flux measures the degree of incompressibility. If we let
$u$ be the 2-dimensional velocity field of a streaming fluid and
$D$ a disk, then the mean circulation and the mean flux will measure
the movement of the fluid along and through the region $D$, respectively.
The more the fluid is aligned to $\partial D$ (the larger the mean
circulation), the more the motion is of rotational type. A large positive
(negative) mean flux describes the situation where more (less) fluid
is entering $D$, which implies that the density of the fluid is increasing
(decreasing). Hence, when the radius of $D$ is small, the mean circulation
and mean flux quantify the pointwise rotation/vorticity and the pointwise
change of density of the fluid, respectively. The concept of incompressibility
expresses the fact that the density of the fluid is constant and this
property is a common assumption in many turbulence studies. Likewise,
vorticity and the related concept of vortex merging is believed to
be a main dynamic process for 2-dimensional turbulent flows. See for
instance \cite{BoffeEcke12} and \cite{RivWuYeun01}.
Having in mind application to turbulence modeling, it is therefore
crucial to understand the behavior of the limits \\

\begin{equation}
\lim_{r\downarrow0}\frac{1}{\pi r^{2}}\ointop_{\partial D}u(s)\cdot n_{D}(s)\mathrm{d}s;\,\,\,\,\,\lim_{r\downarrow0}\frac{1}{\pi r^{2}}\ointop_{\partial D}u(s)\cdot n_{D}^{\perp}(s)\mathrm{d}s.\label{divvorticitylim}
\end{equation}

The main goal of this paper is to study the limits appearing in (\ref{divvorticitylim})
for a certain class of non-smooth random fields belonging to the family
of \textit{ambit fields}. The class of ambit fields was introduced
originally in \cite{BNSch07} as a potential way to study the velocity
field in a turbulent flow. A distinctive characteristic of the ambit
stochastics approach, which distinguishes this from others, is that
it specifically incorporates additional inputs referred to as volatility
or intermittency. Another special feature is the presence of ambit
sets that delineate which part of space-time may influence the value
of the field at any given point in space-time. More specifically,
a random field $(Y_{t}(p))_{t\in\mathbb{R},p\in\mathbb{R}^{d}}$ is
said to be an ambit field if it admits the following dynamics 
\[
Y_{t}(p)=\mu+\int_{A_{t}(p)}F(t,s,p,q)\sigma_{s}(q)L(\mathrm{d}s\mathrm{d}q)+\int_{B_{t}(p)}G(t,s,p,q)\chi_{s}(q)\mathrm{d}s\mathrm{d}q,
\]
where $t$ denotes time while $p$ gives the position in $d$-dimensional
Euclidean space. Further, $A_{t}(p)$ and $B_{t}(p)$ are subsets
of $\mathbb{R}\times\mathbb{R}^{d}$, termed ambit sets, $F$ and
$G$ are deterministic weight functions, and $\sigma$ and $\chi$
are stochastic fields. Finally, $L$ denotes a L\'evy basis (i.e. an
independently scattered and infinitely divisible random measure).
For surveys on ambit fields and their relation to turbulence modeling,
we refer to \cite{HedSch14}, \cite{BNBEnthVeraat15}, \cite{BNBEnthVeraat11} and reference therein.
In this paper, we will focus on purely spatial stationary ambit fields
of the form 
\begin{equation}
Y(p)=\int_{\mathcal{R}+p}F(p-q)V(q)L(\mathrm{d}q),\label{ambiteq1}
\end{equation}

with $F$ a vector-valued function, $\mathcal{R}$ a compact set in
$\mathbb{R}^{2}$, $V$ a real-valued measurable random field, and
$L$ a real-valued homogeneous L\'evy basis.

The null-space version of ambit fields are called \textit{L\'evy semistationary
	processes} ($\mathcal{LSS}$ for short) which are stochastic processes
on a filtered probability space $(\Omega,\mathcal{F},(\mathcal{F}_{t})_{t\in\mathbb{R}},\mathbb{P})$
that are described by the formula 
\[
Y_{t}=\theta+\int_{-\infty}^{t}g(t-s)\sigma_{s}\mathrm{d}L_{s}+\int_{-\infty}^{t}q(t-s)a_{s}\mathrm{d}s,\qquad t\in\mathbb{R},
\]
where $\theta\in\mathbb{R}$, $L$ is a L\'evy process, $g$ and $q$
are deterministic functions such that $g(x)=q(x)=0$ for $x\leq0$,
and $\sigma$ and $a$ are adapted processes. When $L$ is a two-sided
Brownian motion, $Y$ is called a \textit{Brownian semistationary
	process} ($\mathcal{BSS}$). As further references to theory and applications
of $\mathcal{LSS}$, see for instance  \cite{Pakkanen11},
\cite{BNBEnthVeraat13} and \cite{PedSau15} and therein references. For recent results
on limit theorems see \cite{CorcueraHedPakkPod13} and \cite{BasseHeinrPod17}.

The relations (\ref{Classicalvorticity}) and (\ref{Classicaldivergence})
are particular cases of Stokes' Theorem which, in its standard form,
is stated for differentiable forms over smooth manifolds. We refer
to \cite{Katz79} for an interesting historical review on this topic.
Several extensions of Stokes' Theorem can be found in the literature,
mainly those involving surface/line integrals of smooth forms over
non-smooth regions, e.g. fractals or paths of stochastic processes.
See for instance \cite{Hsu2002}, \cite{IkedaManabe79} and references
therein. A non-stochastic approach is described in \cite{Harrison99}, where the authors proved a version
of Stokes' Theorem for non-smooth manifolds. This is done by introducing
a certain type of surface/line integral of smooth forms over what
is called \textit{chainlets}, which turned out to be a general class
of regions that contains, among others, smooth sub-manifolds, fractals
and vector fields. In contrast, very little has been done in the other
direction, i.e. to consider line/surface integrals of non-smooth forms
over smooth manifolds. This paper intends to develop some results
in that direction. To our knowledge, the only existing work in relation
to non-smooth forms is \cite{Zust11}, in which the author, by employing
Young's approach (see \cite{Young36}), introduced an integral for
non-smooth forms over Lipschitz manifolds. However, the stated version
of Stokes' Theorem requires the form to be constant.

The organization of the present work is as follows: In Section 2 we
introduce the basic notations as well as the basic assumptions. We
also recall several results and concepts related to stable convergence
of r.v.s, L\'evy bases and infinite divisibility. We further give some geometrical preliminaries. Our
main results, concerning the asymptotic behavior of the flux and circulation,
is stated in Section 3. Specifically, we show that under proper normalization,
the flux and the circulation of a purely spatial stationary ambit
field converge stably to certain random fields that are defined in
terms of a separable L\'evy basis whose control measure is the 1-dimensional
Hausdorff measure. We postpone their proof to Section 5. As an application of our
results, we introduce in Section 4 a class of purely spatial and $\mathbb{R}^{2}$-valued
ambit fields which have stationary and isotropic increments. Such
a family of fields was originally introduced jointly with Ole E. Barndorff-Nielsen
and J\"urgen Schmiegel as a potential modeling framework for 2-dimensional
turbulent flows. Moreover, these fields are rotational and have the
property of incompressibility. We also include two appendixes. Appendix A provides
a Steiner-type formula for closed sets, and briefly describes the convergence of stochastic integrals with respect to L\'evy bases. Appendix B focus on technical results that are used in the proof of our main results.

\section{Preliminaries and basic notation\label{sec:Preliminaries-and-basic}}

This part is devoted to introduce the basic notations as well as to
recall several basic results and concepts that will be used through
this paper.

\subsection{Stable convergence\label{Preliminariesstableconv}}

For the rest of this paper we will consider $\left(\Omega,\mathcal{F},\mathbb{P}\right)$
to be a complete probability space. As usual, the
notation $\overset{\mathbb{P}}{\rightarrow}$ means convergence in
probability and the notation $X_{n}=o_{\mathbb{P}}(Y_{n})$ means
that $X_{n}/Y_{n}$$\overset{\mathbb{P}}{\rightarrow}$0 when $n\rightarrow\infty$.
Given a sub-$\sigma$-field $\mathcal{G}\subseteq\mathcal{F}$ and
a sequence of random variables (r.v.'s
for short) $(\xi_{n})_{n\geq1}$ on $\left(\Omega,\mathcal{F},\mathbb{P}\right)$,
the notation $\xi_{n}\overset{\mathcal{G}\text{-}d}{\longrightarrow}\xi$
will mean that as $n\rightarrow \infty$, $\xi_{n}$ converges $\mathcal{G}$-stably in distribution
towards a random variable (r.v. for short) $\xi$ (defined possibly
on an extension of $\left(\Omega,\mathcal{F},\mathbb{P}\right)$),
that is, for any $F\in\mathcal{G}$, with $\mathbb{\mathbb{P}}(F)>0$,
conditioned on the event $F$, $\xi_{n}$$\rightarrow\xi$, weakly.
In the same framework, if $(X_{n}(p))_{p\in\mathbb{R}^{d},n\in\mathbb{N}}$
is a family of random fields, we will write $X_{n}\overset{\mathcal{G}\text{-}fd}{\longrightarrow}X$
if the finite-dimensional distributions (f.d.d. for short) of $X_{r}$
converge $\mathcal{G}$-stably toward the f.d.d. of $X$. We refer
the reader to \cite{HauslerLuschgy15} for a concise exposition of
stable convergence.

\subsection{L\'evy bases and infinite divisibility\label{PreliminariesLBID}}

The symbols $D_{r}(p)$ and  $r\mathbb{S}^{d-1}(p)$ will denote the closed disk and the sphere with center $p$
and radius $r$. When $p=0$, we will just write $D_{r}$ and $r\mathbb{S}^{d-1}$ instead
of $D_{r}(0)$ and $r\mathbb{S}^{d-1}(p)$, respectively. For any $A\subseteq\mathbb{R}^{d},$ we let $-A=\{-x:x\in A\}$.
Furthermore, we denote by $\mathring{A},\overline{A},\partial A\text{ and }A^{c}$
the interior, the closure, the boundary and the complement of $A$,
respectively and we put $A^{*}=\overline{A^{c}}$. The inner product
and the norm of vectors $x,y\in\mathbb{R}^{d}$ will be represented
by $x\cdot y$ and $\left\Vert x\right\Vert $, respectively. Let
$\mu$ be a measure on $\mathcal{B}(\mathbb{R}^{d})$, the Borel sets
on $\mathbb{R}^{d}$, and let $\mathcal{B}_{b}^{\mu}(\mathbb{R}^{d}):=\{A\in\mathcal{B}(\mathbb{R}^{d}):\mu(A)<\infty\}.$
The family $L=\{L\left(A\right):A\in\mathcal{B}_{b}^{\mu}(\mathbb{R}^{d})\}$
of real-valued r.v.'s will be called a \textit{L\'evy basis} if it is
an infinitely divisible (ID for short) independently scattered random
measure, that is, $L$ is $\sigma$-additive almost surely and such
that for any $A,B\in\mathcal{B}_{b}^{\mu}(\mathbb{R}^{d})$, $L(A)$
and $L(B)$ are ID r.v.'s that are independent whenever $A\cap B=\emptyset$.
The cumulant of a r.v. $\xi$, in case it exists, will be denoted
by $\mathcal{C}(z\ddagger\xi):=\log\mathbb{E}(e^{iu\xi})$. We will
say that $L$ is \textit{separable} with \textit{control measure}
$\mu$, if 
\[
\mathcal{C}(z\ddagger L(A))=\mu(A)\psi(z),\,\,\,A\in\mathcal{B}_{b}^{\mu}(\mathbb{R}^{d}),z\in\mathbb{R},
\]
where 
\[
\psi(z):=i\gamma z-\frac{1}{2}b^{2}z^{2}+\int_{\mathbb{R}\backslash\{0\}}(e^{izx}-1-izx\mathbf{1}_{\left|x\right|\leq1})\nu(\mathrm{d}x),\,\,\,z\in\mathbb{R},
\]
with $\gamma\in\mathbb{R},$ $b\geq0$ and $\nu$ is a L\'evy measure,
i.e. $\nu(\{0\})=0$ and $\int_{\mathbb{R}\backslash\{0\}}(1\land\left|x\right|^{2})\nu(\mathrm{d}x)<\infty$.
When $\mu=Leb$, in which $Leb$ represents the Lebesgue measure on
$\mathbb{R}^{d}$, $L$ is called \textit{homogeneous}. The ID r.v.
associated to the characteristic triplet $\left(\gamma,b,\nu\right)$
is called the \textit{L\'evy seed} of $L$ and will be denoted by $L'$.
As usual, $\left(\gamma,b,\nu\right)$ will be called the characteristic
triplet of $L$ and $\psi$ its characteristic exponent. In this paper,
the sigma field generated by $L$ is denoted by $\mathcal{F}_{L}$.

For any L\'evy measure $\nu$, we associate the functions $\nu^{\pm}:(0,1)\rightarrow\mathbb{R}^{\text{+}}$,
defined as $\nu^{+}(x):=\nu(x,\infty)$ and $\nu^{-}(x):=\nu(-\infty,-x).$
Let $K_{+},K_{-}\geq0$ and $0<\beta\leq2$. A separable L\'evy basis is called
strictly $\beta$-stable with parameters $(K_{+},K_{-},\beta,\gamma)$
if its L\'evy seed is distributed according to a strictly $\beta$-stable
distribution, that is, $L'$ is Gaussian if $\beta=2$, while for $\beta<2$ the characteristic triplet of $L'$ has no
Gaussian component ($b=0$), its L\'evy measure satisfies 
\[
\frac{\nu(\mathrm{d}x)}{\mathrm{d}x}=K_{+}\left|x\right|^{-1-\beta}\mathbf{1}_{\{x>0\}}+K_{-}\left|x\right|^{-1-\beta}\mathbf{1}_{\{x<0\}},
\]
and $\gamma=(K_{+}-K_{-})/(1-\beta)$ if $\beta\neq1$, and $\gamma$
arbitrary with $K_{+}=K_{-}$when $\beta=1$.

\subsection{Geometrical preliminaries\label{PreliminariesJCHausdorff}}
Fix $A\subseteq\mathbb{R}^{d}$ a closed set and denote by $\mathcal{H}^{n}$ the $n$th-dimensional Hausdorff measure. The \textit{normal cone} of $A$ at $p\in A$ is defined as 
\[
\mathrm{nor}(A,p):=\{u\in\mathbb{R}^{d}:u\cdot v\leq0,\,v\in\mathrm{Tan}(A,p)\},
\]
where $\mathrm{Tan}(A,p)$ denotes the set of all \textit{tangent
	vectors} to $A$ at $p$, that is, $v\in\mathrm{Tan}(A,p)$ if and
only if there is a sequence $(p_{n})\subseteq A\backslash\{p\}$ such
that $p_{n}\rightarrow p$ and $\frac{p_{n}-p}{\left\Vert p_{n}-p\right\Vert }\rightarrow\frac{v}{\left\Vert v\right\Vert }$,
as $n\rightarrow\infty$.
We recall that a \textit{ Jordan curve} $\emptyset\neq C\subset\mathbb{R}^{d}$
is a curve in $\mathbb{R}^{d}$ parametrized by $\varphi:[0,1]\rightarrow C$,
such that $\varphi$ is continuous and injective on $(0,1)$ and $\varphi(0)=\varphi(1)$.
We will say that a compact set $A\subseteq\mathbb{R}^{2}$ is a \textit{Jordan
	domain} if $\mathring{A}$ is totally connected and $\partial A\neq\emptyset$
is a Jordan curve. In this framework, we say that a Jordan domain
$A$ has \textit{Lipschitz-regular} boundary if the parametrization of
$\partial A$ is Lipschitz and for $\mathcal{H}^{1}$-a.a. $q\in\partial A$ there is $u_A(q)\in\mathbb{S}^{d-1}$ orthogonal to $\mathrm{Tan}(\partial A,q)$, such that
\[ \mathrm{nor}(A,q)=\{\lambda u_{A}(q):\lambda\geq0\};\,\,\,\mathrm{nor}(A^{*},p):=\{-\lambda u_{A}(q):\lambda\geq0\}.\] 
In other words, a Jordain domain with Lipschitz boundary is regular if it has unique outwards and inwards unit vectors almost everywhere.

The \textit{metric projection} on $A$,
$\Pi_{A}:\mathbb{R}^{d}\rightarrow A$, is the set function
\[
\Pi_{A}(q):=\{p\in A:d_{A}(q)=\left\Vert p-q\right\Vert \},
\]
where $d_{A}(q):=\inf_{p\in A}\left\Vert p-q\right\Vert$. We set 
\[
\mathrm{Unp}A:=\{q\in\mathbb{R}^{d}:\exists!p\in A\text{ s.t. }d_{A}(q)=\left\Vert p-q\right\Vert \}.
\]
Under the previous notation, the {\it reduced normal bundle} and the {\it reach function} of $A$ are  given, respectively, by 
\[
N(A)=\{(\Pi_{A}(q),\frac{q-\Pi_{A}(q)}{\left\Vert q-\Pi_{A}(q)\right\Vert }):q\in\mathrm{Unp}(A)\backslash A\},
\]
and, $\delta_A(q,u):=0$ for $(q,u)\in N(A)^c$ and for $(q,u)\in N(A)$
\[
\delta_A(q,u):=\inf\{t\geq0:q+tu\in\mathrm{Unp}(A)^{c}\}.
\]
For $r\geq0$, the $r$-\textit{parallel set} of $A$ is defined as
\[
A_{\oplus r}:=\{q\in\mathbb{R}^{d}:d_{A}(q)\leq r\}.
\]
\section{Divergence and Vorticity Theorems for Ambit fields}

For the rest of this section we will be interested in the asymptotic
behavior of the following functionals
\begin{align}
\mathscr{C}_{r}(p;X):= & \ointop_{r\mathbb{S}^1(p)}X(q)\cdot \mathrm{d}q=r\int_{0}^{2\pi}X(p+ru(\theta))\cdot u^{\perp}(\theta)\mathrm{d}\theta,\,\,\,p\in\mathbb{R}^{2},r>0,\label{generalizedcirculation}\\
\mathcal{\mathscr{D}}_{r}(p;X):= & \ointop_{r\mathbb{S}^1(p)}X(s)\cdot n(s)\mathrm{d}s=r\int_{0}^{2\pi}X(p+ru(\theta))\cdot u(\theta)\mathrm{d}\theta,\,\,\,p\in\mathbb{R}^{2},r>0,\label{eq:generalizedflux}
\end{align}
as $r\downarrow0$. Above $X$ is a vector-valued random field, $n$
is the outward unit vector in $r\mathbb{S}^1(p)$, that is, $n(p+ru(\theta))=u(\theta)$,
with $u(\theta):=(\cos(\theta),\sin(\theta))'$ for $0\leq\theta\leq2\pi$,
and $(x,y)^{\perp}=(-y,x)$.

When the mapping $p\mapsto X(p)$ is smooth almost surely, by the
usual Stokes' Theorem, it holds that
\begin{align*}
\lim_{r\downarrow0}\frac{1}{2\pi r^{2}}\mathscr{C}_{r}(p;X)\stackrel{a.s.}{\rightarrow}\nabla^{\perp}\cdot X(p),\,\,\,p & \in\mathbb{R}^{2};\\
\lim_{r\downarrow0}\frac{1}{2\pi r^{2}}\mathscr{D}_{r}(p;X)\stackrel{a.s.}{\rightarrow}\nabla\cdot X(p),\,\,\,p & \in\mathbb{R}^{2}.
\end{align*}
where $\nabla:=(\partial_{x},\partial_{y})'$ and $\nabla^{\perp}:=(-\partial_{y},\partial_{x})'$.
However, not surprisingly, such a result does not hold anymore when
one consider fields of the form of (\ref{ambiteq1}). Furthermore,
as expected, when the kernel $F$ is smooth enough, the rates of convergence
for $\mathscr{C}_{r}$ and $\mathcal{\mathscr{D}}_{r}$ depend entirely
on the ambit set and background driving L\'evy basis. Before presenting
our main results we first explain the intuition behind them.

\subsection{Some intuitive description\label{subsec:Some-intuitive-description}}

Previously, we mentioned that when the kernel involved in the definition
of (\ref{ambiteq1}) is smooth, then the asymptotic behavior of $\mathscr{C}_{r}$
and $\mathcal{\mathscr{D}}_{r}$ would be determined by the background
driving L\'evy basis and the ambit set. In this subsection we will give
an intuitive description of why this would be the case. As a motivation
and starting point in our analysis, let us first describe what is
known in the one-dimensional case. For $s_{0}>0$ and $t\in\mathbb{R},$
let $\mathcal{R}(t):=[-s_{0}+t,t]$ and put
\[
X_{t}:=\int_{\mathcal{R}(t)}f(t-s)\mathrm{d}L_{s}.
\]
where $L$ denotes a L\'evy process on $\mathbb{R}$ with characteristic
triplet $(\gamma,b,\nu)$, and $f$ a real-valued function. In this
case for $r>0$, we get that
\begin{equation}
\ointop_{r\mathbb{S}^0(t)}X_{s}\cdot\mathrm{d}s  =X_{t+r}-X_{t-r}=\ointop_{r\mathbb{S}^0(t)}\partial X_{s}\cdot\mathrm{d}s+\ointop_{r\mathbb{S}^0(t)}\mathring{X}_{s}\cdot\mathrm{d}s,\label{eq:decompositionint1}
\end{equation}
where
\[ \partial X_{t}:=  f(0)L_{t}-f(s_{0})L_{t-s_{0}};\,\,\,\mathring{X}_{t}:=  X_{t}-\partial X_{t}. \]
The notation $\partial X,\mathring{X}$ is not by chance, many properties
of $\partial X\text{ and }\mathring{X}$ are completely determined
by the interaction of $f$ and $L$ on $\partial\mathcal{R}(t)$ and
$\mathring{\mathcal{R}}(t)$, respectively. We first observe that
$\mathring{X}$ admits the representation 
\[
\mathring{X}_{t}=\int_{\mathcal{R}(t)}g(t-s)\mathrm{d}L_{s},
\]
where $g$ is absolutely continuous and with $\left.g(\cdot)\right|_{-\partial\mathcal{R}}\equiv0$,
and that
\[
\ointop_{r\mathbb{S}^0(t)}\partial X_{s}\cdot\mathrm{d}s=\int_{\left(\partial\mathcal{R}(t)\right)_{\oplus r}}h(t-s)\mathrm{d}L_{s},
\]
for some measurable function $h$. Additionally, we have that if $f$
is continuously differentiable, then the path $t\mapsto\mathring{X}_{t}$
is almost surely absolutely continuous (see \cite{BasseRosinski13},
cf. \cite{BravermanSanorod98} and \cite{BassePed09} for more details)
in such a way that
\begin{equation}
\frac{1}{2r}\ointop_{r\mathbb{S}^0(t)}\mathring{X}_{s}\cdot\mathrm{d}s\overset{\mathbb{P}}{\rightarrow}\int_{\mathcal{R}(t)}f'(t-s)\mathrm{d}L_{s},\,\,\,t\geq0.\label{limitinterior}
\end{equation}

On the other hand, $\ointop_{r\mathbb{S}^0(t)}\partial X_{s}\cdot\mathrm{d}s$
consists of the increments of size $r>0$ of $L$ around $\partial\mathcal{R}(t)$.
Therefore, under proper normalization, $\ointop_{r\mathbb{S}^0(t)}\partial X_{s}\cdot\mathrm{d}s$
has a non-trivial limit if and only if the same property holds for
the increments of $L$. In connection to the former, it is well known
that when $b>0$, the increments of $L$ are totally dominated by
its Gaussian component. Moreover, if $L$ is of bounded variation,
then the increments of $L$ are totally dominated by the drift component.
For these facts we refer to \cite[p.~16]{Bertoin98}. Ultimately,
when $b=0$ and $L$ is of unbounded variation, typically the increments
of $L$ are in the domain of attraction of a strictly $\beta$-stable
distribution with $L_{t+r}-L_{t-r}=O_{\mathbb{P}}(r^{1/\beta})$, for
some $1\leq\beta<2$. All in all then give us the following asymptotics for $\ointop_{r\mathbb{S}^0(t)}X_{s}\cdot\mathrm{d}s$
\begin{enumerate}
	\item \textbf{Gaussian regime}: If $\left.f\right|_{-\partial\mathcal{R}}\neq0$
	and $b>0$, then $\ointop_{r\mathbb{S}^0(t)}X_{s}\cdot\mathrm{d}s=O_{\mathbb{P}}(r^{1/2})$
	with Gaussian limit.
	\item \textbf{Stable regime}: If $\left.f\right|_{-\partial\mathcal{R}}\neq0$,
	$b=0$ , $L$ is of unbounded variation, then $\ointop_{r\mathbb{S}^0(t)}X_{s}\cdot\mathrm{d}s=O_{\mathbb{P}}(r^{1/\beta})$
	with strictly $\beta$-stable limit, for some $1\leq\beta<2$.
	\item \textbf{``Classical'' regime:} If $L$ is of bounded variation or
	if $\left.f\right|_{-\partial\mathcal{R}}=0$ with arbitrary $L$,
	then $\ointop_{r\mathbb{S}^0(t)}X_{s}\cdot\mathrm{d}s=O_{\mathbb{P}}(r)$.
\end{enumerate}

As a final remark for the one-dimensional framework, we note that
when $f$ is not smooth enough, the regimes previously stated are
not valid anymore. For this situation, we refer to \cite{CorcueraHedPakkPod13},
\cite{BasseLachPod17} and references therein.

Now, when we consider the ID field given by 
\begin{equation}
X(p):=\int_{\mathcal{R}+p}F(p-q)L(\mathrm{d}q).\label{eq:ambitfield}
\end{equation}
with $F$ a vector valued function, $\mathcal{R}$ a compact set in
$\mathbb{R}^{2}$ and $L$ a real-valued homogeneous L\'evy basis, we
may in principle try to follow the same reasoning as in the temporal
case and expect to recover similar results. Thus, we may try to
decompose $X$ as
\begin{equation}
X(p)=\partial X(p)+\mathring{X}(p),\label{eq:formalrepresentation}
\end{equation}
for some fields $\partial X$ and $\mathring{X}$ whose trajectories
are totally determined by the behavior of $L$ and $F$ on $\partial\mathcal{R}(p)$
and $\mathring{\mathcal{R}}(p)$, respectively. Unfortunately, to
our knowledge, there is no a general identification of the fields
$\partial X$ and $\mathring{X}$ appearing in (\ref{eq:formalrepresentation}),
except in very special cases (see for instance \cite{CairoliWalsh75}). However, our proof
is based on an analogous decomposition to that in (\ref{eq:decompositionint1}).
Specifically, we decompose 
\begin{equation}
\ointop_{r\mathbb{S}^1(p)}X\cdot n\mathrm{d}s=\int_{\left(\partial\mathcal{R}(p)\right)_{\oplus r}}H(p-q)L(\mathrm{d}q)+E_r(p),\label{decompositiondiv}
\end{equation}
for a certain smooth function $H$ and $E_r(p)$ a random field satisfying
that as $r\downarrow 0$
\begin{align*}
\frac{1}{\left|D_{r}(p)\right|}E(p) & \overset{\mathbb{P}}{\rightarrow}\int_{\mathcal{R}(p)}\nabla\cdot F(p-q)L(\mathrm{d}q).
\end{align*}
Therefore, as in the one-dimensional case, understanding the asymptotic
behavior of $\ointop_{r\mathbb{S}^1(p)}X\cdot n\mathrm{d}s$ requires
a full knowledge of the asymptotic behavior of $L$ on the $r$-parallel
sets of $\mathcal{R}(p)$.

\subsection{The divergence and vorticity theorems}

In this part we present our main results about $\mathscr{C}_{r}$
and $\mathcal{\mathscr{D}}_{r}$ for the stationary ID field
\begin{equation}
X(p):=\int_{\mathcal{R}+p}F(p-q)L(\mathrm{d}q),\,\,\,p\in\mathbb{R}^{2}.\label{IDXdef}
\end{equation}
with $F$ continuously differentiable on $-\mathcal{R}$ and $L$
a real-valued homogeneous L\'evy basis with characteristic triplet $\left(\gamma,b,\nu\right)$.
In the next subsection we will study the case in which an additional
stochastic field $V$ is included in (\ref{IDXdef}).

For the rest of this paper we will be working under the following
assumption on the ambit set $\mathcal{R}$.

\begin{assumption}\label{assumptionambitset1}The ambit set $\mathcal{R}\subseteq\mathbb{R}^{2}$
	can be written as 
	\[
	\mathcal{R}=\mathcal{R}_{1}\backslash\bigcup_{i=2}^{n}\mathcal{R}_{i},
	\]
	where $\mathcal{R}_{1},\ldots,\mathcal{R}_{n}$ are Jordan
	domains with Lipschitz-regular boundary satisfying that $\mathcal{R}_{i}\cap\mathcal{R}_{j}=\emptyset$ and  $\mathcal{R}_{i}\subset\mathcal{R}_{1}$
	for $i,j=2,\ldots,n$ and $i\neq j$. Furthermore, for $i=1,\ldots,n$
	\[
	\int_{\mathbb{S}^{1}}\left\lbrace  \sum_{q:(q,u)\in N(\mathcal{R}_{i})}(\delta_{\mathcal{R}_{i}}(q,u)\wedge1)+\sum_{q:(q,u)\in N(\mathcal{R}_{i}^{*})}(\delta_{\mathcal{R}_{i}^{*}}(q,u)\wedge1)\right\rbrace\mathcal{H}^{1}(\mathrm{d}u)<\infty.
	\]
\end{assumption}
Some examples of sets satisfying Assumption \ref{assumptionambitset1} will be presented in the next section. However, the type of sets we have in mind can be visualized in Figure \ref{ambitsetR}. 
\begin{figure}[h]
	\center\includegraphics[width=0.75\textwidth]{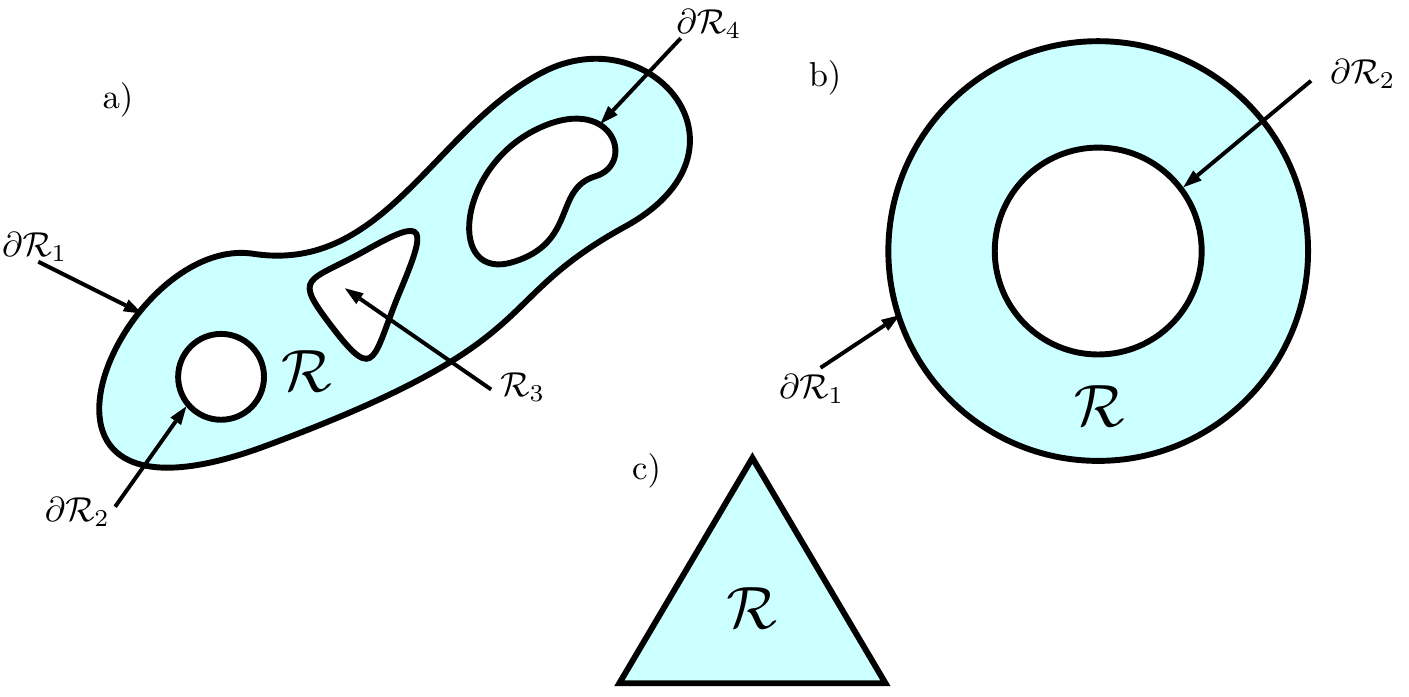}\caption{\label{ambitsetR}Typical examples of the type of ambit sets considered
		in Assumption \ref{assumptionambitset1}.}
\end{figure}

\begin{remark}\label{remarkambitset} Assumption \ref{assumptionambitset1} allows to extended the definition of $u_{\mathcal{R}}$ (the outward unit vector) to the whole $\partial\mathcal{R}$ by letting $u_{\mathcal{R}}(q)\equiv0$ in the irregular points of $\partial\mathcal{R}(p)$. Furthermore, since $\mathcal{R}(p)$ is just a translation of $\mathcal{R}$,
	we get that $\mathcal{R}(p)$ satisfies Assumption \ref{assumptionambitset1}
	if and only if $\mathcal{R}$ does and in this case $u_{\mathcal{R}(p)}(q+p)=u_{\mathcal{R}}(q)$
	for $q\in\partial\mathcal{R}$.
\end{remark}

As in Subsection \ref{subsec:Some-intuitive-description},
our analysis will be divided into three different scenarios. For the sake of exposition, all the proofs
of this section will be postponed to Section \ref{sec:Proofs}. Let
us start with the case in which the Gaussian part of $L$ dominates
the asymptotics. In order to improve the presentation of this result
let us first introduce the limiting fields. Recall the
notation $(x,y)^{\perp}=(-y,x).$ Given $b>0$, $F$ as above and
$\mathcal{R}$ as in Assumption \ref{assumptionambitset1}, the fields
$\{\mathscr{C}_{W}(p;F,\mathcal{R}),\mathscr{D}_{W}(p;F,\mathcal{R})\}_{p\in\mathbb{R}^{2}}$
will denote a collection of stationary Gaussian fields defined for
any $p\in\mathbb{R}^{2}$ as
\begin{align*}
\mathscr{D}_{W}(p;F,\mathcal{R}):= & \int_{\partial\mathcal{R}(p)}F(p-q)\cdot u_{\mathcal{R}(p)}(q)W_{\mathcal{H}^{1}}(\mathrm{d}q),\\
\mathscr{C}_{W}(p;F,\mathcal{R}):= & \int_{\partial\mathcal{R}(p)}F(p-q)\cdot u_{\mathcal{R}(p)}^{\perp}(q)W_{\mathcal{H}^{1}}(\mathrm{d}q),
\end{align*}
where $u_{\mathcal{R}(p)}$ is as in Remark \ref{remarkambitset}.
Furthermore, $W_{\mathcal{H}^{1}}$ is a separable Gaussian L\'evy basis
(see Subsection \ref{PreliminariesLBID}) defined on an extension
of $\left(\Omega,\mathcal{F},\mathbb{P}\right)$ having the following
properties: 1) Its L\'evy seed has a centered Gaussian distribution
with variance $b^{2}$; 2) Its control measure is $\mathcal{H}^{1}$,
the 1-dimensional Hausdorff measure; 3) $W_{\mathcal{H}^{1}}$ is
independent of $L$.

In this setting, using the notation $v_{\beta}:=2\{\int_{-1}^{1}(1-s^{2})^{\beta/2}\mathrm{d}s\}^{1/\beta}$,
we have that:

\begin{theorem}[{\bf Gaussian attractor}]\label{gaussianattractor}
	Let $\mathcal{R}\subset\mathbb{R}^{2}$ be as in Assumption \ref{assumptionambitset1}.
	Consider $X$ as in (\ref{IDXdef}) with $\left.F\right|_{-\mathcal{\partial R}}\neq0$.
	If $b>0$, then as $r\downarrow0$
	\begin{align*}
	\frac{1}{v_{2}r^{1+1/2}}\mathscr{C}_{r}(p;X) & \overset{\mathcal{F}\text{-}fd}{\longrightarrow}\mathscr{C}_{W}(p;b,F,\mathcal{R});\\
	\frac{1}{v_{2}r^{1+1/2}}\mathscr{D}_{r}(p;X) & \overset{\mathcal{F}\text{-}fd}{\longrightarrow}\mathscr{D}_{W}(p;b,F,\mathcal{R}).
	\end{align*}
	
\end{theorem}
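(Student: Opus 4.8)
\textbf{Proof proposal for Theorem \ref{gaussianattractor}.}

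The plan is to split the line integral $\mathscr{D}_r(p;X)$ into a ``boundary'' part that is carried by the Gaussian component of $L$ on a thin parallel set of $\partial\mathcal{R}(p)$ and an ``interior'' remainder that converges, after normalization by $r^2$, to an integral against $L$; since the Gaussian scaling exponent $1/2$ beats the classical exponent $1$, the interior part is negligible at the rate $r^{3/2}$. Concretely, I would first reduce to the Gaussian case by decomposing $L = b\,W + \tilde L$, where $W$ is a homogeneous Gaussian L\'evy basis with unit L\'evy seed variance and $\tilde L$ is an independent homogeneous L\'evy basis with no Gaussian component; by linearity $\mathscr{D}_r(p;X) = \mathscr{D}_r(p;X^W) + \mathscr{D}_r(p;X^{\tilde L})$ with the obvious notation. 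For the non-Gaussian part $X^{\tilde L}$, the intuitive dichotomy recalled in Section \ref{subsec:Some-intuitive-description} gives $\mathscr{D}_r(p;X^{\tilde L}) = O_{\mathbb{P}}(r^{1+1/\beta})$ with $\beta < 2$ (or $O_{\mathbb P}(r^2)$ in the bounded-variation case), hence $o_{\mathbb{P}}(r^{3/2})$; I would make this rigorous using the decomposition (\ref{decompositiondiv}) applied to $X^{\tilde L}$ together with the moment/tail estimates for integrals against $\tilde L$ over the parallel sets $(\partial\mathcal{R}(p))_{\oplus r}$, whose Lebesgue measure is $O(r)$ by the Steiner-type formula of Appendix A and Assumption \ref{assumptionambitset1}.

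The heart of the argument is the Gaussian part. Using the change of variables for the line integral together with Fubini and the fact that $F$ is $C^1$ on $-\mathcal{R}$, I would write
\[
\mathscr{D}_r(p;X^W) = \int_{(\partial\mathcal{R}(p))_{\oplus r}} H_r(p,q)\,W(\mathrm{d}q) + \text{(interior term)},
\]
where $H_r(p,\cdot)$ is an explicit kernel supported on the $r$-parallel set of $\partial\mathcal{R}(p)$; the interior term is handled exactly as the one-dimensional estimate (\ref{limitinterior}) and contributes only at order $r^2$. For the boundary term I would pass to ``tube'' coordinates around $\partial\mathcal{R}(p)$: for $\mathcal{H}^1$-a.e.\ regular $q\in\partial\mathcal{R}(p)$ the normal line through $q$ meets the parallel set in a segment of length $\asymp 2r$, and on that segment the geometry of the disk $D_r(p)$ contributes the weight coming from how much arc of $r\mathbb{S}^1(p)$ sits at a given signed distance from $\partial\mathcal{R}(p)$. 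Carrying out this computation produces precisely the normalizing constant $v_2 r^{3/2}$ and the limiting kernel $F(p-q)\cdot u_{\mathcal{R}(p)}(q)$: the factor $v_\beta = 2\{\int_{-1}^{1}(1-s^2)^{\beta/2}\mathrm{d}s\}^{1/\beta}$ with $\beta=2$ arises as the $L^2$-norm of the local profile of the increment weight across the tube, and the $r^{1/2}$ is the usual Gaussian scaling of a ``local'' integral against $W$ over a set of measure $\asymp r$. Since $W$ has independent scattering and $\mathcal{H}^1$ as control measure, the rescaled boundary integral is, for each fixed $p$, a centered Gaussian random variable whose variance converges to $b^2\int_{\partial\mathcal{R}(p)}(F(p-q)\cdot u_{\mathcal{R}(p)}(q))^2\,\mathcal{H}^1(\mathrm{d}q)$, i.e.\ the variance of $\mathscr{D}_W(p;b,F,\mathcal{R})$.

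To upgrade pointwise convergence to convergence of finite-dimensional distributions, and to obtain $\mathcal{F}$-stability, I would note that $W$ is independent of $\mathcal{F}$ (property 3 of $W_{\mathcal{H}^1}$), so it suffices to show joint weak convergence of the Gaussian boundary vectors at finitely many points $p_1,\dots,p_m$; this follows from the convergence of the covariance matrices (a finite collection of integrals of the type above, using that $F$ is continuous and the parallel sets at distinct points eventually interact only on $\mathcal H^1$-null overlaps) together with Gaussianity, and then independence of $W$ from $L$ turns plain weak convergence into $\mathcal{F}_L$-stable, hence $\mathcal{F}$-stable, convergence. The vorticity statement for $\mathscr{C}_r$ is identical after replacing $u(\theta)$ by $u^\perp(\theta)$, which replaces $u_{\mathcal{R}(p)}$ by $u_{\mathcal{R}(p)}^\perp$ throughout. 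The main obstacle I anticipate is the tube-coordinate analysis of the boundary term under only Lipschitz-regular boundaries: one must control the interaction between the arc-length measure on $r\mathbb{S}^1(p)$ and the (possibly irregular) metric projection onto $\partial\mathcal{R}(p)$, and show that the irregular points of $\partial\mathcal{R}$ and the curvature/second-order terms of $\partial\mathcal R(p)$ contribute negligibly at the rate $r^{3/2}$ — this is exactly where Assumption \ref{assumptionambitset1} (finiteness of the reach-type integral) and the Steiner-type formula of Appendix A are needed, and where the bulk of the technical work in Section \ref{sec:Proofs} will lie.
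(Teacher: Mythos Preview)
Your overall strategy---split $\mathscr{D}_r$ into an interior piece of order $r^2$ and a boundary piece living on $(\partial\mathcal{R}(p))_{\oplus r}$, then analyse the latter in tube coordinates via the Steiner-type formula---is exactly the paper's route (Lemma~\ref{lemmadecomposition}, Proposition~\ref{asymptoticsint}, Theorem~\ref{Tauasymptotics}). One minor difference: rather than decomposing $L=bW+\tilde L$ via L\'evy--It\^o and bounding the non-Gaussian contribution separately, the paper replaces $L$ by a pure Gaussian basis in one stroke using the characteristic-function limit $r\psi(r^{-1/2}z)\to -\tfrac12 b^2 z^2$ (Lemma~\ref{keylemmaasymptotics-1}); this holds for \emph{any} $\nu$, so it sidesteps the case analysis you allude to. Your split works too, but note that proving $\partial\mathscr{D}_r(p;X^{\tilde L})=o_{\mathbb P}(r^{3/2})$ for a general $\tilde L$ with $b=0$ (without assuming regular variation of $\nu^\pm$) already requires that same characteristic-function limit, so you are not avoiding it.

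The genuine gap is the stable-convergence step. You write that ``$W$ is independent of $\mathcal F$ (property~3 of $W_{\mathcal H^1}$)'' and that ``independence of $W$ from $L$ turns plain weak convergence into $\mathcal F_L$-stable convergence''. This conflates two different objects. The Gaussian part $W$ of $L$ in the L\'evy--It\^o decomposition is $\mathcal F_L$-measurable; it is certainly \emph{not} independent of $L$ or of $\mathcal F$. The basis $W_{\mathcal H^1}$ that is independent of $L$ is the \emph{limit}, constructed on an extension of the probability space, and its independence from $L$ is part of what stable convergence asserts---it cannot be invoked to prove it. What must actually be shown is that for every bounded Borel $A$ the pair $\bigl((v_2 r^{3/2})^{-1}\partial\mathscr{D}_r(p_j;X)_{j\le m},\,L(A)\bigr)$ converges jointly in law with asymptotically independent components. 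The paper obtains this by writing $L(A)=L(A\cap\mathcal A_{m,r})+L(A\setminus\mathcal A_{m,r})$, where $\mathcal A_{m,r}=\bigcup_j(\partial\mathcal R(p_j))_{\oplus r}$: the second summand is independent of the boundary functional by independent scattering, while the first tends to $0$ in probability because $Leb(\mathcal A_{m,r})\to 0$; Slutsky then finishes. Without this asymptotic-independence argument your proof of $\mathcal F$-stability does not go through.
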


On the other hand, as expected, when $L$ is of bounded variation
the rate of convergence for $\mathscr{C}_{r}$ and $\mathscr{D}_{r}$
are the classical ones, i.e. of order $r^{2}$. More precisely:

\begin{theorem}[{\bf "Classical" regime}]\label{divergenceandvorticitytheorem}
	Let $\mathcal{R}\subset\mathbb{R}^{2}$ be as in Assumption \ref{assumptionambitset1}
	and $X$ as in (\ref{IDXdef}). Then the following convergence holds
	as $r\downarrow0$
	\begin{align*}
	\frac{1}{\pi r^{2}}\mathscr{C}_{r}(p;X) & \stackrel{\mathbb{P}}{\rightarrow}\omega(p),\,\,\,p\in\mathbb{R}^{2};\,\,\,\frac{1}{\pi r^{2}}\mathscr{D}_{r}(p;X)\stackrel{\mathbb{P}}{\rightarrow}\sigma(p),\,\,\,p\in\mathbb{R}^{2},
	\end{align*}
	if one of the following (not-necessarily mutually exclusive) cases
	holds:
	\begin{description}
		\item [{i)}] $b=0$ and $\int_{\mathbb{R}}(1\land\left|x\right|)\nu(\mathrm{d}x)<\infty$;
		\item [{ii)}] $\left.F\right|_{-\mathcal{\partial R}}\equiv0$;
	\end{description}
	where the limiting processes are defined as
	\begin{align*}
	\omega(p) & :=\int_{\mathcal{R}+p}\nabla^{\perp}\cdot F(p-q)\tilde{L}(\mathrm{d}q),\\
	\sigma(p) & :=\int_{\mathcal{R}+p}\mathrm{\nabla}\cdot F(p-q)\tilde{L}(\mathrm{d}q).
	\end{align*}
	with $\nabla:=(\partial_{x},\partial_{y})'$, $\nabla^{\perp}:=(-\partial_{y},\partial_{x})'$,
	$\tilde{L}=L-\gamma_{d}Leb$, where $\gamma_{d}=\gamma-\int_{\left|x\right|\leq1}x\nu(\mathrm{d}x)$
	for case i) while $\gamma_{d}=\gamma$ for case ii) and iii).
	
\end{theorem}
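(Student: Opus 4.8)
The plan is to establish the decomposition in (\ref{decompositiondiv}) and its analogue for the circulation, and then exploit the assumed regularity/bounded-variation structure to control each term. First I would observe that under case (i) or (ii) the contribution of the driving basis near $\partial\mathcal{R}(p)$ is negligible at the scale $r^2$: in case (i), $\tilde L$ has a.s. bounded-variation paths (its L\'evy measure integrates $1\wedge|x|$ and the Gaussian part vanishes), so the increments of $L$ over the $r$-parallel strip $(\partial\mathcal{R}(p))_{\oplus r}$ are dominated by the drift and are $O_{\mathbb P}(\mathcal{H}^2((\partial\mathcal{R}(p))_{\oplus r}))=O_{\mathbb P}(r)$ by the Steiner-type estimate from Appendix A together with Assumption \ref{assumptionambitset1}; after dividing by $\pi r^2$ one would at first only get $O_{\mathbb P}(1/r)$, so the correct step is to rewrite $\ointop_{r\mathbb S^1(p)}H(p-q)\tilde L(\mathrm dq)$ carefully and show that, once the drift part is isolated and recombined with $E_r(p)$, the surviving stochastic boundary term is in fact $o_{\mathbb P}(r^2)$ (this is where the bounded variation is essential). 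In case (ii), $\left.F\right|_{-\partial\mathcal{R}}\equiv0$ forces the smooth kernel $H$ appearing in (\ref{decompositiondiv}) to vanish on $\partial\mathcal{R}(p)$, so $H(p-q)=O(d_{\partial\mathcal{R}(p)}(q))=O(r)$ on the strip, giving the boundary integral size $O_{\mathbb P}(r\cdot r)=o_{\mathbb P}(r^2)$ directly, for an arbitrary L\'evy basis.

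The core of the argument is the identity (\ref{decompositiondiv}) itself. I would derive it by writing $\mathscr D_r(p;X)=\ointop_{r\mathbb S^1(p)}X(s)\cdot n(s)\,\mathrm ds$, inserting the definition (\ref{IDXdef}) of $X$, and applying a stochastic Fubini theorem (justified by the convergence-of-stochastic-integrals results quoted in Appendix A) to interchange the line integral over $r\mathbb S^1(p)$ with the L\'evy-basis integral. This produces $\mathscr D_r(p;X)=\int_{\mathbb R^2} \Phi_r(p,q)\,L(\mathrm dq)$ where $\Phi_r(p,q)=\ointop_{r\mathbb S^1(p)}\mathbf 1_{\mathcal{R}+s}(q)\,F(s-q)\cdot n(s)\,\mathrm ds$. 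The support of $q\mapsto\Phi_r(p,q)$ is contained in $(\mathcal{R}(p))_{\oplus r}\setminus(\mathcal{R}(p))_{\ominus r}$ modulo the interior, and on the interior region $\{q:\mathcal{R}+s\ni q\ \forall s\in r\mathbb S^1(p)\}$ one recognizes, via the classical divergence theorem applied to the smooth field $s\mapsto F(s-q)\mathbf 1_{\mathcal{R}+s}(q)$ (smooth in $s$ once $q$ is fixed in the interior region), that $\Phi_r(p,q)=\int_{D_r(p)}\nabla_s\cdot F(s-q)\,\mathrm ds = \int_{D_r(p)}\nabla\cdot F(s-q)\,\mathrm ds$; this is the piece that becomes $E_r(p)$, and dividing by $\pi r^2$ and using continuity of $\nabla\cdot F$ together with dominated convergence for stochastic integrals gives $\frac{1}{\pi r^2}E_r(p)\overset{\mathbb P}{\to}\int_{\mathcal R+p}\nabla\cdot F(p-q)\tilde L(\mathrm dq)$, with the drift adjustment $\gamma\mapsto\gamma_d$ absorbing the compensator term $\int_{|x|\le1}x\nu(\mathrm dx)$ in case (i). The remaining piece, supported on the thin boundary strip, is the $H$-integral over $(\partial\mathcal{R}(p))_{\oplus r}$, handled as in the previous paragraph. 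The same computation with $n$ replaced by $n^\perp$ and $\nabla\cdot$ by $\nabla^\perp\cdot$ gives the circulation statement.

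For the finite-dimensional convergence (the statement is pointwise in $p$, but the limiting fields are genuine random fields) I would note that $p\mapsto\tilde L$-integrals are jointly measurable and that convergence in probability of the remainder terms is uniform over any finite set of base points $p_1,\dots,p_k$, since each estimate above is translation-equivariant by Remark \ref{remarkambitset} and the Steiner bound in Assumption \ref{assumptionambitset1} is uniform. Thus the vector $(\frac{1}{\pi r^2}\mathscr D_r(p_j;X))_j$ converges in probability, hence in distribution, to $(\sigma(p_j))_j$, and likewise for $\mathscr C_r$ and $\omega$.

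I expect the main obstacle to be the estimate of the boundary strip term $\int_{(\partial\mathcal{R}(p))_{\oplus r}}H(p-q)\,\tilde L(\mathrm dq)$ in case (i): naively it is only $O_{\mathbb P}(r)$, which is too large after normalization by $r^2$, so one must show genuine cancellation. The right device is to integrate $H$ against the drift part $\gamma_d\,Leb$ explicitly — yielding a Riemann-type integral $\gamma_d\int_{(\partial\mathcal{R}(p))_{\oplus r}}H(p-q)\,\mathrm dq$ that, combined with the co-area/Steiner structure of the parallel sets (Appendix A) and the fact that $H$ is the flux kernel of a smooth field, reassembles with $E_r(p)$ into exactly $\int_{\mathcal R+p}\nabla\cdot F(p-q)\,\mathrm dq$ up to $o(r^2)$ — while the purely stochastic part $\int_{(\partial\mathcal{R}(p))_{\oplus r}}H(p-q)(\tilde L-\gamma_d Leb)(\mathrm dq)$, driven by a zero-drift bounded-variation jump basis with no Gaussian component, has second-order-absolute-moment (or, if moments fail, a truncation argument) bounded by $\mathcal{H}^2((\partial\mathcal{R}(p))_{\oplus r})\sup|H|=O(r)$ times the jump activity, and a finer bound using $H=O(1)$ but the jump measure $\int(1\wedge|x|)\nu(\mathrm dx)<\infty$ forces this to be $o_{\mathbb P}(r^2)$ once one also uses that the strip area is $O(r)$ and exploits the extra $r$ from subdividing into $O(1/r)$ independent pieces each of size $O(r^2)$. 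Making this last bookkeeping rigorous — in the possibly-no-moments regime — via a stopping/truncation argument on the jumps of $L$ is the delicate point; everything else is stochastic Fubini plus the classical divergence theorem applied pathwise to the deterministic kernel.
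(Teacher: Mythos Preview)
Your overall architecture matches the paper's: apply stochastic Fubini to get $\mathscr{D}_r(p;X)=\int\Phi_r(p,q)L(\mathrm dq)$, split into an interior piece $\mathring{\mathscr{D}}_r$ (where the classical divergence theorem applies to the smooth kernel) and a boundary piece $\partial\mathscr{D}_r$ supported on $(\partial\mathcal{R}(p))_{\oplus r}$, and show the interior piece gives the limit while the boundary piece is $o_{\mathbb P}(r^2)$. The interior analysis is fine. The boundary analysis, however, has a genuine gap in both cases.

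\textbf{Case (ii).} You treat the boundary integrand as a fixed smooth function $H(p-q)$ and argue $H=O(d_{\partial\mathcal{R}(p)}(q))=O(r)$, hence boundary term $=O_{\mathbb P}(r\cdot r)$. Two problems. First, the integrand is not a fixed $H$; after Fubini it is the $r$-dependent quantity $\ointop_{r\mathbb{S}^1(q)}F(p-\cdot)\mathbf 1_{\mathcal{R}(p)}\cdot n\,\mathrm ds$, and when $F|_{-\partial\mathcal{R}}\equiv 0$ this is $O(r^2)$, not $O(r)$: one factor of $r$ from the arc length of $r\mathbb{S}^1(q)$, one from $F$ vanishing on the boundary (this is Lemma \ref{keylemmaasymptotics}(ii) in the paper). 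Second, your step ``$O_{\mathbb P}(r\cdot r)=o_{\mathbb P}(r^2)$'' is simply false; with only an $O(r)$ integrand and a Gaussian $L$ the boundary term would be $O_{\mathbb P}(r^{3/2})$, which blows up after division by $r^2$. The paper's argument instead uses the correct $O(r^2)$ bound together with the Musielak--Orlicz criterion (Theorem \ref{DCTstcint}): $\int_{(\partial\mathcal R)_{\oplus r}}\Phi_L^0\bigl(r^{-2}\cdot O(r^2)\bigr)\,\mathrm dq\le C\,Leb((\partial\mathcal R)_{\oplus r})\to 0$, which gives $r^{-2}\partial\mathscr{D}_r\overset{\mathbb P}{\to}0$ for \emph{any} $L$.

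\textbf{Case (i).} Your plan of isolating the drift of the boundary term and ``recombining'' it with $E_r$ is unnecessary and the bookkeeping you sketch does not close. The clean observation you are missing is that $\mathscr{D}_r(p;\cdot)$ annihilates constant fields, so $\mathscr{D}_r(p;X)=\mathscr{D}_r(p;\tilde X)$ with $\tilde X$ driven by $\tilde L=L-\gamma_d\,Leb$; see (\ref{DandCnodrift}). This removes the drift \emph{globally} at the outset, not just on the strip. After that the paper does not use moment or truncation estimates at all: it computes the characteristic function of $r^{-2}\partial\mathscr{D}_r(p;\tilde X)$ as $r^{-1}\int_{(\partial\mathcal R)_{\oplus r}} r\,\psi_{\tilde L}\bigl(r^{-1}w_r(q)\bigr)\mathrm dq$ with $w_r$ uniformly bounded (Lemma \ref{keylemmaasymptotics}), uses the Steiner formula (Theorem \ref{lemmadiffeo}) to see $r^{-1}Leb((\partial\mathcal R)_{\oplus r})=O(1)$, and invokes the scaling $r\,\psi_{\tilde L}(r^{-1}z)\to 0$ uniformly on compacts, which holds precisely because $b=0$, $\int(1\wedge|x|)\nu(\mathrm dx)<\infty$, and $\tilde L$ has zero drift in the finite-variation parametrization. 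Your truncation route can in fact be made to work once the drift is removed this way (split jumps at level $\epsilon$, use that large jumps miss the strip with probability $1-O(r)$ and small jumps contribute first moment $\le Cr^2\int_{|x|\le\epsilon}|x|\nu(\mathrm dx)$), but the ``subdivide into $O(1/r)$ independent pieces of size $O(r^2)$'' heuristic is not the mechanism, and the drift-recombination you describe is not needed.
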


There is another situation in which the classical rate appears. Before presenting this last case we introduce the limiting
fields: For $0<\beta<2$, let $(K_{+},K_{-},\beta,\hat{\gamma})$
be the parameters of a strictly $\beta$-stable r.v. (see Subsection
\ref{PreliminariesLBID}), $F$ as above and $\mathcal{R}$ as in
Assumption \ref{assumptionambitset1}. The family $\{\mathscr{C}_{\beta}(p;F,\mathcal{R}),\mathscr{D}_{\beta}(p;F,\mathcal{R})\}_{p\in\mathbb{R}^{2}}$
will denote two stationary strictly $\beta$-stable fields given for
any $p\in\mathbb{R}^{2}$ as 
\begin{align*}
\mathscr{C}_{\beta}(p;F,\mathcal{R}):=- & \int_{\partial\mathcal{R}(p)}F(p-q)\cdot u_{\mathcal{R}(p)}^{\perp}(q)M_{\mathcal{H}^{1}}^{\beta}(\mathrm{d}q);\\
\mathscr{D}_{\beta}(p;F,\mathcal{R}):=- & \int_{\partial\mathcal{R}(p)}F(p-q)\cdot u_{\mathcal{R}(p)}(q)M_{\mathcal{H}^{1}}^{\beta}(\mathrm{d}q),
\end{align*}
where $u_{\mathcal{R}(p)}$ is as in Remark \ref{remarkambitset},
and $M_{\mathcal{H}^{1}}^{\beta}$ is a separable and strictly $\beta$-stable
L\'evy basis with parameters $(K_{+},K_{-},\beta,\hat{\gamma})$ defined
on an extension of $\left(\Omega,\mathcal{F},\mathbb{P}\right)$ and
whose control measure is the 1-dimensional Hausdorff measure. Moreover,
$M_{\mathcal{H}^{1}}^{\beta}$ is independent of $L$.

\begin{theorem}[{\bf Stable attractor}]\label{stableattractor} Let
	$\mathcal{R}\subset\mathbb{R}^{2}$ be as in Assumption \ref{assumptionambitset1} and $X$ as in (\ref{IDXdef}) with $\left.F\right|_{-\mathcal{\partial R}}\neq0$,
	$b=0$ and $\int_{\mathbb{R}}(1\land\left|x\right|)\nu(\mathrm{d}x)=+\infty$.
	Suppose that there exists $1\leq\beta<2$ such that (see Subsection
	\ref{PreliminariesLBID}) $\nu^{\pm}(x)\sim\tilde{K}_{\pm}x^{-\beta}$
	as $x\downarrow0$ with $\tilde{K}_{+}+\tilde{K}_{-}>0$. Then
	\begin{description}
		\item [{i.}] If $1<\beta<2$, then by letting $K_{\pm}=\beta\tilde{K}_{\pm}$
		we have that for any $p\in\mathbb{R}^{2}$, as $r\downarrow0$
		\begin{align*}
		\frac{1}{v_{\beta}r^{1+1/\beta}}\mathscr{C}_{r}(p;X) & \overset{\mathcal{F}\text{-}d}{\longrightarrow}\mathscr{C}_{\beta}(p;F,\mathcal{R}),\,\,\,p\in\mathbb{R}^{2};\\
		\frac{1}{v_{\beta}r^{1+1/\beta}}\mathscr{D}_{r}(p;X) & \overset{\mathcal{F}\text{-}d}{\longrightarrow}\mathscr{D}_{\beta}(p;F,\mathcal{R}),\,\,\,p\in\mathbb{R}^{2},
		\end{align*}
		\item [{ii.}] If $\beta=1$ in addition assume that $\tilde{K}_{+}=\tilde{K}_{-}$
		and $\mathrm{PV}\int_{-1}^{1}x\nu(\mathrm{d}x)$, the Cauchy principal
		value\footnote{Recall that the Cauchy principal value of an integral around $0$
			is defined as the limit (in case it exists)
			\[
			\mathrm{PV}\int_{-1}^{1}f(x)\nu(\mathrm{d}x):=\lim_{a\downarrow0}\left[\int_{-1}^{-a}f(x)\nu(\mathrm{d}x)+\int_{a}^{1}f(x)\nu(\mathrm{d}x)\right].
			\]
		}, exists. Then, for any $p\in\mathbb{R}^{2}$, as $r\downarrow0$
		\begin{align*}
		\frac{1}{\pi r^{2}}\mathscr{C}_{r}(p;X) & \overset{\mathcal{F}\text{-}d}{\longrightarrow}\omega(p)+\mathscr{C}_{\beta}(p;F,\mathcal{R}),\,\,\,p\in\mathbb{R}^{2};\\
		\frac{1}{\pi r^{2}}\mathscr{D}_{r}(p;X) & \overset{\mathcal{F}\text{-}d}{\longrightarrow}\sigma(p)+\mathscr{D}_{\beta}(p;F,\mathcal{R}),\,\,\,p\in\mathbb{R}^{2}.
		\end{align*}
		where $\hat{\gamma}=\gamma-\mathrm{PV}\int_{-1}^{1}x\nu(\mathrm{d}x)$,
		and $\omega$ and $\sigma$ as in Theorem \ref{divergenceandvorticitytheorem}.
	\end{description}
\end{theorem}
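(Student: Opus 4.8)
The plan is to reduce everything to the analysis of the boundary term in the decomposition \eqref{decompositiondiv} of Subsection \ref{subsec:Some-intuitive-description}. Concretely, I would first establish a rigorous version of the splitting
\[
\mathscr{D}_{r}(p;X)=\int_{(\partial\mathcal{R}(p))_{\oplus r}}H_{r}(p-q)L(\mathrm{d}q)+E_{r}(p),
\]
and the analogous one for $\mathscr{C}_{r}$, where $H_{r}$ encodes the ``difference of $F$ across the $r$-collar of $\partial\mathcal{R}$'' and $E_{r}$ is the remainder coming from the smooth part of $F$ on $\mathring{\mathcal{R}}$. From Theorem \ref{divergenceandvorticitytheorem}(ii) type arguments (applied with the drift-compensated basis) one gets $\frac{1}{\pi r^{2}}E_{r}(p)\to\sigma(p)$ in probability (resp. $\omega(p)$). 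So the whole content of the theorem is the limit of $\frac{1}{v_{\beta}r^{1+1/\beta}}\int_{(\partial\mathcal{R}(p))_{\oplus r}}H_{r}(p-q)L(\mathrm{d}q)$ when $1<\beta<2$, and of $\frac{1}{\pi r^{2}}$ times the same integral when $\beta=1$ (where the rate degenerates and the boundary term must be merged with $\sigma,\omega$, using the principal-value centering $\hat\gamma=\gamma-\mathrm{PV}\int_{-1}^{1}x\nu(\mathrm dx)$).

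Next I would compute the cumulant of this stochastic integral using the Lévy–Khintchine representation of a homogeneous Lévy basis: for a deterministic integrand $\phi$,
\[
\mathcal{C}\!\left(z\ddagger\int\phi\,\mathrm{d}L\right)=\int_{\mathbb{R}^{2}}\psi\!\bigl(z\phi(q)\bigr)\,\mathrm{d}q .
\]
Because $F$ is $C^{1}$ on $-\mathcal{R}$, on the $r$-collar the relevant integrand behaves, to leading order, like $r\,F(p-q)\cdot u_{\mathcal{R}(p)}(q)$ (resp. $r\,F(p-q)\cdot u^{\perp}_{\mathcal{R}(p)}(q)$) times a universal profile of the transversal variable. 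Here I would invoke the Steiner-type formula for closed sets from Appendix A and the reduced-normal-bundle/reach machinery recalled in Subsection \ref{PreliminariesJCHausdorff}, together with the integrability condition in Assumption \ref{assumptionambitset1}, to change variables in $(\partial\mathcal{R}(p))_{\oplus r}$ into ``arc length along $\partial\mathcal{R}$'' times ``signed distance $t\in[-r,r]$''. The normalization $v_{\beta}=2\{\int_{-1}^{1}(1-s^{2})^{\beta/2}\mathrm ds\}^{1/\beta}$ is exactly what makes $\int_{-r}^{r}|{\rm (profile)}|^{\beta}\,\mathrm dt$ match the $\beta$-stable scaling. Using the tail asymptotics $\nu^{\pm}(x)\sim\tilde K_{\pm}x^{-\beta}$ and the elementary fact $\psi(\lambda z)\sim -\,c_{\beta}(z)\,\lambda^{\beta}$ as $\lambda\to\infty$ (for $1<\beta<2$, after the stable-domain-of-attraction reduction, with $K_{\pm}=\beta\tilde K_{\pm}$), a dominated-convergence argument in the cumulant integral yields
\[
\mathcal{C}\!\left(z\ddagger\tfrac{1}{v_{\beta}r^{1+1/\beta}}\!\int_{(\partial\mathcal{R}(p))_{\oplus r}}\!\!H_{r}(p-q)L(\mathrm{d}q)\right)\longrightarrow \int_{\partial\mathcal{R}(p)}\psi_{\beta}\!\bigl(-zF(p-q)\cdot u_{\mathcal{R}(p)}(q)\bigr)\,\mathcal{H}^{1}(\mathrm{d}q),
\]
with $\psi_{\beta}$ the exponent of the strictly $\beta$-stable seed with parameters $(K_{+},K_{-},\beta,\hat\gamma)$; this is precisely the cumulant of $\mathscr{D}_{\beta}(p;F,\mathcal{R})$ defined via the line integral against $M^{\beta}_{\mathcal H^{1}}$. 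Pointwise (one-point) convergence of cumulants gives convergence in distribution; since $M^{\beta}_{\mathcal H^{1}}$ is built on an independent extension, upgrading to $\mathcal{F}$-stable convergence follows by the standard criterion (joint convergence of $(\xi_{r},Z)$ for bounded $\mathcal{F}$-measurable $Z$) because the boundary term is asymptotically independent of any fixed $L$-measurable random variable — here one uses that the collar $(\partial\mathcal{R}(p))_{\oplus r}$ shrinks and the independently-scattered property of $L$. The case $\beta=1$ is handled the same way after re-centering: the $r^{1+1/\beta}=r^{2}$ rate coincides with the classical one, the $\mathrm{PV}$ integral is what remains finite in the drift, and the limit is the sum $\sigma(p)+\mathscr{D}_{\beta}(p;F,\mathcal{R})$ (resp. $\omega(p)+\mathscr{C}_{\beta}$), with the two summands jointly having the stated law by combining the two cumulant computations.

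The main obstacle I anticipate is making the collar change-of-variables fully rigorous for the irregular boundary allowed by Assumption \ref{assumptionambitset1}: the parallel set $(\partial\mathcal{R}(p))_{\oplus r}$ is not a clean product neighborhood near the non-Lipschitz-regular points, overlaps of normal segments must be controlled, and one needs the integrability hypothesis on $\delta_{\mathcal{R}_i},\delta_{\mathcal{R}_i^{*}}$ precisely to show that the ``bad'' part of the collar contributes $o(r^{1+1/\beta})$ (equivalently $o(r^{2})$ when $\beta=1$) to the cumulant. Establishing the uniform bound $\mathscr{C}_{r},\mathscr{D}_{r}$-collar term $=O_{\mathbb P}(r^{1+1/\beta})$, together with the remainder estimate $E_{r}=O_{\mathbb P}(r^{2})$ and $H_{r}\to$ its limit integrand in the appropriate sense, is where the technical results of Appendix B will be needed; everything else is the cumulant/domain-of-attraction computation sketched above.
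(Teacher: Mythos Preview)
Your proposal is correct and follows essentially the same route as the paper: decompose $\mathscr{D}_r$ (and $\mathscr{C}_r$) into an interior term converging at the classical rate $\pi r^2$ to $\sigma(p)$ (resp.\ $\omega(p)$) and a boundary term supported on $(\partial\mathcal{R}(p))_{\oplus r}$; analyze the boundary term via its cumulant, change variables in the collar with the Steiner-type formula and the reach-function integrability of Assumption~\ref{assumptionambitset1}, and use the domain-of-attraction asymptotics $r\psi(r^{-1/\beta}\cdot)\to\psi_\beta(\cdot)$ to identify the limiting line integral against a strictly $\beta$-stable basis with control measure $\mathcal{H}^1$; finally, upgrade to $\mathcal{F}$-stable convergence by exploiting the independently-scattered property of $L$ on the shrinking collar. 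The paper organizes this through Lemma~\ref{lemmadecomposition}, Proposition~\ref{asymptoticsint}, Lemma~\ref{keylemmaasymptotics-1}, Lemma~\ref{keylemmaasymptotics}, and Theorem~\ref{Tauasymptotics}, which are precisely the ingredients you anticipate needing from Appendix~B.
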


We proceed now to make some remarks about the previous theorems. 

\begin{remark}
	
	In Theorems \ref{gaussianattractor} and \ref{stableattractor}, the
	limiting fields possess very irregular path properties. For example,
	if $\partial\mathcal{R}$ is strictly convex, then necessarily
	for any $p_{1},p_{2}$, $\mathcal{H}^{1}(\partial\mathcal{R}(p_{1})\cap\partial\mathcal{R}(p_{2}))=0$,
	which means that the fields appearing in Theorems \ref{gaussianattractor}
	and \ref{stableattractor} are white noises. 
	
\end{remark}

\begin{remark}By the previous remark, we have that the convergence
	in Theorems \ref{gaussianattractor} and \ref{stableattractor} cannot
	in general be strengthen to functional convergence. Moreover, since
	the convergence is stable and the limit is independent of the background
	driving L\'evy basis, we deduce that the convergence cannot take place
	in probability either.
	
\end{remark}

\begin{remark}The rates of convergence for $\mathscr{C}_{r}$ and
	$\mathcal{\mathscr{D}}_{r}$ can be seen as an $L^{\beta}$ norm of
	a certain parametrization of a disk. Indeed, let $D_{r}(p)$ be a
	disk of radius $r>0$ and center $p$, and put $g(s,\rho;\beta):=2\sqrt{1-s^{2}}\frac{(1+\beta)}{2}\rho$
	for $1\leq\beta\leq2$ . Then
	\[
	\left|D_{r}(p)\right|=\int_{-r}^{r}\int_{-1}^{1}\left|g(s,\rho;1)\right|\mathrm{d}s\mathrm{d}\rho.
	\]
	Moreover
	\[
	r^{1+1/\beta}v_{\beta}=\left(\int_{-r}^{r}\int_{-1}^{1}\left|g(s,\rho;\beta)\right|^{\beta}\mathrm{d}s\mathrm{d}\rho\right)^{1/\beta},
	\]
	so $r^{1+1/\beta}v_{\beta}$ can be thought as an $L^{\beta}$ norm
	of $g$.
	
\end{remark}

\subsection{Examples}

To clarify the results and the assumptions of Theorems \ref{gaussianattractor}-\ref{stableattractor},
in this part we present several examples.

\subsubsection*{Sets with positive reach}
Let $A$ be a Jordan domain such that $\inf_{ N(\partial A)}\delta_{\partial A}>0$. Then the integrability condition in Assumption \ref{assumptionambitset1} is satisfied. Indeed, we have in particular that $\delta_{A}$ and $\delta_{A^{*}}$ are bounded from below by, let's say $\varepsilon>0$. Thus, according to Theorems \ref{lemmadiffeo} and \ref{lemmadiffeo-1}
\[ 
\int_{\mathbb{S}^{1}} \sum_{q:(q,u)\in N(A)}(\delta_{A}(q,u)\wedge 1)\mathcal{H}^{1}(\mathrm{d}u)\leq\int_{\mathbb{S}^{1}} \sum_{q:(q,u)\in N(A)}\mathbf{1}_{\delta_A (q,u)>\varepsilon} \mathcal{H}^{1}(\mathrm{d}u)<\infty.
\]
Our claim follows by replacing $A$ by $A^{*}$ in the previous equation. Sets with the property $\inf_{ N(\partial A)}\delta_{\partial A}>0$ are known as {\it sets with positive reach}, see \cite{Federer59} for more details. It was shown in \cite{RatajLud17} that simple curves have positive reach if and only if are of class $C^{1,1}$, i.e. differentiable with Lipschitz derivative. Therefore, Jordan domains with boundary of positive reach satisfy Assumption \ref{assumptionambitset1}. 

\subsubsection*{Piecewise $C^{1,1}$ curves}
Let $A$ be a Jordan domain whose boundary is piecewise $C^{1,1}$.  This class of sets have indeed Lipschitz-regular boundary. However, as we saw above, $\partial A$ cannot have positive reach. Actually if $q_0\in\partial A$ is a corner then necessarily $\delta_{\partial A}(q_0,u)=0$ for any $u\in\mathbb{S}^{1}$. Nevertheless, Assumption \ref{assumptionambitset1} remains valid in this case. To see this, for simplicity assume that there is only one corner, say $q_0\in\partial A$. We can find $\rho>0$, such that the points in $D_{\rho}(q_0)\cap \partial A$ has null-curvature, i.e. two straight lines intersecting in $q_0$. Then outside of $D_{\rho}(q_0)\cap \partial A $, $\delta_{\partial A}$ is bounded (Corollary 8.9 in  \cite{RatajLud17}) from below and there are $u_1,u_2\in \mathbb{S}^{1}$ such that 
\[\mathcal{H}^0\{q\in D_{\rho}(q_0)\cap \partial A \setminus \{q_0\}:(q,u)\in N(A)\}=0,\,\,\,u\in \mathbb{S}^{1}\setminus\{u_1,u_2\}.
\]
The integrability condition in Assumption \ref{assumptionambitset1} follows from these observations. More generally, Jordan domains with piecewise $C^{1,1}$  boundary are within Assumption \ref{assumptionambitset1}.
\subsubsection*{Stable distributions}

Let $L$ be a homogeneous L\'evy basis with characteristic triplet $(\gamma,b,\nu)$.
Assume that the L\'evy seed $L'$ has a $\beta$-stable distribution
for $0<\beta\leq2$. Thus if $\beta=2$, we have that $L'$ is a Gaussian
r.v. with mean $\gamma$ and variance $b^{2}$ meaning that $L$ satisfies
the assumptions of Theorem \ref{gaussianattractor}. On the other
hand, if $\beta<2$ then
\[
\nu^{\pm}(x)=\beta^{-1}K_{\pm}x^{-\beta},\,\,\,x>0,
\]
meaning that, as $x\downarrow0$, $\nu^{\pm}(x)\sim\beta^{-1}K_{\pm}x^{-\beta}$.
Consequently, for $1<\beta<2$, $L$ is within the framework of Theorem
\ref{stableattractor}\textbf{ i.}, while for $\beta=1$, $L$ satisfies
the assumptions of Theorem \ref{stableattractor} \textbf{ii.} if
and only if $K_{+}=K_{-}$. Furthermore, for $\beta<1$, $L$ fulfills
the requirements of Theorem \ref{divergenceandvorticitytheorem}.
\subsubsection*{Generalized Hyperbolic distributions}

The family of Generalized hyperbolic distributions, originally introduced
in \cite{BN78}, constitutes a rich class of infinitely divisible
normal mean-variance mixture distributions.A r.v. $\xi$
is said to have generalized hyperbolic distribution with parameters $\lambda,\mu\in\mathbb{R}$,
$\delta>0$ and $0\leq\left|\theta\right|<\alpha$, and we write $\xi\sim GH(\lambda,\alpha,\theta,\delta,\mu)$,
if for $u\in\mathbb{R}$
\begin{align}
\mathcal{C}(u\ddagger\xi)&=\left[\mu+\theta\delta\frac{K_{\lambda+1}(\delta\sqrt{\alpha^{2}-\theta^{2}})}{\sqrt{\alpha^{2}-\theta^{2}}K_{\lambda}(\delta\sqrt{\alpha^{2}-\theta^{2}})}\right]iu\label{characteriscGH}\\
&+\int_{\mathbb{R}\backslash\{0\}}(e^{iux}-1-iux)v_{GH(\lambda,\alpha,\theta,\delta,\mu)}(x)\mathrm{d}x,\nonumber
\end{align}
where $K_{\zeta}$ denotes the modified Bessel function of second
kind with index $\zeta$ and
\[
v_{GH(\lambda,\alpha,\theta,\delta,\mu)}(x)=\frac{e^{\theta x}}{\left|x\right|}k_{\lambda,\alpha,\delta}(\left|x\right|) \mathbf{1}_{\left|x\right|>0},
\]
with $k_{\lambda,\alpha,\delta}$ satisfying 
\begin{equation}
k_{\lambda,\alpha,\delta}(x)=\frac{\delta}{\pi}x^{-1}+o(x^{-1}),\,\,\,\text{as }x\downarrow0.\label{GHk}
\end{equation}
For a closed form of $k_{\lambda,\alpha,\delta}$ the reader may consult
\cite{Raible00}. This means that the L\'evy measure of $\xi$ satisfies that $\int_{\mathbb{R}}(1\land\left|x\right|)v_{GH(\lambda,\alpha,\theta,\delta,\mu)}(x)\mathrm{d}x=+\infty$
and $\nu^{\pm}(x)\sim\frac{\delta}{\pi}x^{-1}$ as $x\downarrow0$.
Moreover, for any $y>0$
\[
\int_{y}^{1}xv_{GH(\lambda,\alpha,\theta,\delta,\mu)}(x)\mathrm{d}x+\int_{-1}^{-y}xv_{GH(\lambda,\alpha,\theta,\delta,\mu)}(x)\mathrm{d}x=\int_{y}^{1}[e^{\theta x}-e^{-\theta x}]k_{\lambda,\alpha,\delta}(x)\mathrm{d}x,
\]
so by the Monotone Convergence Theorem and (\ref{GHk}) we have that,
as $y\downarrow0$,
\[
\int_{y}^{1}[e^{\theta x}-e^{-\theta x}]k_{\lambda,\alpha,\delta}(x)\mathrm{d}x\rightarrow\int_{0}^{1}[e^{\theta x}-e^{-\theta x}]k_{\lambda,\alpha,\delta}(x)\mathrm{d}x<\infty.
\]
Thus, $L$ satisfies the assumptions of Theorem \ref{stableattractor}.

\subsubsection*{Non-negative L\'evy bases}

Any non-negative homogeneous L\'evy basis satisfies that $b=0$, $\int_{\mathbb{R}}(1\land\left|x\right|)\nu(\mathrm{d}x)<\infty$,\linebreak
$\nu[(-\infty,0)]=0$, and $\gamma_{0}=\gamma-\int_{0}^{1}x\nu(\mathrm{d}x)\geq0$.
For a proof of this fact we refer to \cite{BNPed12}.
In this case we have that $L$ can be written as
\[
L(A)=\gamma_{0}Leb(A)+\int_{A}\int_{0}^{\infty}xN(\mathrm{d}x,\mathrm{d}q),\,\,\,A\in\mathcal{B}_{b}(\mathbb{R}^{2}),
\]
where $N$ is a Poisson random measure with intensity $\nu(\mathrm{d}x)\mathrm{d}q$.
Hence, any non-negative L\'evy basis fulfills the condition of Theorem
\ref{divergenceandvorticitytheorem}.

\subsubsection*{Isotropic kernels}

Let $R_{\phi}$ be the rotation matrix on $\mathbb{R}^{2}$ and $f$
a continuous function. Put
\[
F_{\phi,f}(q)=f(\left\Vert q\right\Vert )R_{\phi}q,\,\,\,q\in\mathbb{R}^{2}.
\]
In the next section we will show that when the ambit set is isotropic,
meaning that it can be written as 
\[
\mathcal{R}=\{q\in\mathbb{R}^{2}:h(\left\Vert q\right\Vert )\in A\},
\]
for some measurable function $h$ and $A\subset\mathbb{R}$, then
the ambit field of the form of (\ref{eq:ambitfield}) induced by $F_{\phi,f}$
and $\mathcal{R}$, has isotropic increments (see next section for
a precise definition). Now, if we let $h(x)=x$ and $A=[a,b]$, with
$0\leq a<b$, then $\mathcal{R}$ is an annulus, meaning that Assumption
\ref{assumptionambitset1} is satisfied. Moreover, if $f(a)=f(b)=0$,
then $\left.F\right|_{-\mathcal{\partial R}}\equiv0$,
or in other words, the conclusion of Theorem \ref{divergenceandvorticitytheorem}
holds for any L\'evy basis, whenever $f$ is continuously differentiable
on $[a,b]$. On the other hand, if
we put 
\[
f(x)=x^{-2},\,\,\,x>0,
\]
then $F_{\phi,f}$ is continuously differentiable on $\mathcal{R}$
if and only if $a>0$.

\section{A class of incompressible and rotational ambit fields }

The main goal of this section is to build a class of ambit fields
that have homogeneous and isotropic increments as well as being rotational
and having the property of incompressibility. Let us introduce the
formal definition of these concepts.

\begin{definition}An $\mathbb{R}^{2}$-valued random field $(Y(p))_{p\in\mathbb{R}^{2}}$
	is said to have {\it homogeneous} and {\it isotropic} increments
	if respectively, the following two conditions hold
	\begin{itemize}
		\item For any $p_{0}\in\mathbb{R}^{2}$ the field $(Y(p+p_{0})-Y(p))_{p\in\mathbb{R}^{2}}$
		is stationary;
		\item For any $p_{0}\in\mathbb{R}^{2}$ and $\theta\in\left[0,2\pi\right)$
		we have that 
		\[
		\left\{ R_{\theta}^{-1}[Y(R_{\theta}(p+p_{0}))-Y(R_{\theta}p)]\right\} _{p\in\mathbb{R}^{2}}\overset{d}{=}\left\{ [Y(p+p_{0})-Y(p)]\right\} _{p\in\mathbb{R}^{2}}.
		\]
	\end{itemize}
	Furthermore, we will say that it is {\it incompressible} if for any
	$p\in\mathbb{R}^{2}$ 
	\[
	\lim_{r\downarrow0}\frac{1}{\pi r^{2}}\ointop_{r\mathbb{S}^1(p)}Y\cdot n\mathrm{d}s\overset{\mathbb{P}}{\rightarrow}0,
	\]
	and it will be called {\it rotational} if the following limit exists and it is not constantly zero
	\[
	\mathbb{P}-\lim_{r\downarrow0}\frac{1}{\pi r^{2}}\ointop_{r\mathbb{S}^1(p)}Y\cdot n_{D}^{\perp}\mathrm{d}s.
	\]
	
\end{definition}
Now, given a real-valued continuous function $f$ and $\phi\in\left[0,2\pi\right)$,
let 
\[
F_{\phi,f}(q):=R_{\phi}qf(\left\Vert q\right\Vert )
\]
and consider the class of ambit fields than can be written as
\begin{equation}
Y_{\phi,f}(p):=\int_{\mathcal{R}+p}F_{\phi,f}(p-q)V(q)L\left(\mathrm{d}q\right),\,\,\,p\in\mathbb{R}^{2};\phi\in\left[0,2\pi\right),\label{ambitmodel}
\end{equation}
where $L$ is a homogeneous L\'evy basis with characteristic triplet
$(\gamma,b,\nu)$ and $V$ a predictable real-valued field. Moreover,
we let
\[
\mathcal{R}=\{q\in\mathbb{R}^{2}:h(\left\Vert q\right\Vert )\in A\},
\]
for some measurable function $h$ and $A\subset\mathbb{R}$, in such
a way that $\mathcal{R}$ is compact. As the following result shows,
this family of ambit fields is well defined and has isotropic and
homogeneous increments.

\begin{proposition}\label{propisotropyambit}Suppose that $V$ is
	predictable (see Appendix A) and locally bounded. Then $Y_{\phi,f}$
	as in (\ref{ambitmodel}) is well defined. If in addition we have
	that $V$ is bounded in $\mathcal{L}^{2}(\Omega,\mathcal{F},\mathbb{P})$,
	then $Y_{\phi,f}$ is continuous in probability. Finally, if $V$
	is independent of $L$ and almost surely
	\begin{equation}
	\int_{\mathcal{R}}\int_{\left|xV(q)\right|>1}\left|xV(q)\right|\nu(\mathrm{d}x)\mathrm{d}q<\infty,\label{bigjumpsVL}
	\end{equation}
	then $Y_{\phi,f}$ is stationary with finite first moment and possesses
	isotropic and homogeneous increments.
	
\end{proposition}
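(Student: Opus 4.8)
The plan is to establish the three assertions of Proposition \ref{propisotropyambit} in the order they are stated, reducing each to the integrability theory for stochastic integrals against L\'evy bases recalled in Appendix A. First I would verify well-definedness of $Y_{\phi,f}(p)$ for each fixed $p$. Since $V$ is predictable and locally bounded and $\mathcal{R}+p$ is compact, the integrand $q\mapsto F_{\phi,f}(p-q)V(q)\mathbf{1}_{\mathcal{R}+p}(q)$ is a bounded predictable field with compact support; $F_{\phi,f}$ is continuous on the compact set $-\mathcal{R}$, hence bounded there. The integrability criterion for L\'evy-basis integrals (the deterministic-kernel conditions involving the Gaussian part, the drift, and $\int(1\wedge|xf|^2)\nu(\mathrm{d}x)$, integrated in $q$) is then satisfied because we are integrating a bounded function over a set of finite Lebesgue measure; this gives existence of $Y_{\phi,f}(p)$.

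Second, for continuity in probability I would fix $p_n\to p$ and show $Y_{\phi,f}(p_n)\to Y_{\phi,f}(p)$ in probability. Writing the difference as a single stochastic integral of $[F_{\phi,f}(p_n-q)\mathbf{1}_{\mathcal{R}+p_n}(q)-F_{\phi,f}(p-q)\mathbf{1}_{\mathcal{R}+p}(q)]V(q)$ against $L$, I would invoke the $\mathcal{L}^2$-type convergence criterion for integrals against L\'evy bases (Appendix A): the integrand converges to $0$ for Lebesgue-a.e. $q$ (using continuity of $F_{\phi,f}$ and that $\partial\mathcal{R}$ has Lebesgue measure zero, so the indicator converges a.e.), and it is dominated in $\mathcal{L}^2(\Omega\times\mathbb{R}^2)$ by an integrable majorant since $V$ is $\mathcal{L}^2$-bounded, $F_{\phi,f}$ is bounded on $-\mathcal{R}$, and the supports $\mathcal{R}+p_n$ stay inside a fixed compact. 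Dominated convergence then yields convergence of the stochastic integrals, hence continuity in probability.

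Third, and this is the main point, I would prove stationarity, the finite first moment, and isotropy of increments under the extra hypotheses ($V\perp L$ and \eqref{bigjumpsVL}). The finite-moment claim follows by conditioning on $V$: given $V$, $Y_{\phi,f}(p)$ is an integral of a deterministic kernel against $L$ whose L\'evy measure has finite absolute first moment over $\{|xV(q)|>1\}$ by \eqref{bigjumpsVL}, so $\mathbb{E}[Y_{\phi,f}(p)\mid V]$ is finite a.s. and, after integrating the explicit conditional mean formula, $\mathbb{E}|Y_{\phi,f}(p)|<\infty$. For stationarity of increments I would use that $L$ is homogeneous (so $L(\cdot+p_0)\overset{d}{=}L(\cdot)$ as a process jointly with the translated $V$, provided $V$ itself has stationary law — actually one only needs the increments, so I would shift the integration variable $q\mapsto q+p_0$ and use translation invariance of $Leb$ together with $L(\mathrm{d}q+p_0)\overset{d}{=}L(\mathrm{d}q)$ and independence of $L$ from $V$, matching the definition of $\mathcal{R}+p$ being a pure translate). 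The crux is isotropy: here I must exploit the special form $F_{\phi,f}(q)=R_\phi q\, f(\|q\|)$ and the rotational symmetry $R_\theta\mathcal{R}=\mathcal{R}$ (which holds because $\mathcal{R}=\{q:h(\|q\|)\in A\}$ depends on $q$ only through $\|q\|$). Writing $R_\theta^{-1}[Y_{\phi,f}(R_\theta(p+p_0))-Y_{\phi,f}(R_\theta p)]$ and substituting $q=R_\theta q'$ in both integrals, the Jacobian is $1$, the ambit set $\mathcal{R}+R_\theta p$ maps to $\mathcal{R}+p$ (using $R_\theta\mathcal{R}=\mathcal{R}$), and the kernel transforms as $R_\theta^{-1}F_{\phi,f}(R_\theta(p-q'))=R_\theta^{-1}R_\phi R_\theta(p-q')f(\|R_\theta(p-q')\|)=R_\theta^{-1}R_\phi R_\theta (p-q')f(\|p-q'\|)$; since rotations commute, $R_\theta^{-1}R_\phi R_\theta=R_\phi$, so the kernel is exactly $F_{\phi,f}(p-q')$. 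Finally $L(R_\theta\,\mathrm{d}q')\overset{d}{=}L(\mathrm{d}q')$ because a homogeneous L\'evy basis is rotation-invariant in law (its control measure is $Leb$ and its L\'evy seed is unchanged), and $V(R_\theta q')\overset{d}{=}V(q')$ must be assumed or follows from whatever stationarity/isotropy of $V$ is implicit; jointly with $V\perp L$ this gives equality in distribution of the whole field, which is the isotropy of increments. The main obstacle is bookkeeping the joint law of $(V,L)$ under simultaneous rotation of space while keeping the independence and the predictability structure intact, and making sure the change of variables is legitimate at the level of stochastic integrals rather than just formally — this is handled by the approximation/convergence results for L\'evy-basis integrals in Appendix A applied to simple integrands first and then passed to the limit.
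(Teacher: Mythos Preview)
Your proposal is correct and follows essentially the same route as the paper: well-definedness via the $\Phi_L^0$ integrability criterion together with local boundedness of $V$ and compactness of $\mathcal{R}$; continuity in probability by dominated convergence in the integrability modular; the first moment by conditioning on $V$ and using \eqref{bigjumpsVL}; and isotropy via the change of variables $q\mapsto R_\theta q$ combined with the kernel identity $R_\theta^{-1}F_{\phi,f}(R_\theta\,\cdot)=F_{\phi,f}(\cdot)$ (which, as you note, comes from commutativity of planar rotations and the radial form of $f$ and $\mathcal{R}$).

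The one technical difference worth noting is that the paper carries out the isotropy step at the level of conditional characteristic functions rather than by manipulating the stochastic integrals directly: conditioning on $V$, it writes
\[
\mathbb{E}\exp\{i\langle z,Y_{\phi,f}(\vec p,p_0)\rangle\}
=\mathbb{E}\exp\Big\{\int_{\mathbb{R}^2}\mathcal{C}\big[\langle z,F_{\phi,f}(\vec p,p_0,q)\rangle V(q)\ddagger L'\big]\,\mathrm{d}q\Big\},
\]
and then performs the substitution $q=R_\theta q'$ inside the ordinary Lebesgue integral. This sidesteps the issue you identify as the ``main obstacle'' (justifying the change of variables at the level of the joint $(V,L)$ law for stochastic integrals), since everything reduces to Lebesgue calculus once $V$ is frozen. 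You are also right to flag that the argument tacitly needs $V(R_\theta\,\cdot)\overset{d}{=}V(\cdot)$ for isotropy and stationarity of $V$ for the stationarity claim; the paper's proof invokes precisely these properties without listing them among the hypotheses of the proposition.
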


\begin{proof}From Proposition \ref{convstchintambittype} and its subsequent remark in Appendix A,
	we have that $Y_{\phi,f}$ is well defined if and only if a.s.
	\[
	\int_{\mathcal{R}}\Phi_{L}^{0}[\left\Vert F_{\phi,f}(-q)\right\Vert \left|V(p+q)\right|]\mathrm{d}q<\infty.
	\]
	Since $V$ is locally bounded, we have that almost surely $\left\Vert F_{\phi,f}(q)\right\Vert \left|V(p-q)\right|\leq M_{p}$
	for any $q\in\mathcal{R}$, for some r.v. $M_{p}$ only depending
	on $p$. Hence, by Lemma 2.1.5 in \cite{Rosinski07}, we have 
	\[
	\int_{\mathcal{R}}\Phi_{L}^{0}[\left\Vert F_{\phi,f}(q)\right\Vert \left|V(p-q)\right|]\mathrm{d}q\leq 2\Phi_{L}^{0}(M_{p})Leb(\mathcal{R})<\infty,
	\]
	showing this the well definiteness of $Y_{\phi,f}$. Now, if $V$
	is bounded on $\mathcal{L}^{2}(\Omega,\mathcal{F},\mathbb{P})$, the stochastic continuity of $Y_{\phi,f}$ can be shown in a similar way as in the proof of Lemma \ref{lemmadecomposition} below.
	
	In general, $Y_{\phi,f}$ is stationary whenever $V$ is stationary
	and independent of $L$, so it has homogeneous increments in this
	situation. On the other hand if $V$ is independent of $L$, and for $p_{0},p_{1},\ldots,p_{n}\in\mathbb{R}^{n}$,
	we let $Y_{\phi,f}(\overrightarrow{p},p_{0}):=\{Y(p_{i}+p_{0})-Y(p_{i})\}_{i=1}^{n}$, then
	\[
	\mathbb{E}(exp\{i\left\langle z,Y_{\phi,f}(\overrightarrow{p},p_{0})\right\rangle \})=\mathbb{E}exp\left(\left\{ \int_{\mathbb{R}^{2}}\mathcal{C}[\left\langle z,F_{\phi,f}(\overrightarrow{p},p_{0},q)\right\rangle V(q)]\mathrm{d}q\right\} \right),
	\]
	where 
	\[
	F_{\phi,f}(\overrightarrow{p},p_{0},q)=\{F_{\phi,f}(p_{i}+p_{0}-q)\mathbf{1}_{\mathcal{R}+p_{i}+p_{0}}(q)-F_{\phi,f}(p_{i}-q)\mathbf{1}_{\mathcal{R}+p_{i}}(q)\}_{i=1}^{n}.
	\]
	Hence, if $\theta\in\left[0,2\pi\right)$ then 
	\[
	\int_{\mathbb{R}^{2}}\mathcal{C}[\left\langle z,R_{\theta}^{-1}F_{\phi,f}(R_{\theta}\overrightarrow{p},p_{0},q)\right\rangle V(q)]\mathrm{d}q=\int_{\mathbb{R}^{2}}\mathcal{C}[\left\langle z,R_{\theta}^{-1}F_{\phi,f}(R_{\theta}\overrightarrow{p},p_{0},R_{\theta}q)\right\rangle V(R_{\theta}q)]\mathrm{d}q,
	\]
	where we have done the change of variable $q=R_{\theta}q'$. But in
	view of
	\[
	R_{\theta}^{-1}F_{\phi,f}(R_{\theta}\overrightarrow{p},p_{0},R_{\theta}q)=F_{\phi,f}(\overrightarrow{p},p_{0},q),
	\]
	we get that
	\begin{align*}
	\mathbb{E}(exp\{i\left\langle z,R_{\theta}^{-1}Y_{\phi,f}(R_{\theta}\overrightarrow{p},p_{0})\right\rangle \})&=\mathbb{E}(exp\{\int_{\mathbb{R}^{2}}\mathcal{C}[\left\langle z,F_{\phi,f}(\overrightarrow{p},p_{0},q)\right\rangle V(R_{\theta}q)]\mathrm{d}q\})\\
	&=\mathbb{E}(exp\{i\left\langle z,Y_{\phi,f}(\overrightarrow{p},p_{0})\right\rangle \}),
	\end{align*}
	which shows that $Y_{\phi,f}$ has isotropic increments. Finally,
	assume that (\ref{bigjumpsVL}) holds. Since $V$ is independent of
	$L$, then, conditioned on $V$, $Y_{\phi,f}(0)$ is an ID random variable. Suppose for a moment that almost surely $\int_{\left\Vert y\right\Vert >1}\left\Vert y\right\Vert \nu_{Y_{\phi,f}}(\mathrm{d}y)<\infty$.
	Then $\mathbb{E}(\left\Vert Y_{\phi,f}(0)\right\Vert \left|V\right.)<\infty$
	a.s. and (see Example 25.12 in \cite{Sato99})
	\[  
	\mathbb{E}(Y_{\phi,f}(0)\left|V\right)=\gamma\int_{\mathcal{R}}F_{\phi,f}(-q)V(q)\mathrm{d}q+\int_{\mathcal{R}}\int_{\left|x\right|>1}xF_{\phi,f}(-q)V(q)\nu(\mathrm{d}x)\mathrm{d}q\mathrm{d}s,\]
	which would imply immediately the existence of $\mathbb{E}(Y_{\phi,f}(0))$.
	Therefore, it only remains to show that almost surely $\int_{\left\Vert y\right\Vert >1}\left\Vert y\right\Vert \nu_{Y_{\phi,f}}(\mathrm{d}y)<\infty$.
	Observe that (\ref{bigjumpsVL}) together with the stationarity and
	the local boundedness of $V$, implies that there is $\Omega_{0}\in\mathcal{F}$
	with $\mathbb{P}(\Omega_{0})=1$, such that for any $\omega\in\Omega_{0}$,
	$\int_{\left|xV(q)(\omega)\right|>1}\left|x\right|\nu(\mathrm{d}x)<\infty$
	for almost all $q\in\mathcal{R}$, and $0<\sup_{\mathcal{R}}\left|V(q)\right|(\omega)\leq M(\omega)<\infty$
	for some $M(\omega)>0$. Thus, for $\omega\in\Omega_{0}$
	\[int_{\left|y\right|>1}\left|y\right|\nu_{Y_{\phi,f}}(\mathrm{d}y)(\omega)  \leq M(\omega)M_{F}Leb(\mathcal{R})\int_{\left|xM(\omega)M_{F}\right|>1}\left|x\right|\nu(\mathrm{d}x)<\infty,  \]
	where $0<M_{F}=\sup_{\mathcal{R}}\left\Vert F_{\phi,f}(-q)\right\Vert <\infty$. This concludes the proof. \end{proof}

\begin{remark}\label{remarkgenambitfieldsproperties}Observe that,
	besides the isotropy of the increments, all the stated conclusions
	of the previous proposition are valid for the class of ambit fields
	of the form of 
	\begin{equation}
	Y(p)=\int_{\mathcal{R}+p}F(p-q)V(q)L(\mathrm{d}q),\label{ambitfieldform}
	\end{equation}
	where $\mathcal{R}$ is compact and $F$ continuously differentiable.
	
\end{remark}

Next, we proceed to characterize incompressible fields of the form
of (\ref{ambitmodel}) under the framework of Assumption \ref{assumptionambitset1}.
This is done by extending the results of Theorem \ref{divergenceandvorticitytheorem}
to the context of ambit fields. The proof of the following result
will be presented in the Section \ref{sec:Proofs}.

\begin{theorem}\label{thmdivvortambit}Let $V$ satisfy the whole asumptions of the previous propositions and independent of $L$. If $Y$ is as in (\ref{ambitfieldform}), then Theorem \ref{divergenceandvorticitytheorem}
	remains valid when we replace $L(\mathrm{d}q)$ by $V(q)L(\mathrm{d}q)$ and $\tilde{L}$ by $L$.
	
\end{theorem}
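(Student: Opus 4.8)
The plan is to reduce Theorem \ref{thmdivvortambit} to the already-proven Theorem \ref{divergenceandvorticitytheorem} by conditioning on the field $V$ and exploiting its independence from $L$. First I would fix $p_{1},\dots,p_{m}\in\mathbb{R}^{2}$ and work on the regular conditional probability given $\mathcal{F}_{V}$, the $\sigma$-field generated by $V$. Since $V$ is independent of $L$, locally bounded, and satisfies the big-jumps condition (\ref{bigjumpsVL}) (so that, as in the proof of Proposition \ref{propisotropyambit}, on a set $\Omega_{0}$ of full probability the functional $q\mapsto F(p-q)V(q)$ is bounded with $\int_{|xV(q)|>1}|x|\,\nu(\mathrm{d}x)<\infty$ for a.e.\ $q\in\mathcal{R}$), conditioned on $V(\omega)$ for $\omega\in\Omega_{0}$ the object
\[
Y_{\omega}(p):=\int_{\mathcal{R}+p}F(p-q)V(q)(\omega)\,L(\mathrm{d}q)
\]
is precisely a stationary ID field of the form (\ref{IDXdef}) but with the \emph{deterministic} kernel $\tilde F_{\omega}(p,q):=F(p-q)V(q)(\omega)$. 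The only subtlety is that $\tilde F_{\omega}$ is no longer translation-invariant in the naive sense and is only as smooth as $V$; but the conclusion of Theorem \ref{divergenceandvorticitytheorem} in the two regimes i) ($b=0$, $\int(1\wedge|x|)\nu<\infty$) and ii) ($\left.F\right|_{-\partial\mathcal{R}}\equiv0$) does not actually use smoothness of the kernel \emph{inside} $\mathcal{R}$ in an essential way — in regime i) the limit comes from the bounded-variation decomposition $L=\gamma_d\,Leb+\tilde L$ and a dominated-convergence argument on $\mathscr{D}_r,\mathscr{C}_r$, and in regime ii) the boundary term vanishes identically. So I would first re-examine the proof of Theorem \ref{divergenceandvorticitytheorem} (in Section \ref{sec:Proofs}) and isolate exactly the hypotheses on the kernel that it uses, then verify they hold for $\tilde F_{\omega}$ under the stated assumptions on $V$ (continuity/differentiability of $F$, local boundedness of $V$, predictability).

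Next I would carry out the conditional limit. In regime i), write $L=\gamma_{d}\,Leb+\tilde L$ with $\tilde L$ a martingale-type (zero-drift) L\'evy basis of bounded variation; then
\[
\frac{1}{\pi r^{2}}\mathscr{D}_r(p;Y)=\frac{1}{\pi r^{2}}\ointop_{r\mathbb{S}^1(p)}\!\Big(\int_{\mathcal{R}+s}F(s-q)V(q)L(\mathrm{d}q)\Big)\cdot n(s)\,\mathrm{d}s,
\]
and a Fubini-type interchange (justified by the stochastic-integration machinery of Appendix A and Proposition \ref{convstchintambittype}) together with the classical Stokes/divergence identity applied pathwise to the absolutely continuous part, plus the pathwise absolute continuity of the relevant $\tilde L$-integral (as in (\ref{limitinterior})), gives convergence in probability to $\int_{\mathcal{R}+p}\nabla\cdot(F(p-q)V(q))\,L(\mathrm{d}q)$ — but since $V$ does not depend on the integration-point-differentiation here (the gradient is in $p$), this equals $\int_{\mathcal{R}+p}(\nabla\cdot F)(p-q)\,V(q)\,L(\mathrm{d}q)=\sigma(p)$ with $L$ in place of $\tilde L$, because the extra boundary flux of the drift part $\gamma_d\,Leb$ through $r\mathbb{S}^1(p)$ is exactly $\gamma_d\cdot(\text{divergence of }F\cdot V)$ integrated over the disk, which recombines; this is precisely the "$\tilde L\to L$" replacement the statement announces. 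Regime ii) is immediate from (\ref{decompositiondiv}): the boundary integral $\int_{(\partial\mathcal{R}(p))_{\oplus r}}H\,L(\mathrm{d}q)$ carries a factor $\left.F\right|_{-\partial\mathcal{R}}\equiv0$ and drops out, leaving only $E_r(p)/|D_r(p)|\to\sigma(p)$ (resp.\ $\omega(p)$). The circulation $\mathscr{C}_r$ is handled identically with $n^{\perp}$ in place of $n$ and $\nabla^{\perp}$ in place of $\nabla$.

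Finally I would lift the conditional convergence to unconditional convergence in probability. Since convergence in probability of $\pi^{-1}r^{-2}\mathscr{D}_r(p_j;Y)$ to $\sigma(p_j)$ holds under $\mathbb{P}(\cdot\mid \mathcal{F}_V)(\omega)$ for $\mathbb{P}$-a.e.\ $\omega$, and the limit $\sigma(p_j)=\int_{\mathcal{R}+p_j}(\nabla^{\perp}\!/\nabla)\cdot F(p_j-q)V(q)L(\mathrm{d}q)$ is $\mathcal{F}$-measurable, the bounded convergence theorem applied to $\mathbb{E}\big[1\wedge|\pi^{-1}r^{-2}\mathscr{D}_r(p_j;Y)-\sigma(p_j)|\,\big|\,\mathcal{F}_V\big]\to0$ and then to its $\mathbb{P}$-expectation yields $\pi^{-1}r^{-2}\mathscr{D}_r(p_j;Y)\overset{\mathbb{P}}{\to}\sigma(p_j)$; joint convergence over finitely many $p_j$ follows since convergence in probability is automatically joint. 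I expect the main obstacle to be the rigorous justification of the conditional-on-$V$ reduction together with the Fubini interchange between the line integral over $r\mathbb{S}^1(p)$ and the stochastic integral with respect to $V(q)L(\mathrm{d}q)$: one must check that $\tilde F_{\omega}$ lands in the admissible class of Proposition \ref{convstchintambittype}, that the decomposition (\ref{decompositiondiv}) and the estimate $E_r/|D_r|\to\sigma$ survive verbatim with the (less smooth, non-stationary) kernel $F(\cdot)V(\cdot)(\omega)$ — here local boundedness of $V$ and continuous differentiability of $F$ are used to control the interior term — and that the exceptional $\mathbb{P}$-null set where things fail can be chosen uniformly in $r$ along a countable dense sequence and then removed by monotonicity. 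Everything else is a transcription of the proof of Theorem \ref{divergenceandvorticitytheorem}.
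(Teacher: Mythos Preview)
Your conditioning-on-$V$ idea is exactly the right instinct, and the paper shares it. But you are using it in a more laborious way than necessary, and a couple of the sub-arguments are off.

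The paper's route is this: condition on $V$ \emph{only} to justify the Fubini interchange (Lemma \ref{Fubinilemma} applied with the deterministic integrand $F(\varphi(\theta)-q)V(q)(\omega)$), which yields the decomposition $\mathscr{D}_r(p;Y)=\mathring{\mathscr{D}}_r(p;Y)+\partial\mathscr{D}_r(p;Y)$ in the same form as Lemma \ref{lemmadecomposition}, now with $V(q)L(\mathrm{d}q)$ in place of $L(\mathrm{d}q)$. Then the convergence is obtained \emph{unconditionally} and in one stroke by Proposition \ref{convstchintambittype}: the proofs of Proposition \ref{asymptoticsint}, Corollary \ref{corFvanishboundary} and Theorem \ref{Tauasymptotics}(2) all proceed by showing $\int\Phi_L^0(h_r(q))\,\mathrm{d}q\to 0$ for suitable \emph{deterministic} $h_r$, and Proposition \ref{convstchintambittype} says precisely that this implies $\int h_r(q)V(q)L(\mathrm{d}q)\overset{\mathbb{P}}{\to}0$ whenever $V$ is predictable and bounded in $\mathcal{L}^2$. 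No re-examination of Theorem \ref{divergenceandvorticitytheorem} for a non-convolution kernel is needed, and no lifting from conditional to unconditional convergence is required.

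By contrast, you try to rerun the entire proof of Theorem \ref{divergenceandvorticitytheorem} for the frozen kernel $\tilde F_\omega(p,q)=F(p-q)V(q)(\omega)$. Two specific issues: (a) your appeal to ``pathwise absolute continuity of the relevant $\tilde L$-integral (as in (\ref{limitinterior}))'' is a one-dimensional heuristic that the paper never uses in the 2D proofs; the actual argument for $\mathring{\mathscr{D}}_r$ is Proposition \ref{asymptoticsint}, which goes through $\Phi_L^0$ and dominated convergence, not pathwise regularity. (b) You invoke Proposition \ref{convstchintambittype} to justify the Fubini step, but that proposition is a convergence-transfer result, not a Fubini theorem; the Fubini is Lemma \ref{Fubinilemma}. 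Getting these roles straight collapses most of the work you anticipate: the ``main obstacle'' you describe (checking that $\tilde F_\omega$ lands in the right class, that the decomposition survives, uniformity of null sets in $r$) simply does not arise once you use Proposition \ref{convstchintambittype} as the paper does. Your handling of the drift and the $\tilde L\to L$ replacement is morally right (and matches the paper's Remark after the proof: with non-constant $V$ the deterministic part of $L$ no longer produces zero flux), but the cleaner explanation is just that the deterministic-kernel identity (\ref{DandCnodrift}) fails when $V$ is present, so one never subtracts $\gamma_d\,Leb$ in the first place.
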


An application of this result gives us that:

\begin{proposition} Let $V$ and $L$ be as in
	the previous propositionLet $Y_{\phi,f}$ be as in (\ref{ambitmodel}),
	$\mathcal{R}$ fulfilling Assumption \ref{assumptionambitset1} and
	$F_{\phi,f}$ being continuously differentiable in $\mathcal{R}$.
	Then we have the following:
	\begin{enumerate}
		\item $Y_{\phi,f}$ is incompressible if either $\text{\ensuremath{\phi}=}\frac{\pi}{2},\frac{3\pi}{2}$
		and $f$ arbitrary, or $\phi\neq\frac{\pi}{2},\frac{3\pi}{2}$ and
		for some constant $K\in\mathbb{R}$
		\[
		f(x)=Kx^{-2},\,\,\,x>0.
		\]
		\item $Y_{\phi,f}$ is irrotational if either $\text{\ensuremath{\phi}=}0,\pi$
		and $f$ arbitrary, or $\phi\neq0,\pi$ and for some constant $K\in\mathbb{R}$
		\[
		f(x)=Kx^{2},\,\,\,x>0.
		\]
	\end{enumerate}
\end{proposition}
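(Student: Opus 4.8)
The plan is to apply Theorem \ref{thmdivvortambit} directly, using the explicit form of $F_{\phi,f}$ and the limiting fields $\omega,\sigma$ written there. Since $\mathcal{R}=\{q:h(\|q\|)\in A\}$ is isotropic and in particular (with $h(x)=x$, $A=[a,b]$ — or more generally under Assumption \ref{assumptionambitset1}) a finite union of annuli, $\partial\mathcal{R}$ consists of circles centered at $0$, and on each such circle $r\mathbb{S}^{1}$ the outward normal at $q$ is $\pm q/\|q\|$. So the boundary integrals defining $\omega(p)$ and $\sigma(p)$ reduce to evaluating $F_{\phi,f}(p-q)\cdot u_{\mathcal{R}(p)}(q)$ and $F_{\phi,f}(p-q)\cdot u_{\mathcal{R}(p)}^{\perp}(q)$. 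The cleaner route, however, is to observe that by Theorem \ref{thmdivvortambit} it suffices to check when $\nabla\cdot F_{\phi,f}\equiv 0$ on $-\mathcal{R}$ (for incompressibility) and when $\nabla^{\perp}\cdot F_{\phi,f}\equiv 0$ on $-\mathcal{R}$ (for irrotationality): if the divergence (resp.\ scalar curl) of the kernel vanishes identically on the ambit set, then $\sigma(p)\equiv 0$ (resp.\ $\omega(p)\equiv 0$) for every $p$, and conversely if it does not vanish the limit field is not identically zero. Hence the proposition becomes a pointwise computation of $\nabla\cdot F_{\phi,f}$ and $\nabla^{\perp}\cdot F_{\phi,f}$.

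The main computation is therefore: write $F_{\phi,f}(q)=f(\|q\|)R_{\phi}q$ with $R_{\phi}=\begin{pmatrix}\cos\phi&-\sin\phi\\\sin\phi&\cos\phi\end{pmatrix}$. With $q=(x,y)$, $\rho=\|q\|$, one has $R_{\phi}q=(x\cos\phi-y\sin\phi,\;x\sin\phi+y\cos\phi)$, so
\[
F_{\phi,f}(q)=\bigl(f(\rho)(x\cos\phi-y\sin\phi),\ f(\rho)(x\sin\phi+y\cos\phi)\bigr).
\]
Differentiating, using $\partial_x\rho=x/\rho$, $\partial_y\rho=y/\rho$, and collecting terms,
\[
\nabla\cdot F_{\phi,f}(q)=\cos\phi\,\bigl(\rho f'(\rho)+2f(\rho)\bigr),\qquad
\nabla^{\perp}\cdot F_{\phi,f}(q)=\sin\phi\,\bigl(\rho f'(\rho)+2f(\rho)\bigr).
\]
(The first follows because the $\sin\phi$-contributions $-\partial_x(f(\rho)y)+\partial_y(f(\rho)x)$ cancel, leaving $\cos\phi\,[\partial_x(f(\rho)x)+\partial_y(f(\rho)y)]=\cos\phi\,(\rho f'+2f)$; the second is the analogous identity with the roles swapped, i.e.\ $\nabla^{\perp}\cdot F=\partial_x F^{(2)}-\partial_y F^{(1)}$.) Thus $\nabla\cdot F_{\phi,f}\equiv 0$ on $-\mathcal{R}$ if and only if $\cos\phi=0$ (i.e.\ $\phi=\tfrac{\pi}{2},\tfrac{3\pi}{2}$) or $\rho f'(\rho)+2f(\rho)=0$ for all relevant $\rho>0$; the latter ODE has general solution $f(\rho)=K\rho^{-2}$. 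Similarly $\nabla^{\perp}\cdot F_{\phi,f}\equiv 0$ iff $\sin\phi=0$ (i.e.\ $\phi=0,\pi$) or $\rho f'(\rho)+2f(\rho)=0$, whose solution on $\rho>0$ is — wait, here the statement claims $f(x)=Kx^{2}$; I should double-check the sign/ODE for the irrotational case rather than assume symmetry, since the claimed answer $Kx^{2}$ is \emph{not} the solution of $\rho f'+2f=0$.

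The resolution is that $\nabla^{\perp}\cdot F$ must be recomputed honestly rather than by analogy: with $F^{(1)}=f(\rho)(x\cos\phi-y\sin\phi)$ and $F^{(2)}=f(\rho)(x\sin\phi+y\cos\phi)$,
\[
\nabla^{\perp}\cdot F_{\phi,f}=\partial_x F^{(2)}-\partial_y F^{(1)}
=\sin\phi\,\bigl(f(\rho)+\tfrac{x^{2}}{\rho}f'(\rho)\bigr)+\cos\phi\,\tfrac{xy}{\rho}f'(\rho)
 -\Bigl(-\sin\phi\,\bigl(f(\rho)+\tfrac{y^{2}}{\rho}f'(\rho)\bigr)+\cos\phi\,\tfrac{xy}{\rho}f'(\rho)\Bigr),
\]
and the $\cos\phi$ terms cancel, giving $\nabla^{\perp}\cdot F_{\phi,f}(q)=\sin\phi\,\bigl(2f(\rho)+\rho f'(\rho)\bigr)$ — so it really is the same factor, and the irrotationality condition for $\phi\neq 0,\pi$ is again $2f+\rho f'=0$, i.e.\ $f(x)=Kx^{-2}$, contradicting the printed $Kx^{2}$. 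I would therefore carry out the computation with full care (this is the genuine obstacle: getting the divergence/curl of the isotropic-rotated kernel exactly right, including the possibility that ``irrotational'' in the paper means $\nabla^{\perp}\cdot F$ constant, not zero, or that $F_{\phi,f}$ in the last section is normalized differently than in the Examples section), reconcile the ODE solution with the claimed $Kx^{2}$, and only then invoke Theorem \ref{thmdivvortambit} to convert the vanishing of the kernel's divergence/curl into the vanishing of the $\mathbb{P}$-limits $\sigma(p),\omega(p)$, finishing the proof. The steps, in order: (i) expand $F_{\phi,f}$ in coordinates; (ii) compute $\nabla\cdot F_{\phi,f}$ and $\nabla^{\perp}\cdot F_{\phi,f}$; (iii) solve the first-order ODE $\rho f'(\rho)+2f(\rho)=0$ on $\rho>0$ and match constants; (iv) split into cases on $\phi$ and quote Theorem \ref{thmdivvortambit} with $\tilde L$ replaced by $L$ to read off $\sigma\equiv 0$ resp.\ $\omega\equiv 0$, and note that when the relevant differential operator does not annihilate $F_{\phi,f}$ the limit field is a nondegenerate stochastic integral and hence not constantly zero.
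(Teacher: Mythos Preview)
Your approach is exactly the paper's: invoke Theorem \ref{thmdivvortambit}, compute $\nabla\cdot F_{\phi,f}$ and $\nabla^{\perp}\cdot F_{\phi,f}$ explicitly, and read off the conditions under which each vanishes. The paper writes
\[
\nabla\cdot F_{\phi,f}(q)=\cos\phi\bigl(2f(\|q\|)+\|q\|f'(\|q\|)\bigr),\qquad
\nabla^{\perp}\cdot F_{\phi,f}(q)=\sin\phi\bigl(2f(\|q\|)-\|q\|f'(\|q\|)\bigr),
\]
and from the second identity derives $f(x)=Kx^{2}$ for irrotationality.

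Your suspicion is well founded: the minus sign in the paper's expression for $\nabla^{\perp}\cdot F_{\phi,f}$ is a slip. With the paper's own convention $\nabla^{\perp}=(-\partial_{y},\partial_{x})'$ and $R_{\phi}$ the standard rotation, the honest computation (which you carried out) gives
\[
\nabla^{\perp}\cdot F_{\phi,f}(q)=\sin\phi\bigl(2f(\|q\|)+\|q\|f'(\|q\|)\bigr),
\]
with the \emph{same} radial factor as the divergence. Hence for $\phi\neq 0,\pi$ the irrotationality ODE is $2f+\rho f'=0$, whose solution is $f(x)=Kx^{-2}$, not $Kx^{2}$. So your computation is correct and the statement as printed (together with the paper's curl formula) contains an error in part 2; there is nothing to ``reconcile''. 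Your steps (i)--(iv) are precisely what the paper does, and the only remaining point is the one you already note at the end: when the relevant factor does not vanish, the limiting stochastic integral is nondegenerate, so the field is genuinely rotational (resp.\ compressible).
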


\begin{proof}From Theorem \ref{thmdivvortambit}, we have that
	
	\begin{align*}
	\frac{1}{\pi r^{2}}\ointop_{r\mathbb{S}^1(p)}Y_{\phi,f}\cdot n\mathrm{d}s\overset{\mathbb{P}}{\rightarrow}\int_{\mathcal{R}+p}\nabla\cdot F_{\phi,f}(p-q)V(q)L(\mathrm{d}q);\\
	\frac{1}{\pi r^{2}}\ointop_{r\mathbb{S}^1(p)}Y\cdot n_{D}^{\perp}\mathrm{d}s\overset{\mathbb{P}}{\rightarrow}\int_{\mathcal{R}+p}\nabla^{\perp}\cdot F_{\phi,f}(p-q)V(q)L(\mathrm{d}q).
	\end{align*}
	Therefore, $Y_{\phi,f}$ is incompressible if and only if the divergence
	of $F_{\phi,f}$ vanishes. In an similar way we see that $Y_{\phi,f}$
	is irrotational if and only if the
	curl of $F_{\phi,f}$ vanishes. The conclusions of this proposition
	follow then from the previous observations and the fact that
	\begin{align*}
	\nabla\cdot F_{\phi,f}(q) & =2\cos(\phi)f(\left\Vert q\right\Vert )+\cos(\phi)f'(\left\Vert q\right\Vert )\left\Vert q\right\Vert ;\\
	\nabla^{\perp}\cdot F_{\phi,f}(q) & =2\sin(\phi)f(\left\Vert q\right\Vert )-\sin(\phi)f'(\left\Vert q\right\Vert )\left\Vert q\right\Vert .
	\end{align*}
\end{proof} 

\begin{remark}The previous proposition together with Proposition
	\ref{propisotropyambit}, show that it is always possible to construct
	an ambit field that has homogeneous and isotropic increments as well
	as being incompressible and rotational. Then a natural question arises:
	Is there another class of kernels different from $F_{\phi,f}$ having
	these properties?
	
\end{remark} 

\section{Proofs \label{sec:Proofs}}

In this part we present the proofs of our main results. We will proceed
as follow: First, we establish some preliminary results on the existence
and the representation of the functionals $\mathscr{D}_{r}(\cdot;X)$
and $\mathscr{C}_{r}(\cdot;X)$ when $X$ is of the form (\ref{IDXdef}).
We use such a representation to formalize the decomposition (\ref{decompositiondiv})
discussed in Subsection \ref{subsec:Some-intuitive-description}.
We apply this to identify the part of $\mathscr{D}_{r}(\cdot;X)$
and $\mathscr{C}_{r}(\cdot;X)$ that dominates the asymptotics. We
then invoke Assumption \ref{assumptionambitset1} to fully describe
the asymptotic rates of such a dominating part. The conclusions of
Theorems \ref{gaussianattractor}-\ref{stableattractor} will follow
from this. The proof of Theorem \ref{thmdivvortambit} will basically
be a consequence of Theorem \ref{divergenceandvorticitytheorem} and
Proposition \ref{convstchintambittype} in Appendix A. Furthermore, since our reasoning is independent to whether we use $n$ or $n^{\perp}$, we will only present the proof of those results concerned to $\mathscr{D}_{r}(\cdot;X)$.

\subsection{Preliminary results and remarks}

In what follows $L$ will denote a real-valued homogeneous L\'evy basis
on $\mathbb{R}^{d}$ with characteristic triplet $(\gamma,b,\nu)$
and $X$ as in (\ref{IDXdef}). Let us start by a very simple observation:
$\mathscr{D}_{r}(\cdot;X)$ vanishes
when $L$ is deterministic. Indeed, suppose that $L(A)=\gamma Leb(A)$
for some $\gamma\in\mathbb{R}$. Then for any $p\in\mathbb{R}^{2},$
$X(p)\equiv X(0)\equiv\gamma\int_{\mathcal{R}}F(-q)\mathrm{d}q,$
meaning this that 
\[ \mathscr{D}_{r}(p;X)=  r\int_{0}^{2\pi}X(0)\cdot u(\theta)\mathrm{d}\theta=0, \]
as claimed. In fact, a deterministic homogeneous field is necessarily
constant. Hence, we can conclude for this and the L\'evy-It\^o decomposition
for L\'evy bases (see \cite{Ped03}) that for any $p\in\mathbb{R}^{2}$
\begin{equation}
\mathscr{D}_{r}(p;X)=\mathscr{D}_{r}(p;\tilde{X}),\label{DandCnodrift}
\end{equation}
where 
\[
\tilde{X}(p)=\int_{\mathcal{R}+p}F(p-q)\tilde{L}(\mathrm{d}q),
\]
and $\tilde{L}$ as in Theorem \ref{divergenceandvorticitytheorem}.

Now, by using  Lemma \ref{Fubinilemma} in Appendix B, we formalize (\ref{decompositiondiv}). 

\begin{lemma}\label{lemmadecomposition}Let $\mathcal{R}\subset\mathbb{R}^{2}$
	be compact and $F:\mathbb{R}^{2}\rightarrow\mathbb{R}^{2}$ a measurable
	function that is continuously differentiable on $-\mathcal{R}$. Then
	the field
	\begin{equation}
	X(p):=\int_{\mathcal{R}+p}F(p-q)L(\mathrm{d}q),\,\,\,p\in\mathbb{R}^{2},\label{IDXdef-1}
	\end{equation}
	is well defined and continuous in probability. Moreover, the functionals
	$\mathscr{C}_{r}(\cdot;X)$ and $\mathcal{\mathscr{D}}_{r}(\cdot;X)$
	given in (\ref{generalizedcirculation}) and (\ref{eq:generalizedflux}),
	are well defined and they can be decomposed as
	\begin{align}
	\mathscr{C}_{r}(p;X)= & \mathring{\mathscr{C}}_{r}(p;X)+\partial\mathscr{C}_{r}(p;X);\label{curldeco}\\
	\mathcal{\mathscr{D}}_{r}(p;X)= & \mathring{\mathscr{D}}_{r}(p;X)+\partial\mathscr{D}_{r}(p;X),\label{Divdecom}
	\end{align}
	where 
	\begin{align*}
	\mathring{\mathscr{D}}_{r}(p;X) & :=\int_{\mathcal{R}(p)\backslash(\partial\mathcal{R}(p))_{\oplus r}}\int_{D_{r}(p-q)}\nabla\cdot F(y)\mathrm{d}yL\left(\mathrm{d}q\right);\\
	\partial\mathscr{D}_{r}(p;X) & :=\int_{(\partial\mathcal{R}(p))_{\oplus r}}\ointop_{r\mathbb{S}^1(q)}F(p-\cdot)\mathbf{1}_{\mathcal{R}(p)}\cdot n\mathrm{d}sL\left(\mathrm{d}q\right),
	\end{align*}
	and 
	\begin{align*}
	\mathring{\mathscr{C}}_{r}(p;X):= & \int_{\mathcal{R}(p)\backslash(\partial\mathcal{R}(p))_{\oplus r}}\int_{D_{r}(p-q)}\nabla^{\perp}\cdot F(y)\mathrm{d}yL\left(\mathrm{d}q\right);\\
	\partial\mathscr{D}_{r}(p;X):= & \int_{(\partial\mathcal{R}(p))_{\oplus r}}\ointop_{r\mathbb{S}^1(q)}F(p-u)\mathbf{1}_{\mathcal{R}(p)}(u)\cdot\mathrm{d}uL\left(\mathrm{d}q\right).
	\end{align*}
	
\end{lemma}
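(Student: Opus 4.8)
The plan is to prove Lemma \ref{lemmadecomposition} in three stages: first establish well-definedness and stochastic continuity of $X$, then justify that $\mathscr{D}_r(p;X)$ is a well-defined integral, and finally derive the decomposition \eqref{Divdecom} by a Fubini-type interchange combined with the classical (deterministic) divergence theorem applied pointwise inside the stochastic integral.

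\emph{Step 1: Well-definedness and stochastic continuity.} Since $F$ is continuous on the compact set $-\mathcal{R}$, it is bounded there, so $\int_{\mathcal{R}+p}\Phi_L^0(\|F(p-q)\|)\,\mathrm{d}q<\infty$ for every $p$; this is exactly the integrability criterion for stochastic integration against a homogeneous L\'evy basis (cf.\ the argument used in the proof of Proposition \ref{propisotropyambit} and Proposition \ref{convstchintambittype} in Appendix A), so $X(p)$ exists. For stochastic continuity, fix $p$ and $p_n\to p$; the difference $X(p_n)-X(p)$ is an integral of $F(p_n-q)\mathbf{1}_{\mathcal{R}+p_n}(q)-F(p-q)\mathbf{1}_{\mathcal{R}+p}(q)$ against $L$, and uniform continuity of $F$ together with $\mathrm{Leb}((\mathcal{R}+p_n)\triangle(\mathcal{R}+p))\to0$ (continuity of translation in $L^1$ for the indicator of a bounded measurable set) forces the relevant $\Phi_L^0$-integral to zero, hence convergence in probability by the standard continuity estimate for L\'evy-basis integrals.

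\emph{Step 2: Well-definedness of the functionals.} Writing $\mathscr{D}_r(p;X)=r\int_0^{2\pi}X(p+ru(\theta))\cdot u(\theta)\,\mathrm{d}\theta$, one needs this pathwise Lebesgue integral over $\theta$ to make sense almost surely; the natural route is to show $\theta\mapsto X(p+ru(\theta))$ admits a measurable, a.s.\ integrable version. This will follow from Lemma \ref{Fubinilemma} in Appendix B (the stochastic Fubini theorem for L\'evy bases): plugging the kernel representation of $X$ into the $\theta$-integral and exchanging, one gets $\mathscr{D}_r(p;X)=\int_{\mathbb{R}^2}\bigl[r\int_0^{2\pi}F(p+ru(\theta)-q)\mathbf{1}_{\mathcal{R}+p+ru(\theta)}(q)\cdot u(\theta)\,\mathrm{d}\theta\bigr]L(\mathrm{d}q)$, i.e.\ $\int_{\mathbb{R}^2}\bigl(\ointop_{r\mathbb{S}^1(q)}F(p-\cdot)\mathbf{1}_{\mathcal{R}(p)}\cdot n\,\mathrm{d}s\bigr)L(\mathrm{d}q)$, which is exactly the "total" circulation kernel. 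The hypotheses of Lemma \ref{Fubinilemma} (joint measurability and the $\Phi_L^0$-integrability of the inner kernel over $q$, which holds since $F$ is bounded on $-\mathcal{R}$ and the $\theta$-integral only spreads mass over the bounded set $(\partial\mathcal{R}(p))_{\oplus r}\cup\mathcal{R}(p)$) must be checked.

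\emph{Step 3: The decomposition.} Having represented $\mathscr{D}_r(p;X)=\int_{\mathbb{R}^2}K_r(p,q)L(\mathrm{d}q)$ with $K_r(p,q)=\ointop_{r\mathbb{S}^1(q)}F(p-\cdot)\mathbf{1}_{\mathcal{R}(p)}\cdot n\,\mathrm{d}s$, split the domain as $\mathbb{R}^2=(\mathcal{R}(p)\setminus(\partial\mathcal{R}(p))_{\oplus r})\;\sqcup\;(\partial\mathcal{R}(p))_{\oplus r}$ up to a set where $K_r(p,\cdot)=0$: if $q\notin(\partial\mathcal{R}(p))_{\oplus r}$ then either $D_r(q)\subset\mathring{\mathcal{R}}(p)$ — giving $K_r(p,q)=\ointop_{r\mathbb{S}^1(q)}F(p-\cdot)\cdot n\,\mathrm{d}s=\int_{D_r(q)}\nabla\cdot F(p-y)\,\mathrm{d}y$ by the classical divergence theorem \eqref{Classicaldivergence}, which after the substitution $y\mapsto p-y$ is $\int_{D_r(p-q)}\nabla\cdot F(y)\,\mathrm{d}y$ — or $D_r(q)\cap\mathcal{R}(p)=\emptyset$, giving $K_r(p,q)=0$; and on the remaining region $(\partial\mathcal{R}(p))_{\oplus r}$ we simply keep the kernel as is, yielding $\partial\mathscr{D}_r(p;X)$. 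Additivity of the integral against $L$ over disjoint domains then gives \eqref{Divdecom}, and the curl identity \eqref{curldeco} is obtained verbatim replacing $n$ by $n^\perp$ and $\nabla\cdot$ by $\nabla^\perp\cdot$.

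\emph{Main obstacle.} The delicate point is Step 2 — verifying the precise measurability and integrability hypotheses of the stochastic Fubini theorem (Lemma \ref{Fubinilemma}) so that the a.s.\ identity $\mathscr{D}_r(p;X)=\int_{\mathbb{R}^2}K_r(p,q)L(\mathrm{d}q)$ holds simultaneously with the pathwise interpretation $r\int_0^{2\pi}X(p+ru(\theta))\cdot u(\theta)\,\mathrm{d}\theta$; in particular one must ensure the inner $\theta$-integral defining $K_r(p,q)$ is finite and measurable in $(p,q)$ and that the exceptional $\mathbb{P}$-null set does not depend on $\theta$. The geometric bookkeeping in Step 3 — deciding, for each $q$, which of the three cases ($D_r(q)$ inside $\mathcal{R}(p)$, straddling $\partial\mathcal{R}(p)$, or disjoint from $\mathcal{R}(p)$) occurs, and checking this partition is measurable — is routine once the definition of $(\partial\mathcal{R}(p))_{\oplus r}$ as an $r$-parallel set is unwound.
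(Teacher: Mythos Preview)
Your proposal is correct and follows essentially the same three-step structure as the paper's own proof: well-definedness and stochastic continuity via the $\Phi_L^0$-criterion and dominated convergence, application of the stochastic Fubini theorem (Lemma \ref{Fubinilemma}) to rewrite $\mathscr{D}_r(p;X)$ as $\int K_r(p,q)\,L(\mathrm{d}q)$, and then the geometric splitting of the $q$-domain combined with the classical divergence theorem on the interior part. The only cosmetic difference is that the paper verifies the three hypotheses of Lemma \ref{Fubinilemma} (for the drift/Gaussian, small-jump, and big-jump parts of $L$) individually rather than phrasing them as a single $\Phi_L^0$-integrability condition, and makes explicit the region $\mathcal{I}_r(p)=\mathcal{R}(p)\cup(\partial\mathcal{R}(p))_{\oplus r}$ outside of which $K_r(p,\cdot)$ vanishes.
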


\begin{proof}The proof will be divided in three steps. First we prove
	the existence and the stochastic continuity of $X$. Second, we show
	that $\mathscr{D}_{r}(p;X)$ is well defined by verifying that the conditions of Lemma \ref{Fubinilemma}  in
	Appendix B, are satisfied. The decomposition (\ref{Divdecom})
	will follow from this.
	
	\textbf{Existence and continuity in probability:} From Lemma 2.1.5 in \cite{Rosinski07}, we have that 
	\[
	\int_{\mathcal{R}}\Phi_{L}^{0}\left(\left\Vert F(-q)\right\Vert \right)\mathrm{d}q\leq2Leb(\mathcal{R})\Phi_{L}^{0}(c_{F})<\infty,
	\]
	where $c_{F}=\sup_{\mathcal{R}}\left\Vert F(-q)\right\Vert <\infty$.
	This shows that $X$ is well defined. Now, pick $p_{n}\rightarrow0$.
	Using the same lemma, we have that
	\begin{align*}
	\Phi_{L}^{0}\left(\left\Vert F(p_{n}-q)\mathbf{1}_{\mathcal{R}(p_{n})}-F(-q)\mathbf{1}_{\mathcal{R}}\right\Vert \right) \leq2c_{F,\Phi}\mathbf{1}{}_{\mathcal{R}\cup\mathcal{R}(p_{n})},
	\end{align*}
	where $c_{F,\Phi}=\max\{\Phi_{L}^{0}(c_{F}),\Phi_{L}^{0}(2c_{F})\}$.
	Thus, by the Dominated Convergence Theorem, we deduce that, as $n\rightarrow\infty$,
	\[
	\int_{\mathbb{R}^{2}}\Phi_{L}^{0}\left(\left\Vert F(p_{n}-q)\mathbf{1}_{\mathcal{R}(p_{n})}-F(-q)\mathbf{1}_{\mathcal{R}}\right\Vert \right)\mathrm{d}q\rightarrow0,
	\]
	which according to Theorem \ref{DCTstcint} in Appendix , shows that $X$ is continuous in probability. 
	
	\textbf{Application of Stochastic Fubini Theorem}: We now verify that 1.-3. in Lemma \ref{Fubinilemma} in
	Appendix B are satisfied
	for the function $f_{\mathscr{D}}(p,q):=\mathbf{1}_{\mathcal{R}+p} F(p-q)\cdot n(p) $ where we consider $K=[0,2\pi]$, $\varphi(\theta)=p+ru(\theta)$
	with $u(\theta)=(\cos\theta,\sin\theta)'$ and $\left|D\varphi(\theta)\right|=r$. By the continuity of $F$, one can deduce that \linebreak $\left|\gamma\right|r\int_{0}^{2\pi}\int_{\mathcal{R}}\left|F(-q)\cdot u(\theta)\right|\mathrm{d}q\mathrm{d}\theta<\infty$
	and $b^{2}r\int_{0}^{2\pi}\int_{\mathcal{R}}\left|F(-q)\cdot u(\theta)\right|\mathrm{d}q\mathrm{d}\theta<\infty$. Similarly, we get that
	\[
	\int_{0}^{2\pi}\int_{\mathcal{R}}\int_{\left|x\right|\leq1}\left|xF(-q)\cdot u(\theta)\right|^{2}\nu(\mathrm{d}x)\mathrm{d}q\mathrm{d}\theta<\infty.
	\]
	Thus, 1. and 2. hold. Hence, it only remains to verify 3. Before doing this, let us first note that for a fixed $r>0$, $\mathbf{1}_{\mathcal{R}+p+ru(\theta)}(q)=0$ if $d_{\mathcal{R}(p)}(q)>r$. In view of the previous observation, we get that $\chi(q):=\int_Kf_{\mathscr{D}}(\varphi(\theta),q)\left|D\varphi(\theta)\right|\mathrm{d}\theta=0$, whenever $q\in\mathcal{I}_{r}(p)^{c}$, where 
	\begin{equation}
	\mathcal{I}_{r}(p):=\mathcal{R}(p)\cup(\partial\mathcal{R}(p))_{\oplus r}.\label{regioninteIr}
	\end{equation}
	Consequently
	\[
	\int_{\mathbb{R}}\int_{\left|x\right|>1}(\left|x\chi(q)\right|\land1)\nu(\mathrm{d}x)\mathrm{d}q\leq\int_{\mathcal{I}_{r}(p)}\int_{\left|x\right|>1}\nu(\mathrm{d}x)\mathrm{d}q<\infty,
	\]
	i.e. 3. is satisfied.
	
	\textbf{Decomposition: }We now proceed to show that  (\ref{Divdecom}) is valid. By the previous part and Lemma \ref{Fubinilemma}  in
	Appendix B, we can conclude that the functional $\mathscr{D}_{r}(p;X)$ is well defined and almost surely
	\[
	\mathscr{D}_{r}(p;X)=r\int_{\mathcal{I}_{r}(p)}\int_{0}^{2\pi}f_{\mathscr{D}}(p+ru(\theta),q)\mathrm{d}\theta L(\mathrm{d}q),\,\,\,r>0,p\in\mathbb{R}^{2},
	\]
	with $\mathcal{I}_{r}(p)$ as in (\ref{regioninteIr}). On the other
	hand, by the definition of $(\partial\mathcal{R}(p))_{\oplus r}$,
	one get that 
	\begin{align*}
	\ointop_{r\mathbb{S}^1(p-q)}F\mathbf{1}_{-\mathcal{R}}\cdot n\mathrm{d}s & =r\int_{0}^{2\pi}f_{\mathscr{D}}(p+ru(\theta),q)\mathrm{d}\theta\\
	& =\begin{cases}
	r\int_{\{\theta:q-ru(\theta)\in\mathcal{R}(p)\}}F(p-(q-ru(\theta)))\cdot n(\theta)\mathrm{d}\theta, & q\in(\partial\mathcal{R}(p))_{\oplus r};\\
	r\int_{0}^{2\pi}F(p-q+ru(\theta))\cdot n(\theta)\mathrm{d}\theta, & otherwise,
	\end{cases}
	\end{align*}
	and by the usual Stokes Theorem, we have
	\[
	\ointop_{r\mathbb{S}^1(p-q)}F\cdot n\mathrm{d}s=\int_{D_{r}(p-q)}\nabla\cdot F(y)\mathrm{d}y,\,\,\,q\in\mathcal{R}(p)\backslash(\partial\mathcal{R}(p))_{\oplus r}.
	\]
	The decomposition appearing in (\ref{Divdecom}) follows from this.\end{proof}

\subsection{Identification of the rates}

In this part, by using the decomposition obtained in Lemma \ref{lemmadecomposition},
we identify the dominating part of $\mathscr{C}_{r}(p;X)$ and $\mathscr{D}_{r}(p;X)$. 

\subsubsection*{Asymptotics for $\mathring{\mathscr{C}}_{r}(p;X)$ and $\mathring{\mathscr{D}}_{r}(p;X)$}

The following lemma reveals that, as in the temporal case, the asymptotic
rate for $\mathring{\mathscr{C}}_{r}(p;X)$ and $\mathring{\mathscr{D}}_{r}(p;X)$
is the ``classical'' one, i.e. $\pi r^{2}$.

\begin{proposition}\label{asymptoticsint}Let $X,$ $\mathring{\mathscr{C}}_{r}(p;X)$
	and $\mathring{\mathscr{D}}_{r}(p;X)$ be as in Lemma \ref{lemmadecomposition}.
	Then for every $p\in\mathbb{R}^{2}$ we have that as $r\downarrow0$
	\[
	\frac{1}{\pi r^{2}}\mathring{\mathscr{D}_{r}}(p;X)\overset{\mathbb{P}}{\rightarrow}\tilde{\sigma}(p);\,\,\,\frac{1}{\pi r^{2}}\mathring{\mathscr{D}_{r}}(p;X)\overset{\mathbb{P}}{\rightarrow}\tilde{\omega}(p).
	\]
	where
	\[
	\tilde{\sigma}(p):=\int_{\mathcal{R}+p}\mathrm{\nabla}\cdot F(p-q)L(\mathrm{d}q);\,\,\,\tilde{\omega}(p):=\int_{\mathcal{R}+p}\nabla^{\perp}\cdot F(p-q)L(\mathrm{d}q)
	\]
\end{proposition}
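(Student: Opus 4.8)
The plan is to analyze $\mathring{\mathscr{D}}_{r}(p;X)$ directly from its integral representation
\[
\mathring{\mathscr{D}}_{r}(p;X)=\int_{\mathcal{R}(p)\backslash(\partial\mathcal{R}(p))_{\oplus r}}\int_{D_{r}(p-q)}\nabla\cdot F(y)\,\mathrm{d}y\,L(\mathrm{d}q),
\]
and show that the integrand, after dividing by $\pi r^{2}$, converges pointwise in $q$ to $\nabla\cdot F(p-q)$ while staying dominated, so that the convergence of stochastic integrals result (Theorem \ref{DCTstcint} in Appendix A) applies. First I would fix $p$ and set $\phi_r(q):=\frac{1}{\pi r^{2}}\int_{D_{r}(p-q)}\nabla\cdot F(y)\,\mathrm{d}y\cdot\mathbf{1}_{\mathcal{R}(p)\backslash(\partial\mathcal{R}(p))_{\oplus r}}(q)$. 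Since $F$ is continuously differentiable on $-\mathcal{R}$, the map $y\mapsto\nabla\cdot F(y)$ is continuous on the compact set $-\mathcal{R}$, hence uniformly continuous and bounded there, say by $M_F$. For $q\in\mathring{\mathcal{R}}(p)$, once $r$ is small enough that $D_r(p-q)\subset-\mathcal{R}$ we have $\frac{1}{\pi r^{2}}\int_{D_{r}(p-q)}\nabla\cdot F(y)\,\mathrm{d}y\to\nabla\cdot F(p-q)$ by the Lebesgue differentiation / mean-value property of continuous functions; meanwhile the indicator $\mathbf{1}_{\mathcal{R}(p)\backslash(\partial\mathcal{R}(p))_{\oplus r}}(q)\to\mathbf{1}_{\mathring{\mathcal{R}}(p)}(q)=\mathbf{1}_{\mathcal{R}(p)}(q)$ for $\mathcal{H}^2$-a.e.\ $q$ (the boundary having Lebesgue measure zero, which follows from Assumption \ref{assumptionambitset1} or simply from $\partial\mathcal{R}(p)$ being a finite union of Lipschitz curves). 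Thus $\phi_r(q)\to\nabla\cdot F(p-q)\mathbf{1}_{\mathcal{R}(p)}(q)$ for a.e.\ $q$.

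Next I would establish the domination needed to pass to the limit inside the stochastic integral. Because $|\frac{1}{\pi r^{2}}\int_{D_{r}(p-q)}\nabla\cdot F(y)\,\mathrm{d}y|\le M_F$ uniformly in $r$ and $q$, and because $\phi_r$ is supported in the fixed compact set $\mathcal{R}(p)$ (indeed in $\mathcal{I}_r(p)$, but $\phi_r$ vanishes on $(\partial\mathcal{R}(p))_{\oplus r}$ so in fact inside $\mathcal{R}(p)$ which is contained in $\mathcal{R}_{\oplus 1}(p)$ for all $r\le 1$), we obtain $|\phi_r(q)|\le M_F\,\mathbf{1}_{\mathcal{R}_{\oplus 1}(p)}(q)=:g(q)$, an integrand not depending on $r$. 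One then checks that $g$ is $L$-integrable, e.g.\ via Lemma 2.1.5 in \cite{Rosinski07} as in the proof of Lemma \ref{lemmadecomposition}: $\int_{\mathbb{R}^2}\Phi_{L}^{0}(g(q))\,\mathrm{d}q\le 2\,Leb(\mathcal{R}_{\oplus 1})\,\Phi_{L}^{0}(M_F)<\infty$. Finally, $\int_{\mathbb{R}^2}\Phi_{L}^{0}(|\phi_r(q)-\nabla\cdot F(p-q)\mathbf{1}_{\mathcal{R}(p)}(q)|)\,\mathrm{d}q\to0$ as $r\downarrow0$ by dominated convergence (the integrand is bounded by $\Phi_L^0(2M_F)\mathbf{1}_{\mathcal{R}_{\oplus 1}(p)}$ and tends to $0$ pointwise). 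Theorem \ref{DCTstcint} then yields $\frac{1}{\pi r^{2}}\mathring{\mathscr{D}}_{r}(p;X)\overset{\mathbb{P}}{\rightarrow}\int_{\mathcal{R}+p}\nabla\cdot F(p-q)\,L(\mathrm{d}q)=\tilde\sigma(p)$, and the identical argument with $\nabla^{\perp}\cdot F$ replacing $\nabla\cdot F$ gives the statement for $\mathring{\mathscr{C}}_{r}(p;X)$ and $\tilde\omega(p)$.

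The main obstacle I anticipate is not any single estimate but making the two limiting effects—the averaging of $\nabla\cdot F$ over the shrinking disk $D_r(p-q)$ and the retreat of the excluded collar $(\partial\mathcal{R}(p))_{\oplus r}$—interact cleanly on the \emph{same} $q$-set with a single $r$-free dominating function, so that the convergence-of-stochastic-integrals theorem applies in one stroke rather than through a more delicate two-scale argument. A secondary point to be careful about is the behaviour near $\partial\mathcal{R}(p)$: for $q$ close to the boundary, $D_r(p-q)$ may stick out of $-\mathcal{R}$ where $F$ need not be differentiable, but this is harmless because such $q$ lie in $(\partial\mathcal{R}(p))_{\oplus r}$ and are therefore excluded from the domain of integration defining $\mathring{\mathscr{D}}_{r}$; one only needs to note $\mathbf{1}_{\mathcal{R}(p)\backslash(\partial\mathcal{R}(p))_{\oplus r}}(q)=1$ forces $D_r(p-q)\subset -\mathcal{R}$, so $\nabla\cdot F$ is evaluated only where it is defined and bounded. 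Once that is observed, everything reduces to ordinary dominated convergence plus the stochastic-integral convergence theorem already available in Appendix A.
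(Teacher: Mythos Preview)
Your proposal is correct and follows essentially the same route as the paper: bound $\nabla\cdot F$ on $-\mathcal{R}$, use Lebesgue differentiation for the disk averages, dominate $\Phi_{L}^{0}$ of the difference via Lemma 2.1.5 in \cite{Rosinski07}, and conclude by dominated convergence together with Theorem \ref{DCTstcint}. The only minor difference is that the paper invokes the Tietze Extension Theorem to extend $\nabla\cdot F$ continuously to all of $\mathbb{R}^{2}$, whereas you bypass this by observing directly that $q\in\mathcal{R}(p)\setminus(\partial\mathcal{R}(p))_{\oplus r}$ forces $D_{r}(p-q)\subset-\mathcal{R}$; either device works.
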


\begin{proof}Let us first note that since $F$ is continuously differentiable
	on $-\mathcal{R}$, we can argue as in the first step of the proof
	of Lemma \ref{lemmadecomposition} in order to deduce that
	\[
	\int_{\mathcal{R}}\left[\Phi_{L}^{0}(\left|\nabla^{\perp}\cdot F(-q)\right|)+\Phi_{L}^{0}(\left|\mathrm{\nabla}\cdot F(-q)\right|)\right]\mathrm{d}q<\infty,
	\]
	which shows that the fields $\omega$ and $\sigma$ are well defined. In order to finish the proof, thanks to Theorem \ref{DCTstcint} in
	Appendix A, the previous lemma and stationarity, we only
	need to verify that as $r\downarrow0$
	\begin{equation}
	\int_{\mathcal{R}\backslash(\partial\mathcal{R}(p))_{\oplus r}}\Phi_{L}^{0}(h_{r,\mathscr{D}}(q))\mathrm{d}q\rightarrow0,\label{condconvergenceintC}
	\end{equation}
	where for $q\in\mathcal{R}\backslash(\partial\mathcal{R}(p))_{\oplus r}$
	\[ 	h_{r,\mathscr{D}}(q):=  \frac{1}{\pi r^{2}}\int_{D_{r}(-q)}\left\{ \nabla\cdot F(y)-\nabla\cdot F(-q)\right\} \mathrm{d}y. \]
	In the present framework, the mapping $y\mapsto\nabla\cdot F(y)$
	is continuous on $-\mathcal{R}$. Thus, since $\mathcal{R}$ is
	compact, the Tietze Extension Theorem allow us to extend continuously
	$h_{r,\mathscr{D}}$ on $\mathbb{R}^{2}$, in such a way
	\[
	\left|h_{r,\mathscr{D}}(q)\right|\leq2\sup_{\mathbb{R}^{2}}\left|\nabla\cdot F(q)\right|<\infty,\,\,\,q\in\mathbb{R}^{2}.
	\]
	Therefore, by Lemma 2.1.5 in \cite{Rosinski07}, we obtain that 
	\[
	\left|\Phi_{L}^{0}(h_{r,\mathscr{D}}(q))\right|\mathbf{1}_{\mathcal{R}\backslash(\partial\mathcal{R}(p))_{\oplus r}}(q)\leq2\Phi_{L}^{0}(c_{1}),
	\]
	where $c_{1}=2\sup_{\mathbb{R}^{2}}\left|\nabla\cdot F(q)\right|$.
	Equation (\ref{condconvergenceintC}) then follows as an application
	of the Lebesgue Differentiation Theorem and the Dominated Convergence
	Theorem.\end{proof}

Proposition \ref{asymptoticsint} formalizes the intuition discussed
in Subsection \ref{subsec:Some-intuitive-description}, i.e.
we have shown that if $X$ is of the form of (\ref{IDXdef-1}), then
the limit behavior of $\mathscr{D}_{r}(p;X)$
is completely determined by $\partial\mathscr{D}_{r}(p;X)$,
which is in turn depending on $L$, $F$ and the geometry of
$\partial\mathcal{R}$. 

\subsubsection*{The functionals $\partial\mathscr{C}_{r}(p;X)$ and $\partial\mathscr{D}_{r}(p;X)$
	are negligible in the compound Poisson case}

Not surprisingly, as the following lemma shows, when the L\'evy seed
of $L$ has a compound Poisson distribution we get that $\partial\mathscr{C}_{r}(p;X)$
and $\partial\mathscr{D}_{r}(p;X)$ are negligible.

\begin{proposition}\label{propboundarycomppoisso}Let $X,$ $\partial\mathscr{C}_{r}(p;X)$
	and $\partial\mathscr{D}_{r}(p;X)$ be as in Lemma \ref{lemmadecomposition}
	and assume that $L$', the L\'evy seed of $L$, satisfies
	\[
	\mathcal{C}(z\ddagger L')=\int_{\mathbb{R}}(e^{izx}-1)\nu(\mathrm{d}x),\,\,\,z\in\mathbb{R},
	\]
	where $\nu(\mathbb{R})<\infty$. Let $(a_{r})_{r>0}$ be any collection
	of real number such that $a_{r}\rightarrow0$ as $r\downarrow0$.
	Then for all $p\in\mathbb{R}^{2}$ we have that
	\[
	a_{r}^{-1}\partial\mathscr{C}_{r}(p;X)\overset{\mathbb{P}}{\rightarrow}0;\,\,\,a_{r}^{-1}\partial\mathscr{D}_{r}(p;X)\overset{\mathbb{P}}{\rightarrow}0.
	\]
\end{proposition}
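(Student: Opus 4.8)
The plan is to exploit the compound Poisson structure of $L$ to reduce the problem to a pathwise, almost-surely finite computation. Since $\nu(\mathbb{R})<\infty$ and the seed has no drift and no Gaussian part, $L$ restricted to any bounded Borel set $B$ can be represented as a finite sum $L(B)=\sum_{j}x_j\mathbf{1}_{B}(q_j)$, where $\{(q_j,x_j)\}$ are the atoms of a Poisson random measure $N$ on $\mathbb{R}^2\times(\mathbb{R}\setminus\{0\})$ with intensity $\mathrm{d}q\,\nu(\mathrm{d}x)$; in particular, in any ball $D_\rho(p)$ there are almost surely only finitely many atoms $q_j$, and with probability one no atom lies exactly on the Lipschitz curve $\partial\mathcal{R}(p)$ (this last point because $\mathcal{H}^2(\partial\mathcal{R}(p))=0$ and the intensity in the space variable is Lebesgue measure). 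First I would fix $p$ and a radius $\rho>0$ and work on the event $\Omega_\rho$ that $D_\rho(p)$ contains finitely many atoms, none on $\partial\mathcal{R}(p)$; this event has probability one and it suffices to prove the convergence on it.

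Next I would write out $\partial\mathscr{D}_r(p;X)$ explicitly using this atomic representation. By the formula in Lemma \ref{lemmadecomposition},
\[
\partial\mathscr{D}_{r}(p;X)=\int_{(\partial\mathcal{R}(p))_{\oplus r}}\Bigl(\ointop_{r\mathbb{S}^1(q)}F(p-\cdot)\mathbf{1}_{\mathcal{R}(p)}\cdot n\,\mathrm{d}s\Bigr)L(\mathrm{d}q)=\sum_{j:\,q_j\in(\partial\mathcal{R}(p))_{\oplus r}}x_j\,\Psi_r(q_j),
\]
where $\Psi_r(q):=\ointop_{r\mathbb{S}^1(q)}F(p-u)\mathbf{1}_{\mathcal{R}(p)}(u)\cdot n(u)\,\mathrm{d}s$ and the sum is finite on $\Omega_\rho$ once $r<\rho$. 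The key observation is that for a fixed atom $q_j\notin\partial\mathcal{R}(p)$, once $r$ is smaller than $d_{\mathcal{R}(p)}(q_j)\wedge d_{\mathcal{R}(p)^{*}}(q_j)>0$, the circle $r\mathbb{S}^1(q_j)$ lies entirely inside $\mathcal{R}(p)$ or entirely outside it. If it lies outside, $\Psi_r(q_j)=0$; if it lies inside, $\mathbf{1}_{\mathcal{R}(p)}\equiv1$ on the circle and Stokes' theorem gives $\Psi_r(q_j)=\int_{D_r(q_j)}\nabla\cdot F(p-y)\,\mathrm{d}y=O(r^2)$ by continuity of $\nabla\cdot F$. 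In either case $\Psi_r(q_j)\to0$. Since the sum defining $\partial\mathscr{D}_r(p;X)$ has boundedly many terms (bounded by the number of atoms in $D_\rho(p)$) and every term tends to $0$, we conclude $\partial\mathscr{D}_r(p;X)\to0$ almost surely; combined with $a_r\to0$... wait — that goes the wrong way, so I must be slightly more careful: actually $a_r^{-1}$ blows up. I would therefore need the quantitative bound that each surviving term is $O(r^2)$, which is not quite enough against an arbitrary $a_r\to0$. The correct argument is that for each fixed $\omega\in\Omega_\rho$ there is $r_0(\omega)>0$ below which $\partial\mathscr{D}_r(p;X)(\omega)$ is \emph{identically zero}, because for $r<\min_j\bigl(d_{\mathcal{R}(p)}(q_j)\wedge d_{\mathcal{R}(p)^{*}}(q_j)\bigr)$ taken over the finitely many atoms in $D_\rho(p)$, either $\Psi_r(q_j)=0$ (atom genuinely outside $\mathcal{R}(p)$, circle outside) or the atom lies in the interior and $\Psi_r(q_j)=\int_{D_r(q_j)}\nabla\cdot F(p-y)\,\mathrm{d}y$, while simultaneously $q_j\notin(\partial\mathcal{R}(p))_{\oplus r}$ so that atom does not even enter the sum. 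Hence $\partial\mathscr{D}_r(p;X)(\omega)=0$ for all small $r$, and a fortiori $a_r^{-1}\partial\mathscr{D}_r(p;X)\to0$ on $\Omega_\rho$, hence almost surely, hence in probability.

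The same reasoning, verbatim, applies to $\partial\mathscr{C}_r(p;X)$ using $n^{\perp}$ in place of $n$ (as noted in the paper, the argument is insensitive to this substitution), giving $a_r^{-1}\partial\mathscr{C}_r(p;X)\overset{\mathbb{P}}{\rightarrow}0$. The main obstacle, and the only genuinely delicate point, is establishing that almost surely no Poisson atom sits exactly on $\partial\mathcal{R}(p)$ and that on a full-probability event the number of atoms near $\partial\mathcal{R}(p)$ stays finite as $r\downarrow0$ — i.e. the interchange of ``finitely many atoms'' with ``the curve has zero Lebesgue measure in the plane.'' This is handled by noting $(\partial\mathcal{R}(p))_{\oplus r}\downarrow\partial\mathcal{R}(p)$ as $r\downarrow0$ with $\mathrm{Leb}\bigl((\partial\mathcal{R}(p))_{\oplus r}\bigr)\to0$ (a consequence of the Steiner-type bound in Appendix A under Assumption \ref{assumptionambitset1}), so the expected number of atoms in $(\partial\mathcal{R}(p))_{\oplus r}$, namely $\nu(\mathbb{R})\,\mathrm{Leb}\bigl((\partial\mathcal{R}(p))_{\oplus r}\bigr)$, tends to $0$; in particular $N\bigl((\partial\mathcal{R}(p))\times\mathbb{R}\bigr)=0$ almost surely. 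Everything else is the pathwise Stokes computation above.
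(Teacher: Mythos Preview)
Your argument is correct and in fact yields almost sure convergence, which is stronger than the convergence in probability asserted. There are two small notational slips: the quantity $d_{\mathcal{R}(p)}(q_j)\wedge d_{\mathcal{R}(p)^{*}}(q_j)$ is identically zero (one of the two distances always vanishes), and what you really mean is $d_{\partial\mathcal{R}(p)}(q_j)>0$; also the ball $D_\rho(p)$ must be taken large enough to contain $(\partial\mathcal{R}(p))_{\oplus r_0}$, i.e.\ $\rho>\mathrm{diam}(\mathcal{R})+r_0$, since the atoms that matter sit near $\partial\mathcal{R}(p)=\partial\mathcal{R}+p$ rather than near $p$ itself. Neither affects the substance.

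The paper's route is different and considerably shorter. It never invokes the atomic representation but works directly with cumulants: by the formula for the characteristic function of a deterministic integral against a homogeneous L\'evy basis,
\[
\bigl|\mathcal{C}\bigl(z\ddagger a_r^{-1}\partial\mathscr{D}_r(p;X)\bigr)\bigr|
=\Bigl|\int_{(\partial\mathcal{R}(p))_{\oplus r}}\mathcal{C}\bigl(z a_r^{-1}h_r(q)\ddagger L'\bigr)\,\mathrm{d}q\Bigr|
\le c_{L'}\,\mathrm{Leb}\bigl((\partial\mathcal{R}(p))_{\oplus r}\bigr)\to 0,
\]
where $c_{L'}:=\sup_{z\in\mathbb{R}}|\mathcal{C}(z\ddagger L')|\le 2\nu(\mathbb{R})<\infty$ because $|e^{izx}-1|\le 2$. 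The key feature exploited here is that for a compound Poisson seed the cumulant function is \emph{uniformly bounded in $z$}, so the blow-up of $a_r^{-1}$ is absorbed for free; the only remaining input is $\mathrm{Leb}\bigl((\partial\mathcal{R}(p))_{\oplus r}\bigr)\to 0$. Your approach trades this analytic trick for an explicit pathwise picture (finitely many atoms, none on the boundary, eventual vanishing), which makes the mechanism more transparent and delivers a stronger mode of convergence, at the cost of a longer argument and the need to justify the atomic representation of the stochastic integral.
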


\begin{proof}By Lemma \ref{lemmadecomposition} and our assumption, as $r\downarrow0$
	\[ 		\left|\mathcal{C}(z\ddagger a_{r}^{-1}\partial\mathscr{C}_{r}(p;X))\right|\leq2c_{L'}Leb((\partial\mathcal{R}(p))_{\oplus r})\rightarrow0,\]
	where $c_{L'}=\sup_{\mathbb{R}}\left|\mathcal{C}(z\ddagger L')\right|<\infty$.\end{proof}

\begin{remark}Observe that at this point none of the results of this
	section have used Assumption \ref{assumptionambitset1}. We can therefore
	infer that Theorem \ref{divergenceandvorticitytheorem} holds for
	any field of the form of (\ref{IDXdef-1}) for which $F$ is continuously
	differentiable, $\mathcal{R}$ is compact and the L\'evy seed of $L$
	is distributed according to a compound Poisson distribution. Assumption
	\ref{assumptionambitset1} is crucial when this is not the case.
	
\end{remark}

\subsubsection*{A fundamental decomposition of $\partial\mathscr{C}_{r}(p;X)$ and
	$\partial\mathscr{D}_{r}(p;X)$}

Under the framework of Assumption \ref{assumptionambitset1}, we have
that $\partial\mathcal{R}=\bigcup_{i=1}^{n}\partial\mathcal{R}_{i}$
where all the $\partial\mathcal{R}_{i}'s$ are disjoint and closed.
This implies, in particular, that for $r$ small enough the parallel
sets of $\partial\mathcal{R}$ satisfies that
\[
(\partial\mathcal{R})_{\oplus r}=\bigcup_{i=1}^{n}(\partial\mathcal{R}_{i})_{\oplus r},
\]
with $(\partial\mathcal{R}_{i})_{\oplus r}$ disjoint. Consequently,
by Lemma \ref{lemmadecomposition}, almost surely
\begin{align}
\partial\mathscr{D}_{r}(p;X)= & \int_{(\partial\mathcal{R}_{1}+p)_{\oplus r}}\ointop_{r\mathbb{S}^1(q)}F(p-\cdot)\mathbf{1}_{\mathcal{R}_{1}(p)}\cdot n\mathrm{d}sL\left(\mathrm{d}q\right)\label{decompositionTauD}\\
& \sum_{j=1}^{m}\int_{(\partial\mathcal{R}_{j}+p)_{\oplus r}}\ointop_{r\mathbb{S}^1(q)}F(p-\cdot)\mathbf{1}_{\mathcal{R}_{j}^{*}(p)}\cdot n\mathrm{d}s\mathrm{d}L\left(\mathrm{d}q\right).\nonumber 
\end{align}
where the summands are independent. An analogous decomposition holds for $\partial\mathscr{D}_{r}(p;X)$. $\partial\mathscr{C}_{r}(p;X)$ and $\partial\mathscr{D}_{r}(p;X)$
consist of independent sums of functionals of the type of 
\begin{align}
\mathscr{T}_{r,v}^{i}(p;F,A,L): & =\int_{(\partial A+p)_{\oplus r}}\ointop_{r\mathbb{S}^1(q)}F(p-u)\mathbf{1}_{B^{i}(p)}(u)\cdot\mathrm{d}uL\left(\mathrm{d}q\right);\label{auxfunctionalvorticity}\\
\mathscr{T}_{r,d}^{i}(p;F,A,L): & =\int_{(\partial A+p)_{\oplus r}}\ointop_{r\mathbb{S}^1(q)}F(p-\cdot)\mathbf{1}_{B^{i}(p)}\cdot n\mathrm{d}s\mathrm{d}L\left(\mathrm{d}q\right),\label{auxfunctionaldiv}
\end{align}
with $B^{1}=A$, $B^{2}=A^{*}$. Hence, we proceed to study these functionals in more detail. To improve the presentation of the following results, we introduce
a simplified version of Assumption \ref{assumptionambitset1}:

\begin{assumption}\label{assumption2}Let $A\subseteq\mathbb{R}^{2}$
	be a Jordan domain with Lipschitz-regular boundary satisfying that 
	\[
	\int_{\mathbb{S}^{1}}\left\lbrace  \sum_{q:(q,u)\in N(A)}(\delta_{A}(q,u)\wedge1)+\sum_{q:(q,u)\in N(A^{*})}(\delta_{A^{*}}(q,u)\wedge1)\right\rbrace\mathcal{H}^{1}(\mathrm{d}u)<\infty,
	\]
	and $F:\mathbb{R}^{2}\rightarrow\mathbb{R}^{2}$ a measurable function that is continuous on either $A\cap(\partial A)_{\oplus r_{0}}$ or
	$A^{*}\cap(\partial A)_{\oplus r_{0}}$ for some $r_{0}>0$.
	
\end{assumption}
As for the case of $\mathscr{C}_{r}(p;X)$ and $\mathscr{D}_{r}(p;X)$, we will only proof our statements for $\mathscr{T}_{r,d}^{i}(p;F,A,L)$.
\subsubsection*{Negligibility of $\partial\mathscr{C}_{r}(p;X)$ and $\partial\mathscr{D}_{r}(p;X)$
	in the case when $\left.F\right|_{-\partial\mathcal{R}}=0$ and $L$
	arbitrary}

\begin{theorem}Let $F:\mathbb{R}^{2}\rightarrow\mathbb{R}^{2}$ be
	such that $F(-\cdot)$ and $A\subseteq\mathbb{R}^{2}$ are as in Assumption
	\ref{assumption2}. Then for $r>0$ small, the functionals $\mathscr{T}_{r,v}^{i}(\cdot;F,A,L)$
	and $\mathscr{T}_{r,d}^{i}(\cdot;F,A,L)$ introduced in (\ref{auxfunctionalvorticity})
	and (\ref{auxfunctionaldiv}) are well defined. Furthermore, if $F(-\cdot)$
	is continuously differentiable on $B^{i}\cap(\partial A)_{\oplus r_{0}}$
	and $\left.F\right|_{-\partial A}=0$, then for any $p\in\mathbb{R}^{2}$,
	it holds that for $i=1,2$, as $r\downarrow0$,
	\begin{equation}
	r^{-2}\mathscr{T}_{r,v}^{i}(p;F,C,L)\overset{\mathbb{P}}{\rightarrow}0;\,\,\,\,r^{-2}\mathscr{T}_{r,d}^{i}(p;F,C,L)\overset{\mathbb{P}}{\rightarrow}0.\label{convboundaryFnull}
	\end{equation}
\end{theorem}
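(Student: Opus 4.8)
The plan is to use the representation from Lemma~\ref{lemmadecomposition} together with the convergence criterion for stochastic integrals with respect to L\'evy bases (Theorem~\ref{DCTstcint} in Appendix~A), reducing everything to an estimate of the integrand of $\mathscr{T}_{r,d}^{i}$ divided by $r^{2}$. Write $g_{r}(q):=\ointop_{r\mathbb{S}^{1}(q)}F(p-\cdot)\mathbf{1}_{B^{i}(p)}\cdot n\,\mathrm{d}s$, so that $\mathscr{T}_{r,d}^{i}(p;F,A,L)=\int_{(\partial A+p)_{\oplus r}}g_{r}(q)L(\mathrm{d}q)$. By Theorem~\ref{DCTstcint}, to obtain $r^{-2}\mathscr{T}_{r,d}^{i}(p;F,A,L)\overset{\mathbb{P}}{\rightarrow}0$ it suffices to show $\int_{(\partial A+p)_{\oplus r}}\Phi_{L}^{0}\bigl(r^{-2}|g_{r}(q)|\bigr)\mathrm{d}q\rightarrow0$ as $r\downarrow0$. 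Since $\Phi_{L}^{0}$ is subadditive up to a constant (Lemma~2.1.5 in \cite{Rosinski07}) and $\Phi_{L}^{0}(x)=O(x^{2})$ near $0$, the key is a pointwise bound of the form $|g_{r}(q)|\le C r^{2}\varepsilon(r)$ for $q\in(\partial A+p)_{\oplus r}$, where $\varepsilon(r)\rightarrow0$, combined with $\mathrm{Leb}\bigl((\partial A+p)_{\oplus r}\bigr)\rightarrow0$; the Steiner-type formula in Appendix~A (invoked through Assumption~\ref{assumption2}) controls this last quantity, in fact $\mathrm{Leb}((\partial A)_{\oplus r})=O(r)$.

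The heart of the argument is the pointwise estimate on $g_{r}(q)$. Fix $q\in(\partial A+p)_{\oplus r}$ and let $q_{0}\in\partial A+p$ be (a) nearest point. For those $q$ whose distance to $\partial A+p$ is exactly $r$ the arc $r\mathbb{S}^{1}(q)$ barely touches the region $B^{i}(p)$, so $g_{r}(q)$ is already small; the bulk of the mass of $(\partial A+p)_{\oplus r}$ is concentrated where $r\mathbb{S}^{1}(q)$ genuinely straddles $\partial A+p$. There, one splits $r\mathbb{S}^{1}(q)$ into the sub-arc lying in $B^{i}(p)$ and applies the divergence theorem on the corresponding circular segment: $g_{r}(q)=\int_{S}\nabla\cdot F(p-y)\,\mathrm{d}y-\int_{\Gamma}F(p-y)\cdot n_{\Gamma}(y)\,\mathrm{d}y$, where $S\subset D_{r}(q)$ is that segment and $\Gamma\subset\partial A+p$ is the chord-like piece of boundary cut out by $D_{r}(q)$. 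The area term is $O(r^{2})\sup_{B^{i}\cap(\partial A)_{\oplus r_{0}}}|\nabla\cdot F|$, which after division by $r^{2}$ is merely bounded, not vanishing; this is where the hypothesis $\left.F\right|_{-\partial A}=0$ enters. Because $F(p-\cdot)$ vanishes on $\partial A+p$ and is $C^{1}$ near it, on $\Gamma$ we have $|F(p-y)|\le C\,\mathrm{dist}(y,\partial A+p\cap\{F=0\})$ — more precisely $|F(p-y)|\le C\,\mathrm{diam}(\Gamma)=O(r)$ — and $\mathcal{H}^{1}(\Gamma)=O(r)$, so the line-integral term is $O(r^{2})$ but with a constant that tends to $0$ with $r$; and for the area term, again $\left.F\right|_{-\partial A}=0$ with $F$ $C^{1}$ forces $\nabla\cdot F(p-y)=O(\mathrm{dist}(y,\partial A+p))=O(r)$ on $S$, giving $\int_{S}\nabla\cdot F(p-y)\,\mathrm{d}y=O(r^{3})$. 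Hence $|g_{r}(q)|\le C r^{2}\cdot o(1)$ uniformly on the relevant part of $(\partial A+p)_{\oplus r}$, as required.

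Assembling the pieces: $\Phi_{L}^{0}(r^{-2}|g_{r}(q)|)$ is bounded on $(\partial A+p)_{\oplus r}$ (using $|g_{r}(q)|\le C r^{2}$ and Lemma~2.1.5 in \cite{Rosinski07}), and it in fact tends to $0$ pointwise as $r\downarrow0$ because $r^{-2}|g_{r}(q)|\rightarrow0$ and $\Phi_{L}^{0}(0)=0$; since $\mathrm{Leb}((\partial A+p)_{\oplus r})\rightarrow0$, the dominated convergence theorem yields $\int_{(\partial A+p)_{\oplus r}}\Phi_{L}^{0}(r^{-2}|g_{r}(q)|)\,\mathrm{d}q\rightarrow0$, and Theorem~\ref{DCTstcint} gives the claimed convergence in probability for $\mathscr{T}_{r,d}^{i}$; the case of $\mathscr{T}_{r,v}^{i}$ is identical upon replacing $n$ by $n^{\perp}$ and $\nabla\cdot$ by $\nabla^{\perp}\cdot$. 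The well-definedness of the functionals for small $r$ follows from the same boundedness of $g_{r}$ together with the $O(r)$ bound on $\mathrm{Leb}((\partial A)_{\oplus r})$ and the argument already used in the proof of Lemma~\ref{lemmadecomposition}.

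The main obstacle I anticipate is making the geometric decomposition of $r\mathbb{S}^{1}(q)\cap B^{i}(p)$ into circular segments rigorous and uniform: $\partial A$ is only Lipschitz-regular, not smooth, so for $q$ near the irregular points of $\partial A$ the intersection $D_{r}(q)\cap(\partial A+p)$ need not be a single arc and $\Gamma$ need not be a nice chord. This is precisely what the integrability condition in Assumption~\ref{assumption2} is designed to handle — it guarantees, via the reach function $\delta_{A}$ and the Steiner-type formula, that the set of ``bad'' $q$ (those seeing more than one boundary piece, or sitting near a corner) has Lebesgue measure $o(r)$, so their contribution is negligible even under only the crude bound $|g_{r}(q)|\le C r^{2}$. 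On the complement, $D_{r}(q)\cap(\partial A+p)$ is a graph over the tangent line at $q_{0}$ and the segment decomposition goes through with uniform constants. Turning this ``measure of bad set is $o(r)$'' claim into a clean quantitative statement, and verifying the $C^{1}$-control of $F$ is genuinely uniform on $B^{i}\cap(\partial A)_{\oplus r_{0}}$ rather than merely locally, is the technical crux; I would isolate it as a separate geometric lemma (relying on Theorems~\ref{lemmadiffeo} and \ref{lemmadiffeo-1}) before carrying out the probabilistic estimate above.
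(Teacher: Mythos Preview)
Your overall structure matches the paper's: bound $r^{-2}|g_{r}(q)|$ uniformly, use Lemma~2.1.5 of \cite{Rosinski07} to control $\Phi_{L}^{0}$, let $\mathrm{Leb}((\partial A+p)_{\oplus r})\to0$ do the rest, and invoke Theorem~\ref{DCTstcint}. But you work far harder than necessary and make one false claim along the way.

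The false claim is that $\left.F\right|_{-\partial A}=0$ with $F\in C^{1}$ forces $\nabla\cdot F(p-y)=O(\mathrm{dist}(y,\partial A+p))$. It does not: $F(x,y)=(x,0)$ vanishes on the $y$-axis yet $\nabla\cdot F\equiv1$. More importantly, you do not need this $o(r^{2})$ improvement at all. A uniform bound $|g_{r}(q)|\le Cr^{2}$ already suffices: then $\Phi_{L}^{0}(r^{-2}|g_{r}(q)|)\le 2\Phi_{L}^{0}(C)$ by Lemma~2.1.5 of \cite{Rosinski07}, and integrating this constant over a set whose Lebesgue measure tends to $0$ gives the result directly --- no pointwise limit, no $\varepsilon(r)$, no dominated convergence on a moving domain. (Incidentally, in your segment decomposition the line integral over $\Gamma\subset\partial A+p$ is not merely $O(r^{2})$ with small constant: it is exactly zero, since $F(p-\cdot)$ vanishes identically on $\partial A+p$.)

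The paper obtains the $O(r^{2})$ bound in one line via the Mean Value Theorem (this is Remark~\ref{remarkO(1)}, proved as in Lemma~\ref{keylemmaasymptotics}~ii)): for $q\in(\partial A+p)_{\oplus r}$ with nearest boundary point $q_{0}$ one has $F(p-q-ru(\theta))=F(p-q-ru(\theta))-F(p-q_{0})$, and since $|q-q_{0}|\le r$ this has norm at most $2r\sup\|DF\|$; integrating over the circle of arclength $2\pi r$ gives $|g_{r}(q)|\le 4\pi r^{2}\sup\|DF\|$. This holds for \emph{every} $q$ in the parallel set with no case distinction, so none of the reach/Steiner machinery in Assumption~\ref{assumption2} is actually invoked in this theorem. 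Your anticipated obstacle --- controlling corners and multi-arc intersections of $r\mathbb{S}^{1}(q)\cap\partial A$ --- is a genuine issue in Theorem~\ref{Tauasymptotics}, where one needs the precise limit of $r^{-1}g_{r}$, but it is irrelevant here where a crude uniform bound is enough.
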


\begin{proof}We first note that by Lemma \ref{keylemmaasymptotics} and its subsequent remark in Appendix B, 
	\[ c_{1}=\sup_{r\leq r_{0,}q\in(\partial A)_{\oplus r}}\left|r^{-2}\ointop_{r\mathbb{S}^1(q)}F(-\cdot)\mathbf{1}_{B^{i}}\cdot n\mathrm{d}x\right|<\infty.\] 
	If $c_{1}=0$, then the result is trivial, so assume
	that $c_{1}>0$. An application of Lemma 2.1.5 in \cite{Rosinski07}
	gives us that for $i=1,2$
	\[ 
	\int_{(\partial A)_{\oplus r}}\Phi_{L}^{0}\left(r^{-2}\ointop_{r\mathbb{S}^1(q)}F(-\cdot)\mathbf{1}_{B^{i}}\cdot n\mathrm{d}x\right)\mathrm{d}q  \leq2Leb((\partial A)_{\oplus r})\Phi_{L}^{0}\left(c_{1}\right)\rightarrow0, \]
	which by stationarity and Theorem \ref{DCTstcint} in Appendix A, give all the conclusions
	in this theorem.\end{proof}

\begin{corollary}\label{corFvanishboundary}Let Assumption \ref{assumptionambitset1}
	holds. If $\left.F\right|_{-\mathcal{R}}=0$, then for any $p\in\mathbb{R}^{2}$,
	when $r\downarrow0$, we have that
	\[
	r^{-2}\partial\mathscr{C}_{r}(p;X)\overset{\mathbb{P}}{\rightarrow}0;\,\,\,\,r^{-2}\partial\mathscr{D}_{r}(p;X)\overset{\mathbb{P}}{\rightarrow}0.
	\]
\end{corollary}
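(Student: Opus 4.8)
The plan is to reduce the statement to the preceding theorem via the fundamental decomposition of $\partial\mathscr{C}_{r}(p;X)$ and $\partial\mathscr{D}_{r}(p;X)$ into finitely many independent pieces, and then to use that a finite sum of $o_{\mathbb{P}}(r^{2})$ terms is again $o_{\mathbb{P}}(r^{2})$.

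First I would invoke Assumption \ref{assumptionambitset1} to record the geometry of $\partial\mathcal{R}$: one has $\partial\mathcal{R}=\bigcup_{i=1}^{n}\partial\mathcal{R}_{i}$ with the $\partial\mathcal{R}_{i}$ pairwise disjoint and compact, so for $r>0$ small the parallel sets $(\partial\mathcal{R}_{i})_{\oplus r}$ are pairwise disjoint and $(\partial\mathcal{R})_{\oplus r}=\bigsqcup_{i=1}^{n}(\partial\mathcal{R}_{i})_{\oplus r}$. Together with Lemma \ref{lemmadecomposition} this gives the almost sure identity (\ref{decompositionTauD}),
\[
\partial\mathscr{D}_{r}(p;X)=\mathscr{T}_{r,d}^{1}(p;F,\mathcal{R}_{1},L)+\sum_{j=2}^{n}\mathscr{T}_{r,d}^{2}(p;F,\mathcal{R}_{j},L),
\]
with the summands independent, and the analogous identity for $\partial\mathscr{C}_{r}(p;X)$ in terms of the functionals $\mathscr{T}_{r,v}^{i}$; here $B^{1}=\mathcal{R}_{1}$ and $B^{2}=\mathcal{R}_{j}^{*}$.

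Next I would check that each $\mathcal{R}_{i}$, paired with $F$, meets the hypotheses of the preceding theorem. Its integrability requirement (Assumption \ref{assumption2}) for $\mathcal{R}_{i}$ is part of Assumption \ref{assumptionambitset1}. Since $\mathcal{R}$ is compact and $\partial\mathcal{R}_{i}\subseteq\partial\mathcal{R}\subseteq\mathcal{R}$, the hypothesis $\left.F\right|_{-\mathcal{R}}=0$ forces $\left.F\right|_{-\partial\mathcal{R}_{i}}=0$ for every $i$. Finally, by the disjointness and nesting conditions in Assumption \ref{assumptionambitset1} the collars $B^{i}\cap(\partial\mathcal{R}_{i})_{\oplus r_{0}}$ are contained in $\mathcal{R}$ once $r_{0}>0$ is small, and $F(-\cdot)$ is continuously differentiable on $\mathcal{R}$ by the standing hypotheses on (\ref{IDXdef}); hence $F(-\cdot)$ is continuously differentiable on each such collar. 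The preceding theorem therefore applies to every summand and yields, as $r\downarrow0$, $r^{-2}\mathscr{T}_{r,d}^{i}(p;F,\mathcal{R}_{j},L)\overset{\mathbb{P}}{\rightarrow}0$ and likewise $r^{-2}\mathscr{T}_{r,v}^{i}(p;F,\mathcal{R}_{j},L)\overset{\mathbb{P}}{\rightarrow}0$. Adding up the finitely many pieces in (\ref{decompositionTauD}) and in its circulation analogue then gives $r^{-2}\partial\mathscr{D}_{r}(p;X)\overset{\mathbb{P}}{\rightarrow}0$ and $r^{-2}\partial\mathscr{C}_{r}(p;X)\overset{\mathbb{P}}{\rightarrow}0$, which is the claim.

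I do not expect a genuine obstacle here; the one point requiring a bit of care is the verification that the collar neighbourhoods $B^{i}\cap(\partial\mathcal{R}_{i})_{\oplus r_{0}}$ lie inside $\mathcal{R}$, so that the differentiability hypothesis of the preceding theorem is satisfied, and this follows routinely from the disjointness and strict nesting built into Assumption \ref{assumptionambitset1}.
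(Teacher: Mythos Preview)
Your proposal is correct and is precisely the argument the paper has in mind: the corollary is stated without proof immediately after the unnamed theorem, and the intended derivation is exactly to feed each summand of the decomposition (\ref{decompositionTauD}) into that theorem and add up. Your verification that the collars $B^{i}\cap(\partial\mathcal{R}_{i})_{\oplus r_{0}}$ sit inside $\mathcal{R}$ (so that the differentiability hypothesis is inherited) and that $\left.F\right|_{-\partial\mathcal{R}_{i}}=0$ follows from the global hypothesis is the only bookkeeping required, and you handle it correctly; note incidentally that the hypothesis in the corollary is a typo for $\left.F\right|_{-\partial\mathcal{R}}=0$, consistent with the section heading and the preceding theorem, though your argument goes through under either reading.
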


\subsubsection*{Asymptotics of $\mathscr{T}_{r,v}^{i}(p;F,C,L)$ and $\mathscr{T}_{r,d}^{i}(p;F,C,L)$
	in the case when $\left.F\right|_{-\partial\mathcal{R}}\protect\neq0$ }

Let us now concentrate on the situation when $\left.F\right|_{-\partial\mathcal{R}}\protect\neq0$. Recall
that $v_{\beta}=2\{2\int_{0}^{1}(1-s^{2})^{\beta/2}\mathrm{d}s\}^{1/\beta}$
and that $(x,y)^{\perp}=(-y,x)$.

\begin{theorem}\label{Tauasymptotics}Let $F:\mathbb{R}^{2}\rightarrow\mathbb{R}^{2}$
	be such that $F(-\cdot)$ and $A\subseteq\mathbb{R}^{2}$ are as in
	Assumption \ref{assumption2}. Take $\mathscr{T}_{r,v}^{i}(\cdot;F,A,L)$
	and $\mathscr{T}_{r,d}^{i}(\cdot;F,A,L)$ as in (\ref{auxfunctionalvorticity})
	and (\ref{auxfunctionaldiv}), respectively, with $L$ a homogeneous
	L\'evy basis with characteristic triplet $(\gamma,b,\nu)$. Assume that
	$\left.F\right|_{-\partial A}\neq0$. Then, 
	\begin{enumerate}
		\item If $b>0$, 
		\begin{align*}
		\frac{1}{v_{2}r^{1+1/2}}\mathscr{T}_{r,v}^{i}(p;F,A,L) & \overset{\mathcal{F}\text{-}fd}{\longrightarrow}\int_{\partial A(p)}F(p-q)\cdot u_{A(p)}(q)W_{\mathcal{H}^{1}}(\mathrm{d}q);\\
		\frac{1}{v_{2}r^{1+1/2}}\mathscr{T}_{r,d}^{i}(\cdot;F,A,L) & \overset{\mathcal{F}\text{-}fd}{\longrightarrow}\int_{\partial A(p)}F(p-q)\cdot u_{A(p)}^{\perp}(q)W_{\mathcal{H}^{1}}(\mathrm{d}q);
		\end{align*}
		where $u_{A(p)}$ as in Assumption \ref{assumption2}, and $W_{\mathcal{H}^{1}}$
		as in Theorem \ref{gaussianattractor}.
		\item If $b=0$ and $\int_{\mathbb{R}}(1\land\left|x\right|)\nu(\mathrm{d}x)<\infty$,
		then as $r\downarrow0$
		\begin{align*}
		\frac{1}{\pi r^{2}}\mathscr{T}_{r,v}^{i}(p;A,L) & \overset{\mathbb{P}}{\rightarrow}(-1)^{i}\gamma_{0}\int_{\partial A(p)}F(p-q)\cdot u_{A(p)}(q)\mathcal{H}^{1}(\mathrm{d}q);\\
		\frac{1}{\pi r^{2}}\mathscr{T}_{r,d}^{i}(p;A,L) & \overset{\mathbb{P}}{\rightarrow}(-1)^{i}\gamma_{0}\int_{\partial A(p)}F(p-q)\cdot u_{A(p)}^{\perp}(q)\mathcal{H}^{1}(\mathrm{d}q);
		\end{align*}
		where $\gamma_{0}=-\int_{\left|x\right|\leq1}x\nu(\mathrm{d}x)$.
		\item If $L$ satisfies the assumptions of Theorem \ref{stableattractor},
		for some $2>\beta\geq1$, then, as $r\downarrow0$,
		\begin{align*}
		\frac{1}{v_{\beta}r^{1+1/\beta}}\mathscr{T}_{r,v}^{i}(p;F,A,L) & \overset{\mathcal{F}\text{-}fd}{\longrightarrow}(-1)^{i}\int_{\partial A(p)}F(p-q)\cdot u_{A(p)}(q)M_{\mathcal{H}^{1}}^{\beta}(\mathrm{d}q);\\
		\frac{1}{v_{\beta}r^{1+1/\beta}}\mathscr{T}_{r,d}^{i}(p;F,A,L) & \overset{\mathcal{F}\text{-}fd}{\longrightarrow}(-1)^{i}\int_{\partial A(p)}F(p-q)\cdot u_{A(p)}^{\perp}(q)M_{\mathcal{H}^{1}}^{\beta}(\mathrm{d}q);
		\end{align*}
		where $M_{\mathcal{H}^{1}}^{\beta}$ as in Theorem \ref{stableattractor}.
	\end{enumerate}
\end{theorem}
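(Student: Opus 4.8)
The plan is to prove everything at the level of (joint) cumulants, using that for fixed $r>0$ the quantity $\mathscr{T}_{r,d}^{i}(p;F,A,L)$ is a stochastic integral of a deterministic (in $q$) integrand against the independently scattered homogeneous basis $L$. Setting $\Phi_{r}^{i}(p,q):=\ointop_{r\mathbb{S}^{1}(q)}F(p-\cdot)\mathbf{1}_{B^{i}(p)}\cdot n\,\mathrm{d}s$, so that $\mathscr{T}_{r,d}^{i}(p;F,A,L)=\int_{(\partial A(p))_{\oplus r}}\Phi_{r}^{i}(p,q)\,L(\mathrm{d}q)$ (well-definedness being already part of the statement, and provable exactly as in Lemma~\ref{lemmadecomposition} via Lemma~\ref{Fubinilemma} together with the boundedness of $r^{-2}\Phi_r^i$ from Lemma~\ref{keylemmaasymptotics}), independent scattering gives $\mathcal{C}(z\ddagger\mathscr{T}_{r,d}^{i}(p))=\int_{\mathbb{R}^{2}}\psi(z\Phi_{r}^{i}(p,q))\,\mathrm{d}q$ with $\psi$ the characteristic exponent of $L$, and likewise for the joint cumulant of $(\mathscr{T}_{r,d}^{i}(p_{1}),\dots,\mathscr{T}_{r,d}^{i}(p_{m}))$. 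The whole proof reduces to computing the limit of these cumulants under the three normalizations $v_{2}r^{1+1/2}$, $\pi r^{2}$ and $v_{\beta}r^{1+1/\beta}$. Since nothing in the argument distinguishes $n$ from $n^{\perp}$, I would present only the $\mathscr{T}_{r,d}^{i}$ case.

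First I would carry out a geometric reduction of $\Phi_{r}^{i}$. The classical divergence theorem on $D_{r}(q)\cap B^{i}(p)$ gives $\Phi_{r}^{i}(p,q)=\int_{D_{r}(q)\cap B^{i}(p)}\nabla\cdot[F(p-\cdot)](y)\,\mathrm{d}y+(-1)^{i}\int_{\partial A(p)\cap D_{r}(q)}F(p-\cdot)\cdot u_{A(p)}\,\mathrm{d}s$, where the bulk term is $O(r^{2})$ uniformly in $q$. For the boundary term, the Lipschitz-regularity of $\partial A$ together with Lemma~\ref{keylemmaasymptotics} in Appendix~B show that for $q$ with $d_{\partial A(p)}(q)\le r$ the set $\partial A(p)\cap D_{r}(q)$ is, up to $\mathcal{H}^{1}$-measure $o(r)$, a chord of length $2\sqrt{r^{2}-d_{\partial A(p)}(q)^{2}}$ centered at the projection $q_{0}:=\Pi_{\partial A(p)}(q)$, so that $\Phi_{r}^{i}(p,q)=(-1)^{i}\,2\sqrt{r^{2}-d_{\partial A(p)}(q)^{2}}\,(F(p-q_{0})\cdot u_{A(p)}(q_{0}))+R_{r}(q)$ with $\sup_{q}|R_{r}(q)|=o(r)$. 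Next I would parametrize $(\partial A(p))_{\oplus r}$ by $(q_{0},s)\in\partial A(p)\times(-r,r)$ via $q=q_{0}+s\,u_{A(p)}(q_{0})$: here the Steiner-type formula of Appendix~A (Theorems~\ref{lemmadiffeo}--\ref{lemmadiffeo-1}) applies precisely because of the integrability hypothesis of Assumption~\ref{assumption2}, and yields that this map is $\mathcal{H}^{1}\otimes Leb$-a.e.\ bi-Lipschitz with Jacobian tending to $1$. After the substitution $s=rt$, this turns $\int_{\mathbb{R}^{2}}\lambda(\Phi_{r}^{i}(p,q))\,\mathrm{d}q$, for any bounded measurable $\lambda$ with $\lambda(0)=0$, into $r\int_{\partial A(p)}\int_{-1}^{1}\lambda((-1)^{i}2r\sqrt{1-t^{2}}\,a_{p}(q_{0}))\,\mathrm{d}t\,\mathcal{H}^{1}(\mathrm{d}q_{0})$ plus an error negligible after any of the three normalizations (because $\mathcal{H}^{1}(\partial A)<\infty$ and $Leb((\partial A)_{\oplus r})=O(r)$), where $a_{p}(q_{0}):=F(p-q_{0})\cdot u_{A(p)}(q_{0})$.

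It then remains to take $\lambda=\psi(z\,\cdot)$, suitably rescaled, and let $r\downarrow0$. For $b>0$ the $-\tfrac12 b^{2}z^{2}$-part of $\psi$ yields $v_{2}^{-2}r^{-3}\int\Phi_{r}^{i}(p,q)^{2}\mathrm{d}q\to\int_{\partial A(p)}a_{p}(q_{0})^{2}\,\mathcal{H}^{1}(\mathrm{d}q_{0})$, using $\int_{-1}^{1}4(1-t^{2})\,\mathrm{d}t=v_{2}^{2}$; the drift is $O(r^{1/2})$ and the jump term is killed by truncating at a level $\delta_{r}\downarrow0$ (e.g.\ $\delta_{r}=r^{1/4}$) and using $\int_{|x|\le\delta}x^{2}\nu(\mathrm{d}x)\to0$, so the limiting cumulant $-\tfrac12 b^{2}z^{2}\int_{\partial A(p)}a_{p}^{2}\,\mathrm{d}\mathcal{H}^{1}$ identifies the Gaussian line integral. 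For $b=0$ and $\int(1\wedge|x|)\nu<\infty$, write $\psi(z)=i\gamma_{0}z+\int(e^{izx}-1)\nu(\mathrm{d}x)$; with $\theta_{r}(q)=z\Phi_{r}^{i}(p,q)/\pi r^{2}=O(1/r)$ and the elementary estimate $\int_{|x|\le1}(e^{i\theta x}-1-i\theta x)\nu(\mathrm{d}x)=i\gamma_{0}\theta+o(|\theta|)$ as $|\theta|\to\infty$, the drift and the $i\gamma_{0}\theta$-term combine (via $\int_{-1}^{1}\tfrac{2}{\pi}\sqrt{1-t^{2}}\,\mathrm{d}t=1$) into the deterministic limit $(-1)^{i}\gamma_{0}\int_{\partial A(p)}a_{p}\,\mathrm{d}\mathcal{H}^{1}$, the $o(|\theta_{r}|)$-remainder integrates to $o(1/r)\cdot O(r)=o(1)$, and the big jumps are $o_{\mathbb{P}}(r^{2})$ by Proposition~\ref{propboundarycomppoisso}. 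For the stable regime, with $\lambda_{r}=(v_{\beta}r^{1+1/\beta})^{-1}$ one has $\lambda_{r}\Phi_{r}^{i}(p,q)=r^{-1/\beta}\mu$ with $\mu=v_{\beta}^{-1}(-1)^{i}2\sqrt{1-t^{2}}\,a_{p}(q_{0})+o(1)$, and the standard domain-of-attraction fact $r\,\psi(r^{-1/\beta}\mu)\to\psi_{\beta}(\mu)$ — $\psi_{\beta}$ the exponent of the strictly $\beta$-stable law with parameters $(K_{+},K_{-},\beta,\hat\gamma)$, $K_{\pm}=\beta\tilde K_{\pm}$ — together with $\psi_{\beta}(c\mu)=|c|^{\beta}\psi_{\beta}(\mathrm{sgn}(c)\mu)$ and $\int_{-1}^{1}(1-t^{2})^{\beta/2}\mathrm{d}t=(v_{\beta}/2)^{\beta}$ gives the limiting cumulant $\int_{\partial A(p)}\psi_{\beta}((-1)^{i}z\,a_{p}(q_{0}))\,\mathcal{H}^{1}(\mathrm{d}q_{0})$, i.e.\ the asserted strictly $\beta$-stable line integral; when $\beta=1$ the hypotheses $\tilde K_{+}=\tilde K_{-}$ and the existence of $\mathrm{PV}\int_{-1}^{1}x\nu(\mathrm{d}x)$ are exactly what licenses this passage and fixes $\hat\gamma=\gamma-\mathrm{PV}\int_{-1}^{1}x\nu(\mathrm{d}x)$. (Throughout, the precise justification of dominated passage to the limit inside the $\mathcal{H}^{1}$- and $t$-integrals uses the uniform bound $|\Phi_r^i|\le Cr$ and Theorem~\ref{DCTstcint}.)

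Finally I would upgrade weak to $\mathcal{F}$-stable convergence. For $r$ small all tubes $(\partial A(p_{k}))_{\oplus r}$ lie inside a fixed bounded set, so the vector $(\mathscr{T}_{r,d}^{i}(p_{k}))_{k}$ depends only on $L$ restricted to a shrinking region; approximating an arbitrary bounded $\mathcal{F}$-measurable $\eta$ by functionals of $L$ on sets eventually disjoint from all these tubes, independent scattering forces the joint characteristic function of the rescaled vector and $\eta$ to factorize in the limit, giving $\mathcal{F}$-stable convergence of the finite-dimensional distributions and, in the Gaussian and stable cases, a limit independent of $L$ (hence of $\mathcal{F}$). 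The statements for $\mathscr{T}_{r,v}^{i}$ follow verbatim with $n\,\mathrm{d}s$ replaced by $\mathrm{d}u$, and correspondingly $u_{A(p)}$ by $u_{A(p)}^{\perp}$ (Green's theorem in place of the divergence theorem). The two steps I expect to be genuinely delicate are the bi-Lipschitz change of variables on $(\partial A(p))_{\oplus r}$ together with its Jacobian control — this is exactly where the integrability condition of Assumption~\ref{assumption2} is indispensable, since without it $Leb((\partial A)_{\oplus r})$ need not even be $O(r)$ and the parametrization degenerates — and the uniform passage to the limit $r\,\psi(r^{-1/\beta}\,\cdot)\to\psi_{\beta}(\cdot)$ under the integral over $\partial A(p)$, in particular at the borderline exponent $\beta=1$ where both a genuinely stable term and a residual ``classical'' term coexist at the common rate $\pi r^{2}=v_{1}r^{2}$.
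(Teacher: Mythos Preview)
Your proposal is correct in substance and largely parallels the paper's proof, but two organizational choices differ. First, to obtain the pointwise asymptotic $\Phi_{r}^{i}(p,q)\sim(-1)^{i}2\sqrt{r^{2}-d_{\partial A(p)}(q)^{2}}\,a_{p}(q_{0})$ you apply the divergence theorem on $D_{r}(q)\cap B^{i}(p)$; the paper instead parametrizes the arc $r\mathbb{S}^{1}(q)\cap B^{i}$ directly (Lemma~\ref{keylemmaasymptotics}) and extracts the same limit. Your route is more transparent, but note that it formally requires $F(p-\cdot)$ to be $C^{1}$ near $\partial A(p)$, whereas Assumption~\ref{assumption2} only demands continuity; since you must eventually quote Lemma~\ref{keylemmaasymptotics} anyway for the uniform bound, you might as well take the asymptotic from there too and drop the extra hypothesis. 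Second, you take the scaling limit $r\psi(r^{-1/\beta}\cdot)\to\psi_{\beta}(\cdot)$ under the integral at the very end, treating the three regimes separately; the paper instead front-loads this step, invoking Lemma~\ref{keylemmaasymptotics-1} to replace $L$ by a genuinely strictly $\beta$-stable basis $L_{\beta}$ \emph{before} the Steiner computation, so that exact self-similarity $\psi_{\beta}(r^{-1/\beta}z)=r^{-1}\psi_{\beta}(z)$ collapses the three cases into a single dominated-convergence argument. Both orderings work; the paper's buys some economy, yours keeps the dependence on $(\gamma,b,\nu)$ visible throughout. Your stable-convergence step (asymptotic independence from $L(A)$ via shrinking tubes and independent scattering) is exactly the paper's argument.
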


\begin{proof}The proof is divided
	in two steps. First we show the convergence of the finite-dimensional
	distributions. After that, we conclude the proof by showing the stable
	convergence. 
	
	\textbf{Finite-dimensional convergence: }We start by observing that
	if $L$ has characteristic exponent $\psi$ then by Theorem 2 in \cite{IvanovJ17}
	as $r\downarrow0$
	\[
	r\psi(r^{-1/\beta}z)\rightarrow\psi_{\beta}(z)=\begin{cases}
	-\frac{1}{2}b^{2}z^{2} & \text{if }b>0\text{ and }\beta=2;\\
	i\gamma_{0}z & \text{if }b=0,\int_{\mathbb{R}}(1\land\left|x\right|)\nu(\mathrm{d}x)<\infty\text{ and }\beta=1;\\
	\psi_{K_{\pm},\beta,\hat{\gamma}}(z) & \text{under }3.\text{ and }1\leq\beta<2;
	\end{cases}
	\]
	where $\psi_{K_{\pm},\beta,\hat{\gamma}}$ denotes the characteristic
	exponent of a strictly $\beta$-stable distribution whose parameters
	$K_{\pm}$ and $\hat{\gamma}$ are defined as in Theorem \ref{stableattractor}.
	Therefore, by Lemma \ref{keylemmaasymptotics-1} in Appendix B
	\[
	\frac{1}{v_{\beta}r^{1+1/\beta}}\mathscr{T}_{r,d}^{i}(p;F,A,L)\overset{fd}{=}\frac{1}{v_{\beta}r^{1+1/\beta}}\mathscr{T}_{r,d}^{i}(p;F,A,L_{\beta})+o_{\mathbb{P}}(1),
	\]
	where $\overset{fd}{=}$ stand by equality of the finite-dimensional
	distributions, and $L_{\beta}$ is a homogeneous L\'evy basis with characteristic
	exponent $\psi_{\beta}.$ Hence, it is enough to show that the asymptotics
	in 1.-3. holds for $\mathscr{T}_{r,d}^{i}(\cdot;F,A,L_{\beta})$.
	For $i=1,2$ , $z\in\mathbb{R}$, $r\leq r_{0}^{i}$, put
	\[
	z_{r}^{i}(q):=(v_{\beta}r)^{-1}z\ointop_{r\mathbb{S}^1(q)}F(-\cdot)\mathbf{1}_{B^{i}}\cdot n\mathrm{d}x\mathbf{1}_{(\partial A)_{\oplus r}}(q).
	\]
	By relation (\ref{cumulantstochint}) in Appendix A, and the strict stability of $\psi_{\beta}$,
	we have that
	\[
	\mathcal{C}\left[z\ddagger\frac{1}{v_{\beta}r^{1+1/\beta}}\mathscr{T}_{r,d}^{i}(p;F,A,L_{\beta})\right]=\frac{1}{r}\left\{ \int_{A_{\oplus r}\backslash A}\psi_{\beta}(z_{r}^{i}(q))\mathrm{d}q+\int_{A_{\oplus r}^{*}\backslash A^{*}}\psi_{\beta}(z_{r}^{i}(q))\mathrm{d}q\right\} .
	\]
	For the notation involved below, we refer to the reader to Appendix B. We want to apply Theorems \ref{lemmadiffeo} and \ref{lemmadiffeo-1} to the previous relation. From Assumption \ref{assumption2},
	we have that $\mathcal{H}^1(\partial^{++}A\cap\partial^{++}A^{*}\backslash\partial A)=0$.
	Consequently, in this case Theorems \ref{lemmadiffeo} and \ref{lemmadiffeo-1} read as
	\begin{align}
	\frac{1}{r}\int_{A_{\oplus r}\backslash A}\psi_{\beta}(z_{r}^{i}(q))\mathrm{d}q & =\int_{\partial A}\int_{0}^{1}\psi_{\beta}\left[z_{r}^{i}(q+rsu_{A}(q))\right]\mathbf{1}_{\{\delta_A(q,u)>rs\}}\mathrm{d}s\mathcal{H}^{1}(\mathrm{d}q)\label{eq:charcttripl}\\
	+&\frac{w_{1}}{r}\int_{N(A)}\left[\int_{0}^{r}s\psi_{\beta}\left[z_{r}^{i}(q+su_{A}(q))\right]\mathbf{1}_{\{\delta_A(q,u)>s\}}\mathrm{d}s\right]\mu_{0}(A;\mathrm{d}(q,u)).\nonumber 
	\end{align}
	Lemma \ref{keylemmaasymptotics} in Appendix B, guarantees that for almost all $q\in\partial A$, as $r\downarrow0$
	\[
	z_{r}^{i}(q+rsu_{A}(q))\rightarrow v_{\beta}^{-1}z(-1)^{i}2\sqrt{1-s^{2}}F(q)\cdot u_{A}(q).
	\]
	and
	\[
	\int_{0}^{1}\int_{\partial A}\left\Vert \psi_{\beta}(v_{2}^{-2}z_{r,v}^{i}(s,q))\mathbf{1}_{\{\delta_A(q,u(q))>rs\}}\right\Vert \mathcal{H}^{1}(\mathrm{d}q)\mathrm{d}s\leq K2\mathcal{H}^{1}(\partial A)<\infty,
	\]
	for some constant $K>0$. Hence, the Dominated Convergence Theorem
	can be applied to get that as $r\downarrow0$
	\[
	\int_{\partial A}\int_{0}^{1}\psi_{\beta}\left[z_{r}^{i}(q+rsu_{A}(q))\right]\mathbf{1}_{\{\delta_A(q,u)>rs\}}\mathcal{H}^{1}(\mathrm{d}q)\mathrm{d}s\rightarrow\frac{1}{2}\int_{\partial A}\psi_{\beta}[(-1)^{i}F(q)\cdot u_{A}(q)]\mathcal{H}^{1}(\mathrm{d}q).
	\]
	We claim that the last integral in (\ref{eq:charcttripl}) vanishes
	when $r\downarrow0$. Indeed, as before we can choose $K>0$ such
	that
	\begin{align*}
	\left|\frac{1}{r}\int_{N(A)}\int_{0}^{r}s\psi_{\beta}\left[z_{r}^{i}(q+su_{A}(q))\right]\mathbf{1}_{\{\delta_A(q,u)>s\}}\mathrm{d}s\mu_{0}(A;\mathrm{d}(q,u))\right|&\leq \frac{K}{r}\int_{N(A)}\int_{0}^{r}s\mathbf{1}_{\{\delta_A(q,u)>s\}}\\
	\times&\mathrm{d}s\left|\mu_{0}\right|(A;\mathrm{d}(q,u)).
	\end{align*}
	On the other hand, for any $r\leq1$
	\[
	\frac{1}{r}\int_{0}^{r}s\mathbf{1}_{\{\delta_A(q,u)>s\}}\mathrm{d}s\leq [1\wedge\delta_A(q,u)]^2+1\wedge\delta_A(q,u).
	\]
	Our claim then follows by the integrability condition in Assumption \ref{assumption2}, Theorem \ref{lemmadiffeo} in the appendix,
	and the Dominated Convergence Theorem. In a similar way it is possible
	to verify that as $r\downarrow0$
	\[
	\frac{1}{r}\int_{A_{\oplus r}^{*}\backslash A^{*}}\psi_{\beta}(z_{r}^{i}(q))\mathrm{d}q\rightarrow\frac{1}{2}\int_{\partial A}\psi_{\beta}[(-1)^{i}F(q)\cdot u_{A}(q)]\mathcal{H}^{1}(\mathrm{d}q).
	\]
	All above give us the pointwise convergence in 1.-3. The finite-dimensional
	convergence can be shown using similar arguments as above as well
	as an application of the Cram\'er\textendash Wold methodology and the Inclusion-Exclusion
	principle.
	
	\textbf{Stable convergence: }To avoid extra notation we only show
	that the stable convergence holds when $b>0$. For the other cases a similar
	argument can be applied.
	
	Let $A$ be a bounded Borel set. Since for $i=1,2$, $\mathscr{T}_{r,d}^{i}(\cdot;F,A,L)$
	is $\mathcal{F}_{L}$-measurable and thanks to Theorem 3.2 in \cite{HauslerLuschgy15},
	it is sufficient for the $\mathcal{F}$-stable convergence of the
	finite-dimensional distributions of $\{\frac{1}{v_{2}r^{1+1/2}}\mathscr{T}_{r,d}^{i}(p;F,C,L)\}_{p\in\mathbb{R}^{2}}$
	to show that for any $p_{1},\ldots,p_{m}$, $	(\{\frac{1}{v_{\beta}r^{1+1/}}\mathscr{T}_{r,d}^{i}(p_{j};F,A,L)\}_{j=1}^{m},L(A))$ converges in distribution towards
	\[
	\left(\{\int_{\partial A+p_{j}}F(p_{j}-q)\cdot u_{A(p_{j})}(q)W_{\mathcal{H}^{1}}(\mathrm{d}q)\}_{j=1}^{m},L(A)\right).
	\]
	For $z=(z_{1},\ldots,z_{m+1})\in\mathbb{R}^{m+1}$, let $\mathcal{C}\left[z\ddagger\{\frac{1}{v_{2}r^{1+1/2}}\mathscr{T}_{r,d}^{i}(p_{j};F,A,L)\}_{j=1}^{m},L(A)\right]$
	be the characteristic exponent of the random vector $(\{\frac{1}{v_{2}r^{1+1/2}}\mathscr{T}_{r,d}^{i}(p_{j};F,A,L)\}_{j=1}^{m},L(A))$.
	For the former, we are going to show that for $z\neq0$, it converges as $r\downarrow0$ to
	\begin{equation}
	\mathcal{C}\left(z\ddagger\{\int_{\partial A+p_{j}}F(p_{j}-q)\cdot u_{A(p_{j})}(q)W_{\mathcal{H}^{1}}(\mathrm{d}q)\}_{j=1}^{m}\right)
	+\mathcal{C}(z_{m+1}\ddagger L(A)).\label{stableconvergenceGaussian}
	\end{equation}
	Let $\mathcal{A}_{m,r}=\cup_{i=1}^{m}(\partial A+p_{j})_{\oplus r}$.
	If $A\cap(\partial A+p_{j})=\emptyset$, for any $i=1,\ldots,m$,
	then for $r$ small enough $\mathcal{A}_{m,r}\cap A=\emptyset$ and
	(\ref{stableconvergenceGaussian}) follows by independently scattered
	property of $L$. Suppose then that $\mathcal{A}_{m,r}\cap A\neq\emptyset.$
	Then, almost surely 
	\begin{align*}
	L(A) & =L(A\cap\mathcal{A}_{m,r})+L(A\backslash\mathcal{A}_{m,r}).
	\end{align*}
	Once again, in view that $L$ is independently scattered, we get that \linebreak $\mathcal{C}\left[z\ddagger\{\frac{1}{v_{2}r^{1+1/2}}\mathscr{T}_{r,d}^{i}(p_{j};F,A,L)\}_{j=1}^{m},L(A)\right]$ equals
	\begin{align*}
	\log\mathbb{E}\left\{ i\left[\sum_{j=1}^{m}z_{i}\frac{1}{v_{2}r^{1+1/2}}\mathscr{T}_{r,d}^{i}(p_{j};F,C,L)+z_{m+1}L(A\cap\mathcal{A}_{m,r})\right]\right\} 
	+\mathcal{C}(z_{m+1}\ddagger L(A\backslash\mathcal{A}_{m,r})).
	\end{align*}
	Since $L(A\cap\mathcal{A}_{m,r})\overset{\mathbb{P}}{\rightarrow}0$
	and $L(A\backslash\mathcal{A}_{m,r})\overset{\mathbb{P}}{\rightarrow}L(A)$,
	equation (\ref{stableconvergenceGaussian}) follows by the previous
	relation and Slutsky's Theorem.\end{proof}

\subsection{Proof of Theorems \ref{gaussianattractor}-\ref{stableattractor} }

In this part we present the proof of Theorems \ref{gaussianattractor}-\ref{stableattractor}
by combining the results obtained in the previous subsections. 

\begin{proof} Firstly observe that in general, if $L$ has characteristic triplet $(\gamma,b,\nu)$, then from (\ref{DandCnodrift}), Lemma \ref{lemmadecomposition} and the L\'evy-Ito\^o decomposition, we get that a.s, 
	\begin{equation}
	\mathscr{D}_{r}(p;X)=\partial\mathscr{D}_{r}(p;\tilde{X})+\mathring{\mathscr{D}}_{r}(p;\tilde{X}),\label{decompositionproofmainthm}
	\end{equation}	
	where 
	\[
	\tilde{X}(p)=\int_{\mathcal{R}+p}F(p-q)\tilde{L}(\mathrm{d}q),
	\]
	and $\tilde{L}$ as in Theorem \ref{divergenceandvorticitytheorem}.	
	Assume now that $\left.F\right|_{-\partial\mathcal{R}}=0$. In this case, by Corollary \ref{corFvanishboundary},
	we have that
	\begin{equation}
	\mathscr{D}_{r}(p;X)=\mathring{\mathscr{D}}_{r}(p;\tilde{X})+o_{\mathbb{P}}(r^2)\label{classicalrate}.
	\end{equation}
	The convergence in Theorem \ref{divergenceandvorticitytheorem} follows immediately from this and Proposition \ref{asymptoticsint}. Suppose now that $\left.F\right|_{-\partial\mathcal{R}}\neq0$. If $b=0$ and $\int_{\mathbb{R}}(1\land\left|x\right|)\nu(\mathrm{d}x)<\infty$, by equation (\ref{decompositionTauD}) and Theorem \ref{Tauasymptotics}, we still get that relation (\ref{classicalrate}) holds in this case and subsequently to the conclusion of Theorem \ref{divergenceandvorticitytheorem}.
	Now suppose that $b>0$. Similar arguments used in the proof of Theorem \ref{Tauasymptotics}
	\[
	\mathscr{D}_{r}(p;X)\overset{fd}{=}\partial\mathscr{D}_{r}(p;\tilde{X}^W)+o_{\mathbb{P}}(r^{1+1/2}),
	\]
	where $\tilde{X}^W$ is defined as $\tilde{X}$ but we replace $\tilde{L}$ by a Gaussian L\'evy basis with variance $b^2$. Another application of (\ref{decompositionTauD}) and Theorem \ref{Tauasymptotics} conclude the proof of Theorem \ref{gaussianattractor}.
	Finally, let the assumptions of Theorem \ref{stableattractor} hold. Analogously as the preeceding argument, we deduce that if $1<\beta<2$
	\[
	\mathscr{D}_{r}(p;X)\overset{fd}{=}\partial\mathscr{D}_{r}(p;\tilde{X}^\beta)+o_{\mathbb{P}}(r^{1+1/\beta}),
	\]
	while for $1=\beta$
	\[
	\mathscr{D}_{r}(p;X)\overset{fd}{=}\partial\mathscr{D}_{r}(p;\tilde{X}^\beta)+\mathring{\mathscr{D}}_{r}(p;\tilde{X})+o_{\mathbb{P}}(r^2),
	\]
	where $\tilde{X}^W$ is defined as $\tilde{X}$ but  $\tilde{L}$ is substituted by a strictly $\beta$-stable L\'evy basis with the parameters given in Theorem \ref{stableattractor}. The conclusions of Theorem \ref{stableattractor} then follows from this approximation and Theorem \ref{Tauasymptotics}. \end{proof}

\subsection{Proof of Theorem \ref{thmdivvortambit}}

As an application of Theorem \ref{divergenceandvorticitytheorem}
and Proposition \ref{convstchintambittype}  in
Appendix A, we present a proof for
Theorem \ref{thmdivvortambit}.

\begin{proof}
	Note first that from Proposition \ref{propisotropyambit} and Remark
	\ref{remarkgenambitfieldsproperties}, we can find a measurable modification of $Y$, which will be also
	denoted by $Y$, satisfying that 
	\[
	\mathbb{E}\left[\int_{0}^{2\pi}\left| Y(p+ru(\theta))\cdot u(\theta) \right|\mathrm{d}\theta\right]\leq2\pi\mathbb{E}[\left\Vert Y(0)\right\Vert ]<\infty,
	\]
	meaning that the field $\mathscr{D}_{r}(p;Y)$ is well defined for
	any $p\in\mathbb{R}^{2}$ and $r>0$. The key step for the rest of
	the proof consists in showing that 
	\begin{equation}
	\left\{ \mathscr{D}_{r}(p;Y)\right\} _{p\in\mathbb{R}^{2}}\overset{d}{=}\{\mathring{\mathscr{D}}_{r}(p;Y)+\partial\mathscr{D}_{r}(p;Y)\}_{p\in\mathbb{R}^{2}},\label{fubiniversion}
	\end{equation}
	where $\mathring{\mathscr{D}}_{r}(p;Y)$ and $\partial\mathscr{D}_{r}(p;Y)$
	are defined as in (\ref{lemmadecomposition}) when we replace $L(\mathrm{d}q)$
	by $V(q)L(\mathrm{d}q)$. Indeed, if this were true, under our assumptions, Proposition \ref{convstchintambittype} in
 Appendix A
	can be applied to deduce that the limit in probability of $(\pi r^{2})^{-1}\mathscr{D}_{r}(p;Y)$
	exists and it is the same as the one of $(\pi r^{2})^{-1}\mathring{\mathscr{D}}_{r}(p;X)$ when
	we replace $L(\mathrm{d}q)$ by $V(q)L(\mathrm{d}q)$,
	which would complete the proof. 
	
	To show (\ref{fubiniversion}), first observe that due to the stationarity
	of $V$ (as well as its square integrability), the conditions of Lemma
	\ref{Fubinilemma} in
	Appendix B are satisfied almost surely for $K=[0,2\pi]$,
	$\varphi(\theta)=p+ru(\theta)$, $f_{\mathscr{D}}(\varphi(\theta),q):=\mathbf{1}_{\mathcal{R}+p} F(\varphi(\theta)-q)\cdot u(\theta) V(q)$
	and $\left|D\varphi(\theta)\right|=r>0$. Consequently, conditioned
	to $V$, for any $p\in\mathbb{R}^{2}$ a.s. 
	\[
	\mathscr{D}_{r}(p;Y)\left|V\right.=\mathring{\mathscr{D}}_{r}(p;Y)\left|V\right.+\partial\mathscr{D}_{r}(p;Y)\left|V\right..
	\]
	Relation (\ref{fubiniversion}) then follows immediately from this.\end{proof}

\begin{remark}Unlike in the case when $V$ is identically constant, $\mathscr{D}_{r}(p;Y)$ does not vanishes when $L$ is deterministic. This is the reason why the limiting fields in Theorem \ref{divergenceandvorticitytheorem} include the whole $L$ and not $\tilde{L}$ as in Theorem \ref{divergenceandvorticitytheorem}. Finally we note that Theorem \ref{thmdivvortambit} remains valid in the case when (\ref{bigjumpsVL}) is replaced by $\int_{0}^{2\pi}\left| Y(p+ru(\theta))\cdot u(\theta)\right|\mathrm{d}\theta<\infty$,
	almost surely.
	
\end{remark}

\subsection{Final remarks and generalizations\label{subsec:Final-remarks-and}}

To conclude this section we further discuss other possible asymptotic
rates for $\mathscr{D}_{r}(p;X)$. We
also briefly discuss some generalizations on the ambit set. First
observe that the only possible limit fields for $\mathscr{D}_{r}(p;X)$
are the one appearing in Theorems \ref{gaussianattractor}-\ref{stableattractor}.
Indeed, we have established in Proposition \ref{asymptoticsint},
that in general
\[ 
(\pi r^{2})^{-1}\mathscr{\mathring{D}}_{r}(p;X)\overset{\mathbb{P}}{\rightarrow} \int_{\mathcal{R}+p}\nabla\cdot F(p-q)L(dq).
\]
Thus, in the framework of Assumption \ref{assumptionambitset1} ($n=1$)
let $a_{r}\rightarrow0$ as $r\downarrow0$. Then 
\[
\mathcal{C}\left[z\ddagger\frac{\partial\mathscr{D}_{r}(p;X)}{ra(r)}\right]=\frac{1}{r}\int_{(\partial A)_{\oplus r}}r\psi(a_{r}^{-1}z_{r}^{i}(q))\mathrm{d}s\mathrm{d}q,
\]
where $z_{r}^{i}(q)$ is as in the proof of Theorem \ref{Tauasymptotics}.
We have shown that $r^{-1}Leb((\partial A)_{\oplus r})$
and $z_{r}^{i}(q)$ are uniformly bounded. Thus, we deduce that the
sequence $\frac{\partial\mathscr{D}_{r}(p;X)}{ra(r)}$ is bounded
in probability whenever $r\psi(a_{r}^{-1}\cdot)$ is. This is achieved
for instance when $r\psi(a_{r}^{-1}\cdot)$ is convergent, which according
to \cite{IvanovJ17}, occurs if and only if $a_{r}=r^{1/\beta}l_{r}$,
for some $0<\beta\leq2$ and a slowly varying function $l_{r}$ at
$0$. In this framework, Lemma \ref{keylemmaasymptotics-1} in
Appendix B remains
valid, leading this to the conclusion that Theorems \ref{gaussianattractor}-\ref{stableattractor}
still hold when we replace $r^{1/\beta}$ by $a_{r}$.

Let us now discuss some feasible generalizations that can be considered
for further research. Our proofs are based on two key results, namely Theorem \ref{lemmadiffeo} and
Lemma \ref{keylemmaasymptotics},  in Appendix A and B, respectively. The
former deals with the asymptotic behaviour of the mapping $q\mapsto\ointop_{r\mathbb{S}^1(q)}F\mathbf{1}_{B^{i}}\cdot n\mathrm{d}x$
on $(\partial A)_{\oplus r}$ when $r\downarrow0$ which is in general extremely
dependent on the geometry/smoothness of $\partial A$. Thus, a natural
generalization of our framework is to consider non-smooth
curves, e.g. fractals. Note that assumption \ref{assumptionambitset1} implies in particular that
the limit $\lim_{r\downarrow0}r^{-1}Leb((\partial A)_{\oplus r})$
exists and is finite. The later property is called 1-dimensional {\it  Minskowki measurability}.
More generally, a set $A\subseteq\mathbb{R}^{d}$ is said to be $s$-dimensional
Minkowski measurable if there is $s\leq d$ such that the limit $\lim_{r\downarrow0}r^{-(d-s)}Leb(A{}_{\oplus r})$
exists, is finite and different from zero. Therefore, if $\partial A$ is $s$-dimensional Minkowski measurable, then
we can deduce 
\[
\partial\mathscr{D}_{r}(p;X)=O_{\mathbb{P}}(r^{1+\frac{d-s}{\beta}}).
\]
Minkowki measurability holds for a big
class of self similar curves (see \cite{Gatz00}). The main challenge
in this framework relies on the identification of the limit (in case
it exists) of $\ointop_{r\mathbb{S}^1(q)}F(-\cdot)\mathbf{1}_{B^{i}}\cdot n\mathrm{d}x\mathbf{1}_{(\partial A)_{\oplus r}}$.



\section*{Acknowledgements}
The author wishes to thank Ole E. Barndorff-Nielsen and J\"urgen Schmiegel
for fruitful ideas that inspired this paper. The class of models introduced
in Section 4 came out from private discussions with them. The author
is also grateful to Gerardo Gonz\'alez and Markus Kiderlen for helpful
comments on a previous draft of this paper.

\section*{Appendix A}

For a self-contained presentation, we present in this appendix some
results related to the stochastic integration with respect to L\'evy
bases and formulas for the parallel sets of a closed set known as
Steiner formula.

\subsection*{A Steiner-type formula for closed setss}
This appendix gives a Steiner formula for closed sets in terms of the so-called
{\it reach measures} of $A$. We refer to \cite{HugGuntWel04} for more details. Such Steiner formula reads as follow:
\begin{theorem}\label{lemmadiffeo}For any non-empty closed set $A\subseteq\mathbb{R}^{d},$
	there exist uniquely determined reach measures $\mu_{0}(A;\cdot),\ldots,\mu_{0}(A;\cdot)$
	defined on $N(A)$ satisfying that for all $j=0,\ldots,d-1$, $r>0$
	and any compact set $B\subseteq\mathbb{R}^{d}$ 
	\[
	\int_{N(A)}\mathbf{1}_{B}(x)(r\wedge\delta_A(q,u))^{d-j}\left|\mu_{j}\right|(A;\mathrm{d}(q,u))<\infty.
	\]
	Moreover, if $w_{j}$ denotes the surface area of $\in\mathbb{S}^{j}$ and  $f:\mathbb{R}^{d}\rightarrow\mathbb{R}$
	is measurable with compact support, it holds
	\[ 
	\int_{A_{\oplus r}\backslash A}f(x)\mathrm{d}x  =\sum_{j=0}^{d-1}w_{d-i}\int_{N(A)}\int_{0}^{r}s^{d-1-i}\mathbf{1}_{\{\delta_A(q,u)>t\}} \times f(q+su)\mathrm{d}s\mu_{j}(A;\mathrm{d}(q,u)). \]
\end{theorem}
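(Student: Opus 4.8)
The plan is to follow the route of \cite{HugGuntWel04}: restrict to the points of $\mathbb{R}^{d}$ having a unique nearest point in $A$, parametrize them by the normal bundle $N(A)$, and apply the area formula. Two classical facts are used at the outset. First, $\mathbb{R}^{d}\setminus\mathrm{Unp}(A)$ is Lebesgue-null, since it is contained in the set where the $1$-Lipschitz function $d_{A}$ fails to be differentiable; hence $\int_{A_{\oplus r}\setminus A}f\,dx=\int_{(\mathrm{Unp}(A)\setminus A)\cap(A_{\oplus r}\setminus A)}f\,dx$. Second, the map $x\mapsto\bigl(\Pi_{A}(x),\,(x-\Pi_{A}(x))/\|x-\Pi_{A}(x)\|,\,d_{A}(x)\bigr)$ is a bijection of $\mathrm{Unp}(A)\setminus A$ onto $\{(q,u,s):(q,u)\in N(A),\ 0<s<\delta_{A}(q,u)\}$, with inverse $T(q,u,s)=q+su$, because $q+su$ admits $q$ as its unique nearest point precisely for $s<\delta_{A}(q,u)$, the excluded endpoint $s=\delta_{A}(q,u)$ sweeping out only a Lebesgue-null set.

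Next I would invoke the fact (Federer \cite{Federer59} for sets of positive reach, and \cite{HugGuntWel04} in general) that $N(A)\subset\mathbb{R}^{d}\times\mathbb{S}^{d-1}$ is countably $(d-1)$-rectifiable, so that for $\mathcal{H}^{d-1}$-a.e. $(q,u)\in N(A)$ there are generalized principal curvatures $\kappa_{1}(q,u)\le\cdots\le\kappa_{d-1}(q,u)\in(-\infty,\infty]$. Writing $T$ in a rectifiable chart of $N(A)$ and computing its tangential differential, the area formula yields
\[
\int_{\mathrm{Unp}(A)\setminus A}f\,dx=\int_{N(A)}\int_{0}^{\infty}\mathbf{1}_{\{\delta_{A}(q,u)>s\}}\,J(q,u,s)\,f(q+su)\,ds\,\mathcal{H}^{d-1}(d(q,u)),
\]
where the Jacobian factors as $J(q,u,s)=\prod_{i=1}^{d-1}j_{i}(s)$ with $j_{i}(s)=(1+s\kappa_{i})/\sqrt{1+\kappa_{i}^{2}}$ when $\kappa_{i}<\infty$ and $j_{i}(s)=s$ when $\kappa_{i}=\infty$. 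Establishing this factorization — in particular that the limiting factor at an infinite curvature is exactly $s$ — is the technical heart of the argument and, I expect, the main obstacle; it rests on the detailed analysis of the approximate tangent spaces of $N(A)$ carried out in \cite{HugGuntWel04}.

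It then remains to organize the right-hand side. Expanding $J(q,u,s)=\sum_{j=0}^{d-1}c_{j}(q,u)\,s^{\,d-1-j}$, so that $c_{j}$ is (up to the factors $(1+\kappa_{i}^{2})^{-1/2}$) the $j$-th elementary symmetric function of the curvatures, one simply \emph{defines} $\mu_{j}(A;d(q,u)):=w_{d-j}^{-1}c_{j}(q,u)\,\mathcal{H}^{d-1}(d(q,u))$; substituting the expansion into the displayed identity gives exactly the asserted Steiner formula. For the integrability claim, test against $f=\mathbf{1}_{B}$ with $B$ compact: since $\int_{0}^{r\wedge\delta_{A}(q,u)}s^{\,d-1-j}\,ds$ is comparable to $(r\wedge\delta_{A}(q,u))^{d-j}$ and the identity holds simultaneously for $d$ distinct radii, a Vandermonde argument in $r$ separates the contributions of the different powers of $s$ and forces each $|c_{j}|\,\mathbf{1}_{B}$ to be $\mathcal{H}^{d-1}$-integrable, i.e. $\int_{N(A)}\mathbf{1}_{B}\,(r\wedge\delta_{A})^{d-j}\,|\mu_{j}|(A;d(q,u))<\infty$. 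Uniqueness follows the same way: if two families satisfy the formula, differentiating in $r$ and using that $(q,u)\mapsto q+ru$ is injective on $\{\delta_{A}>r\}$ yields $\sum_{j}w_{d-j}\,r^{\,d-1-j}(\mu_{j}-\mu_{j}')|_{\{\delta_{A}>r\}}=0$ for a.e. $r$; running $r$ over $d$ values and inverting the Vandermonde matrix, then letting $r\downarrow0$, gives $\mu_{j}=\mu_{j}'$ on $\{\delta_{A}>0\}$, which is all that contributes to the formula.
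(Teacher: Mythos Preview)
The paper does not prove this theorem at all: it is stated in Appendix~A as a citation of \cite{HugGuntWel04}, with the sentence ``We refer to \cite{HugGuntWel04} for more details'' standing in for a proof. Your proposal is precisely a sketch of the argument from that reference --- parametrizing $\mathrm{Unp}(A)\setminus A$ via the normal bundle, invoking countable $(d-1)$-rectifiability of $N(A)$ and the associated generalized principal curvatures, applying the area formula to obtain the Jacobian $\prod_i j_i(s)$, expanding in powers of $s$ to define the $\mu_j$, and deducing integrability and uniqueness via a Vandermonde argument --- so there is nothing to compare: you have supplied exactly the proof the paper outsources.
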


The reach measures $\mu_{d-1}(A;)$ and $\mu_{0}(A;)$ can be written
in an explicit way. To do this, we introduce some extra notation.
The {\it positive boundary} of $A$ is defined as
\[
\partial^{+}A:=\{q\in\partial A:(q,u)\in N(A)\},
\]
and setting $N(A,q):=\{u\in\mathbb{S}^{d-1}:(q,u)\in N(A)\}$, we write 
\[
\partial^{++}A:=\{q\in\partial^{+}A:N(A,q)=\{u(q)\} \text{ or } N(A,q)=\{u(q),-u(q)\};u(q)\in\mathbb{S}^{d-1}\},
\]

\begin{theorem}\label{lemmadiffeo-1}For any non-empty closed set
	$A\subseteq\mathbb{R}^{d},$ it holds that for any measurable and
	bounded function $g:N(A)\rightarrow\mathbb{R}$ with compact support,
	\begin{align*}
	\int_{N(A)}g(q,u)\mu_{d-1}(A;\mathrm{d}(q,u)) & =\frac{1}{2}\int_{\partial^{++}A}\sum_{u: (u,q)\in N(A)}g(q,u)\mathcal{H}^{d-1}(\mathrm{d}q);\\
	w_d\int_{N(A)}g(q,u)\mu_{0}(A;\mathrm{d}(q,u)) & =\int_{\mathbb{S}^{d-1}}\sum_{x:(u,q)\in N(A)}g(q,u)(-1)^{j_A(q,u)}\mathcal{H}^{d-1}(\mathrm{d}u),
	\end{align*}
	for a measurable function taking values in $\{0,1,\ldots d-1\}$.
\end{theorem}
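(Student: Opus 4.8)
The plan is to exhibit explicit candidates for the reach measures and identify them with $\mu_{0}(A;\cdot),\dots,\mu_{d-1}(A;\cdot)$ via the uniqueness asserted in Theorem \ref{lemmadiffeo}, the crucial ingredient being a direct evaluation of the left-hand side of the Steiner formula by the area formula for the normal map $\psi(q,u,t):=q+tu$.

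First I would use that $\mathbb{R}^{d}\setminus\mathrm{Unp}(A)$ is Lebesgue-null and that $\psi$ is a bijection from $\{(q,u,t):(q,u)\in N(A),\ 0<t<\delta_{A}(q,u)\}$ onto $\mathrm{Unp}(A)\setminus A$. Since $N(A)$ is a countably $(d-1)$-rectifiable subset of $\mathbb{R}^{d}\times\mathbb{S}^{d-1}$, the set $N(A)\times(0,\infty)$ carries a $d$-dimensional rectifiable structure, and the area formula for $\psi$ gives, for every measurable $f$ of compact support,
\[
\int_{A_{\oplus r}\setminus A}f\,\mathrm{d}x=\int_{N(A)}\int_{0}^{r}\mathbf{1}_{\{\delta_{A}(q,u)>t\}}\,f(q+tu)\,J\psi(q,u,t)\,\mathrm{d}t\,\mathcal{H}^{d-1}(\mathrm{d}(q,u)).
\]
Here one invokes the structure theory of curvature measures (Z\"ahle, Rataj--Z\"ahle, Hug and collaborators; see the references in \cite{HugGuntWel04}): at $\mathcal{H}^{d-1}$-a.e.\ $(q,u)\in N(A)$ the approximate tangent space of $N(A)$ is spanned by $(\tau_{i},\kappa_{i}\tau_{i})$, $i=1,\dots,d-1$, where $\tau_{1},\dots,\tau_{d-1}$ is an orthonormal basis of $u^{\perp}$ and $\kappa_{1},\dots,\kappa_{d-1}\in(-\infty,\infty]$ are the generalized principal curvatures of $A$ at $(q,u)$, finite off an $\mathcal{H}^{d-1}$-null set; a short linear-algebra computation then gives $J\psi(q,u,t)=\prod_{i}(1+t\kappa_{i})(1+\kappa_{i}^{2})^{-1/2}$ on $0<t<\delta_{A}(q,u)$, all factors positive.

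Writing $\prod_{i}(1+t\kappa_{i})=\sum_{k=0}^{d-1}t^{k}e_{k}(\kappa)$ with $e_{k}$ the $k$-th elementary symmetric polynomial, I would define, for $j=0,\dots,d-1$, the signed measure $\tilde{\mu}_{j}$ on $N(A)$ with $\mathcal{H}^{d-1}$-density $w_{d-j}^{-1}\,e_{d-1-j}(\kappa)\prod_{i}(1+\kappa_{i}^{2})^{-1/2}$; its density is bounded, which with the weight $(r\wedge\delta_{A})^{d-j}$ furnishes the integrability demanded in Theorem \ref{lemmadiffeo}. Substituting the expansion into the displayed identity shows the $\tilde{\mu}_{j}$ satisfy the Steiner formula, so by uniqueness $\mu_{j}(A;\cdot)=\tilde{\mu}_{j}(A;\cdot)$. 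For $j=d-1$ one has $e_{0}\equiv1$, so $\mu_{d-1}(A;\cdot)=w_{1}^{-1}\prod_{i}(1+\kappa_{i}^{2})^{-1/2}\,\mathcal{H}^{d-1}$ on $N(A)$; applying the area formula to the projection $(q,u)\mapsto q$ restricted to $N(A)$, whose Jacobian equals $\prod_{i}(1+\kappa_{i}^{2})^{-1/2}$ and whose fibre over $\mathcal{H}^{d-1}$-a.a.\ $q\in\partial^{++}A$ is $\{u:(q,u)\in N(A)\}$, the curvature factors cancel and the first identity drops out with $w_{1}^{-1}=\tfrac12$. For $j=0$ one has $e_{d-1}(\kappa)=\prod_{i}\kappa_{i}$, and applying the area formula to the generalized Gauss map $\gamma(q,u):=u$ on $N(A)$, whose Jacobian is $\prod_{i}|\kappa_{i}|(1+\kappa_{i}^{2})^{-1/2}$, the curvature factors cancel up to the sign $\prod_{i}\kappa_{i}/\prod_{i}|\kappa_{i}|=(-1)^{j_{A}(q,u)}$ with $j_{A}(q,u):=\#\{i:\kappa_{i}(q,u)<0\}\in\{0,\dots,d-1\}$, which is the second identity; the $\mathcal{H}^{d-1}$-null set on which some $\kappa_{i}=+\infty$ contributes nothing to the Steiner integral and is discarded.

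The step I expect to be the main obstacle is the geometric-measure-theoretic input for a genuinely arbitrary closed set: the $(d-1)$-rectifiability of $N(A)$, the existence and measurability of the generalized principal curvatures $\kappa_{i}$, and the Jacobian identity $J\psi=\prod_{i}(1+t\kappa_{i})(1+\kappa_{i}^{2})^{-1/2}$; once these are in hand, the rest is bookkeeping with the area formula for the two coordinate projections.
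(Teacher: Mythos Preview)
The paper does not actually supply a proof of this theorem: it is stated in Appendix A as a known result, with a blanket reference to \cite{HugGuntWel04} for details. Your sketch is essentially the strategy carried out in that reference---build candidate measures from the generalized principal curvatures via the area formula for $(q,u,t)\mapsto q+tu$, invoke uniqueness from the Steiner formula, and then specialize to $j=d-1$ and $j=0$ using the area formula for the coordinate projections $(q,u)\mapsto q$ and $(q,u)\mapsto u$---so there is nothing to compare against in the paper itself, and your route is the standard one.

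One genuine correction, though: your assertion that the generalized curvatures $\kappa_i$ are ``finite off an $\mathcal{H}^{d-1}$-null set'' is false for arbitrary closed $A$. Already for $A=\{0\}\subset\mathbb{R}^{d}$ one has $N(A)=\{0\}\times\mathbb{S}^{d-1}$ with all $\kappa_i=+\infty$ on a set of full $\mathcal{H}^{d-1}$-measure, and more generally any corner of $\partial A$ produces a positive-measure piece of $N(A)$ on which some $\kappa_i=+\infty$. These points contribute nothing to the $\mu_{d-1}$ identity (the projection to $q$ has Jacobian zero there, consistent with the restriction to $\partial^{++}A$), but they \emph{do} contribute to the $\mu_0$ identity and cannot be discarded. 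The fix is to work throughout with the trigonometric parametrization of the approximate tangent space, i.e.\ basis vectors $(\cos\theta_i\,\tau_i,\sin\theta_i\,\tau_i)$ with $\theta_i\in(-\pi/2,\pi/2]$, so that the Jacobian of $\psi$ is $\prod_i(\cos\theta_i+t\sin\theta_i)$, the density of $\mu_0$ is $w_d^{-1}\prod_i\sin\theta_i$, the Gauss map Jacobian is $\prod_i|\sin\theta_i|$, and $j_A(q,u)=\#\{i:\sin\theta_i<0\}=\#\{i:\kappa_i<0\}$ is well defined even when some $\kappa_i=+\infty$. With that adjustment your argument goes through; the geometric-measure-theoretic inputs you flag (rectifiability of $N(A)$, existence of the $\theta_i$, the Jacobian computation) are exactly the substantive content supplied by \cite{HugGuntWel04}.
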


\subsection*{Limits for sequences of stochastic integrals w.r.t. L\'evy bases.}
Below we will present some results concerning the existence and the
convergence of sequences of stochastic integrals with respect to L\'evy
bases. We refer the reader to \cite{RajputRosinski89}, \cite{BasseGravPed13},
and \cite{ChongKlupp15}. Fix a filtered probability space $(\Omega,\mathcal{F},(\mathcal{F}_{t})_{t\in\mathbb{R}},\mathbb{P})$
satisfying the usual conditions.
Recall that a function $\tau:\mathbb{R}^{m}\rightarrow\mathbb{R}^{m}$
is said to be a truncation function if it is bounded and $\tau(x)=x$
in a neighborhood of $0$. Denote by $\mathcal{B}_{b}\left(\mathbb{R\times R}^{d}\right)$
the bounded Borel sets on $\mathbb{R\times R}^{d}$. Let $\left(L\left(A\right):A\in\mathcal{B}_{b}\left(\mathbb{R\times R}^{d}\right)\right)$
be a real-valued homogeneous L\'evy basis with characteristic triplet
$\left(\gamma_{\tau},b,\nu\right)$ relative to a continuous truncation
function $\tau$, that is, the L\'evy seed of $L$ satisfies that 
\[
\mathcal{C}(z\ddagger L')=i\gamma_{\text{\ensuremath{\tau}}}z-\frac{1}{2}b^{2}z^{2}+\int_{\mathbb{R}\backslash\{0\}}[e^{izx}-1-iz\tau(x)]\nu(\mathrm{d}x).
\]
A real-valued random field of the form
\begin{equation}
\xi(\omega,s,q)=\sum\limits _{i=1}^{n}a_{i}\mathbf{1}_{F_{i}}(\omega)\mathbf{1}_{(u_{i},t_{i}]}(s)\mathbf{1}_{A_{i}}(q),\label{eq:simplerandomfield}
\end{equation}
where $A_{i}\in\mathcal{B}_{b}\left(\mathbb{R}^{d}\right)$, $u_{i}<t_{i}$,
$F_{i}\in\mathcal{F}_{u_{i}}$, and $a_{i}\in\mathbb{R},$ is called
a simple predictable random field. More generally, if $\mathcal{P}$
denotes the predictable $\sigma$-algebra associated to $\left(\mathcal{F}_{t}\right)_{t\in\mathbb{R}}$
a random field is said to be predictable if it is $\mathcal{P\otimes B}\left(\mathbb{R}^{d}\right)$-measurable
\footnote{In the case when there is no temporal component, $\mathcal{P}$ is
	replaced by $\mathcal{F}$ and all the results presented in this appendix
	remain valid.}. When $\xi$ is a simple random field as in (\ref{eq:simplerandomfield}),
the stochastic integral of $\xi$ w.r.t. $L$ is defined as 
\begin{equation}
I_{L}\left(\xi\right):=\int_{\mathbb{R}}\int_{\mathbb{R}^{d}}\xi\left(s,q\right)L\left(\mathrm{d}q\mathrm{d}s\right):=\sum\limits _{i=1}^{n}a_{i}\mathbf{1}_{F_{i}}L\left((u_{i},t_{i}]\times A_{i}\right).\label{eqn1.1}
\end{equation}
In stochastic integration theory one is usually looking for a linear
extension of $I_{L}$ into a rich enough linear subspace of predictable
random fields, let's say $\mathcal{D}\left(I_{L}\right)$, such that
$I_{L}\left(\xi\right)$ can be approximated by simple stochastic
integrals of the form (\ref{eqn1.1}) as well as satisfying a Dominated
Convergence Theorem, that is, if $\left(\xi_{n}\right)_{n\in\mathbb{N}}$
is a sequence of simple functions such that $\xi_{n}\rightarrow\xi$
point-wise, then $I_{L}\left(\xi_{n}\right)\overset{\mathbb{P}}{\rightarrow}I_{L}\left(\xi\right).$
Moreover, if $\left(\xi_{n}\right)_{n\in\mathbb{N}}\subset\mathcal{D}\left(I_{L}\right)$
such that $\xi_{n}\rightarrow\xi$ point-wise and $\left\vert \xi_{n}\right\vert \leq\xi^{\ast}$
for some $\xi^{\ast}\in\mathcal{D}\left(I_{L}\right)$, then $I_{L}\left(\xi_{n}\right)\overset{\mathbb{P}}{\rightarrow}I_{L}\left(\xi\right)$.
When $c\left(\left\{ s\right\} \times A\right)=0$ for every $t\in\mathbb{R}$,
we will choose $\mathcal{D}\left(I_{L}\right)$ to be the space of
predictable random fields such that almost surely
\begin{eqnarray}
1.\text{ } &  & \int_{\mathbb{R}}\int_{\mathbb{R}^{d}}\left\vert \gamma\xi\left(s,q\right)+\int_{\mathbb{R}}\left[\tau\left(\xi\left(s,q\right)x\right)-\xi\left(s,q\right)\tau\left(x\right)\right]\nu\left(\mathrm{d}x\right)\right\vert \mathrm{d}q\mathrm{d}s<\infty;\nonumber \\
2.\text{ } &  & \int_{\mathbb{R}}\int_{\mathbb{R}^{d}}\xi^{2}\left(s,q\right)b^{2}\mathrm{d}q\mathrm{d}s<\infty;\label{eqn1.5}\\
3.\text{ } &  & \int_{\mathbb{R}}\int_{\mathbb{R}^{d}}\int_{\mathbb{R}}\left(1\wedge\left\vert \xi\left(s,q\right)x\right\vert ^{2}\right)\nu\left(\mathrm{d}x\right)\mathrm{d}q\mathrm{d}s<\infty.\nonumber 
\end{eqnarray}
where $\tau$ is a continuous truncation function. \cite{ChongKlupp15}
have shown that $\mathcal{D}\left(I_{L}\right)$ is actually the biggest
closed linear subspace of predictable random fields in which $I_{L}$
can be defined in the previously explained sense. In the case when
$\xi$ is deterministic, we have that $I_{L}\left(\xi\right)$ is
ID, and
\begin{equation}
\mathcal{C}\left[z\ddagger\int_{\mathbb{R}}\int_{\mathbb{R}^{d}}\xi\left(s,q\right)L\right]\left(\mathrm{d}q\mathrm{d}s\right):=\int_{\mathbb{R}}\int_{\mathbb{R}^{d}}\mathcal{C}(z\xi\left(s,q\right)\ddagger L')\mathrm{d}q\mathrm{d}s,\,\,\,z\in\mathbb{R}.\label{cumulantstochint}
\end{equation}
Put 
\begin{align}
\Phi_{L}^{0}\left(y\right):= & U_{\tau}(y)+b^{2}y^{2}+\int_{\mathbb{R}}(1\wedge\left\vert yx\right\vert ^{2})\nu(\mathrm{d}x),\,\,\,y\geq0;\label{eq:deterministicmodular}
\end{align}
where
\begin{align}
U_{\tau}(y):= & \left\vert y\gamma_{\tau}+\int_{\mathbb{R}}\left[\tau\left(yx\right)-y\tau\left(x\right)\right]\nu(\mathrm{d}x)\right\vert ,\label{eq:modularU}
\end{align}
and for $y<0$ let $\Phi_{L}^{0}\left(y\right)=\Phi_{L}^{0}\left(-y\right)$.
$\mathcal{D}\left(I_{L}\right)$ consists of those predictable fields
that almost surely $\int_{\mathbb{R}}\int_{\mathbb{R}^{d}}\Phi_{L}^{0}(\xi\left(s,q\right))\mathrm{d}q\mathrm{d}s<\infty$. 
Now, we proceed to describe the topological structure of $\mathcal{D}\left(I_{L}\right)$.
Define
\begin{equation}
\Psi_{0}\left(\xi\right):=\mathbb{E}\left\{ \left[\int_{\mathbb{R}}\int_{\mathbb{R}^{d}}\Phi_{L}^{0}\left(\xi\left(s,q\right)\right)\mathrm{d}q\mathrm{d}s\right]\wedge1\right\} \text{, \ \ }\xi\in\mathcal{D}\left(I_{L}\right)\text{.}\label{eq:randommodular}
\end{equation}
The next result corresponds to Theorem 3.3 in \cite{BasseGravPed13}
accommodated to the multi-parameter case.
\begin{theorem}[\cite{BasseGravPed13}] \label{DCTstcint}Fix a continuous
	truncation function $\tau$ and let $L$ be a homogeneous L\'evy basis
	with characteristic triplet $\left(\gamma_{\tau},b,\nu\right)$. Suppose
	that $\left(\xi_{n}\right)_{n\in\mathbb{N}}\subset\mathcal{D}\left(I_{L}\right)$.
	Then as $n\rightarrow\infty$
	\begin{equation}
	\int_{\mathbb{R}}\int_{\mathbb{R}^{d}}\xi_{n}\left(s,q\right)L\left(\mathrm{d}q\mathrm{d}s\right)\overset{\mathbb{P}}{\rightarrow}0\text{ \ if and only if \ }\Psi_{0}\left(\xi_{n}\right)\rightarrow0.\label{eqn1.2}
	\end{equation}
	In particular, if the $\xi_{n}$'s are deterministic, this is equivalent
	to having that \linebreak $\int_{\mathbb{R}}\int_{\mathbb{R}^{d}}\Phi_{L}^{0}\left[\xi_{n}\left(s,q\right)\right]\mathrm{d}q\mathrm{d}s\rightarrow0.$
\end{theorem}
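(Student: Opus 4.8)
The plan is to reduce the statement to its one-parameter form, \cite[Thm.~3.3]{BasseGravPed13}, in its multi-parameter version, whose proof uses only that $L$ is independently scattered and infinitely divisible and never the order of the index set; here is the direct argument. First I would treat deterministic integrands $(\xi_{n})_{n\in\mathbb{N}}\subset\mathcal{D}(I_{L})$. By (\ref{cumulantstochint}), $\mathbb{E}[e^{izI_{L}(\xi_{n})}]=\exp\{\int_{\mathbb{R}}\int_{\mathbb{R}^{d}}\mathcal{C}(z\xi_{n}(s,q)\ddagger L')\,\mathrm{d}q\,\mathrm{d}s\}$, so $I_{L}(\xi_{n})\overset{\mathbb{P}}{\rightarrow}0$ is equivalent to $\mathbb{E}[e^{izI_{L}(\xi_{n})}]\to1$ for every $z$, hence to the vanishing, for every $z$, of both the real and the imaginary part of the exponent. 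The real part equals $\int\int\{\frac{1}{2}b^{2}z^{2}\xi_{n}^{2}+\int_{\mathbb{R}}(1-\cos(z\xi_{n}x))\nu(\mathrm{d}x)\}\,\mathrm{d}q\,\mathrm{d}s$, and from $1-\cos v\le2(1\wedge v^{2})$ together with $\int_{0}^{1}(1-\cos(zv))\,\mathrm{d}z=1-\frac{\sin v}{v}\ge c\,(1\wedge v^{2})$ for a universal $c>0$, its vanishing for all $z$ is equivalent to $\int\int\{b^{2}\xi_{n}^{2}+\int_{\mathbb{R}}(1\wedge|\xi_{n}x|^{2})\nu(\mathrm{d}x)\}\,\mathrm{d}q\,\mathrm{d}s\to0$. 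Granting this, expanding $\sin$ about the origin and estimating the large-jump compensator separately reduces the imaginary-part condition to $\int\int U_{\tau}(\xi_{n})\,\mathrm{d}q\,\mathrm{d}s\to0$, with $U_{\tau}$ as in (\ref{eq:modularU}); combining the two gives the deterministic case, namely $I_{L}(\xi_{n})\overset{\mathbb{P}}{\rightarrow}0$ iff $\int\int\Phi_{L}^{0}(\xi_{n})\,\mathrm{d}q\,\mathrm{d}s\to0$, i.e. iff $\Psi_{0}(\xi_{n})\to0$.

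Next I would pass to general predictable integrands, first for a simple predictable field as in (\ref{eq:simplerandomfield}). Refining to a disjoint decomposition and conditioning successively on the $\sigma$-fields $\mathcal{F}_{u_{i}}$ (using that $L$ on $(u_{i},t_{i}]\times\mathbb{R}^{d}$ is independent of $\mathcal{F}_{u_{i}}$), one obtains $\mathbb{E}[e^{izI_{L}(\xi_{n})}]=\mathbb{E}[\exp\{W_{n}(z)\}]$ with $W_{n}(z)=\int\int\mathcal{C}(z\xi_{n}(s,q)\ddagger L')\,\mathrm{d}q\,\mathrm{d}s$ now random and satisfying $\mathrm{Re}\,W_{n}(z)\le0$. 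Since $|e^{W_{n}(z)}|\le1$, the convergence $\mathbb{E}[e^{W_{n}(z)}]\to1$ forces $1-\mathrm{Re}(e^{W_{n}(z)})\to0$ in $\mathcal{L}^{1}$, and a standard argument (detailed in \cite{BasseGravPed13}) upgrades this to $W_{n}(z)\to0$ in probability for every $z$; by the deterministic computation applied pathwise this is equivalent to $\int\int\Phi_{L}^{0}(\xi_{n}(s,q))\,\mathrm{d}q\,\mathrm{d}s\to0$ in probability, which by bounded convergence is in turn equivalent to $\Psi_{0}(\xi_{n})\to0$. The general case then follows from the construction of $I_{L}$ recalled above: every $\xi\in\mathcal{D}(I_{L})$ is a $\Psi_{0}$-limit of simple predictable fields and $I_{L}$ respects such limits, so if $\Psi_{0}(\xi_{n})\to0$ one approximates each $\xi_{n}$ within $\Psi_{0}$-distance $1/n$ by a simple field and transfers the conclusion, while if $I_{L}(\xi_{n})\overset{\mathbb{P}}{\rightarrow}0$ but $\Psi_{0}(\xi_{n})\not\to0$ one passes to a subsequence on which $\Psi_{0}$ is bounded away from $0$ and reaches a contradiction in the same way.

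I expect the main obstacle to be the comparison invoked in the first step: that, uniformly in the base point $(s,q)$ and with constants depending only on $(\gamma_{\tau},b,\nu)$, smallness of the real part of the cumulant for all $z$ is equivalent to smallness of the Musielak--Orlicz-type modular $\Phi_{L}^{0}$ (with the analogous statement for the truncated-drift term $U_{\tau}$). Everything hinges on pinning down the averaging inequalities for $1-\cos$ and the estimate for $U_{\tau}$; the remaining manipulations are bookkeeping. Finally, as announced, nothing above uses the order of the index set, only that $L$ is independently scattered and infinitely divisible and that the integrands are predictable for the relevant filtration, so the one-parameter statement of \cite{BasseGravPed13} carries over without change to random fields indexed by $\mathbb{R}\times\mathbb{R}^{d}$, which is precisely what is needed here.
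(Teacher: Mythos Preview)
The paper does not prove this theorem at all: it is stated as a citation, introduced with ``The next result corresponds to Theorem~3.3 in \cite{BasseGravPed13} accommodated to the multi-parameter case,'' and no argument is supplied. So there is no ``paper's own proof'' to compare against; you are reconstructing the argument behind a quoted result.

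Your reconstruction is broadly the right one and matches the strategy of \cite{BasseGravPed13}: characteristic-function analysis in the deterministic case, then conditioning for simple predictable fields, then approximation. Two places deserve tightening. First, the passage from ``real part of the exponent vanishes for all $z$'' to ``$\int\!\!\int\{b^{2}\xi_{n}^{2}+\int(1\wedge|\xi_{n}x|^{2})\nu(\mathrm{d}x)\}\to0$'' requires a uniform lower bound of the type $\int_{0}^{K}(1-\cos(zv))\,\mathrm{d}z\ge c_{K}(1\wedge v^{2})$ together with an \emph{a priori} control showing the Gaussian piece and the jump piece can be separated; you state this but should be explicit that the constant is uniform in the integration variable $(s,q)$. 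Second, the converse direction in the general predictable case (``pass to a subsequence on which $\Psi_{0}$ is bounded away from $0$ and reach a contradiction in the same way'') is too sketchy: you need that $I_{L}$ is a topological isomorphism between $(\mathcal{D}(I_{L}),\Psi_{0})$ and its image in $L^{0}$, which is exactly the content of the cited theorem and does not follow from the forward direction alone. Either invoke \cite{BasseGravPed13} explicitly for this, or spell out the open-mapping-type argument they use.
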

Based on the previous theorem, it is possible to find a sufficient
condition for the convergence of sequences of the ambit-type. To do
that the following lemma is crucial and it was originally stated in
Lemma 2.1.5 in \cite{Rosinski07}. 
\begin{lemma} \label{boundsrosinski} For $\tau(x)=\frac{x}{1\lor\left|x\right|}$
	let $\Phi_{L}^{0}$ be as in (\ref{eq:deterministicmodular}). Then
	$\Phi_{L}^{0}$ is continuous, even, and satisfies 
	\[
	\Phi_{L}^{0}\left(x+y\right)\leq3[\Phi_{L}^{0}\left(x\right)+\Phi_{L}^{0}\left(y\right)],
	\]
	and
	\[
	\Phi_{L}^{0}(Kx)\leq(K^{2}\lor2)\Phi_{L}^{0}(x),
	\]
	for any $x,y,K\in\mathbb{R}$.	
\end{lemma}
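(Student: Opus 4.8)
The plan is to prove the four assertions (evenness, continuity, the scaling bound, the quasi‑subadditivity) separately, reducing the two inequalities to elementary pointwise estimates that exploit the explicit form $\tau(s)=s/(1\vee|s|)$. Throughout I would use that this $\tau$ is odd, $1$‑Lipschitz, satisfies $|\tau(s)|\le 1\wedge|s|$, and agrees with the identity on $[-1,1]$. First, $U_{\tau}$ in \eqref{eq:modularU} is even: replacing $y$ by $-y$ and using oddness of $\tau$ replaces $y\gamma_{\tau}+\int_{\mathbb R}[\tau(yx)-y\tau(x)]\nu(\mathrm dx)$ by its negative, which has the same modulus. Hence the formula $\Phi_{L}^{0}(y)=U_{\tau}(y)+b^{2}y^{2}+\int_{\mathbb R}(1\wedge|yx|^{2})\nu(\mathrm dx)$ is in fact valid for every $y\in\mathbb R$, and evenness of $\Phi_{L}^{0}$ follows at once. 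For continuity it suffices to treat each summand. The term $b^{2}y^{2}$ is clearly continuous. For the last term, on any bounded set $|y|\le R$ (take $R\ge1$) one has $1\wedge|yx|^{2}\le R^{2}(1\wedge|x|^{2})$, a $\nu$‑integrable majorant, and $y\mapsto 1\wedge|yx|^2$ is continuous, so dominated convergence applies. For $U_{\tau}$, the integrand $\tau(yx)-y\tau(x)$ vanishes whenever $|x|\le 1/R$ (then $|yx|\le1$ and $\tau(yx)=yx=y\tau(x)$), while for $|x|>1/R$ it is bounded by $1+R\le(1+R)R^{2}(1\wedge|x|^{2})$; thus the integrand has a $\nu$‑integrable majorant and is continuous in $y$, so $U_{\tau}$ is continuous on $[-R,R]$ and, $R$ being arbitrary, on $\mathbb R$.

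For the scaling inequality, by evenness I may assume $K,x\ge0$. Writing $\tau(Kxt)-Kx\tau(t)=K\bigl[\tau(xt)-x\tau(t)\bigr]+\bigl[\tau(Kxt)-K\tau(xt)\bigr]$ and using the triangle inequality gives $U_{\tau}(Kx)\le K\,U_{\tau}(x)+\int_{\mathbb R}|\tau(Kxt)-K\tau(xt)|\,\nu(\mathrm dt)$. The key pointwise estimate is
\[
|\tau(Ks)-K\tau(s)|+\bigl(1\wedge K^{2}s^{2}\bigr)\le (K^{2}\vee2)\bigl(1\wedge s^{2}\bigr),\qquad K\ge0,\ s\in\mathbb R,
\]
which is checked by casework according to whether $|s|\le1$ and whether $K|s|\le1$: in each of the handful of resulting regimes $\tau$ is either linear or constant, so both sides are explicit. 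Taking $s=xt$, integrating against $\nu$, and combining with $K\le K^{2}\vee2$ and $K^{2}b^{2}x^{2}\le(K^{2}\vee2)b^{2}x^{2}$ yields $\Phi_{L}^{0}(Kx)\le(K^{2}\vee2)\Phi_{L}^{0}(x)$.

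For the quasi‑subadditivity, treat the three summands of $\Phi_{L}^{0}(x+y)$ (valid for all real $x+y$ by the first paragraph). The Gaussian part obeys $b^{2}(x+y)^{2}\le2b^{2}x^{2}+2b^{2}y^{2}$. For the jump part, from $|(x+y)t|\le|xt|+|yt|$, $(a+b)^{2}\le2a^{2}+2b^{2}$, and the elementary facts $1\wedge(u+v)\le(1\wedge u)+(1\wedge v)$ and $1\wedge(cu)\le c(1\wedge u)$ for $c\ge1$, one obtains $\int_{\mathbb R}(1\wedge|(x+y)t|^{2})\nu(\mathrm dt)\le2\int_{\mathbb R}(1\wedge|xt|^{2})\nu(\mathrm dt)+2\int_{\mathbb R}(1\wedge|yt|^{2})\nu(\mathrm dt)$. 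For the drift part, the identity $\tau((x+y)t)-(x+y)\tau(t)=\bigl[\tau(xt)-x\tau(t)\bigr]+\bigl[\tau(yt)-y\tau(t)\bigr]+\bigl[\tau((x+y)t)-\tau(xt)-\tau(yt)\bigr]$ together with the triangle inequality gives $U_{\tau}(x+y)\le U_{\tau}(x)+U_{\tau}(y)+\int_{\mathbb R}|\tau((x+y)t)-\tau(xt)-\tau(yt)|\,\nu(\mathrm dt)$, and here the pointwise bound to be proved is
\[
|\tau(a+b)-\tau(a)-\tau(b)|\le\bigl(1\wedge a^{2}\bigr)+\bigl(1\wedge b^{2}\bigr),\qquad a,b\in\mathbb R,
\]
again by casework (assuming WLOG $|a|\ge|b|$) on whether each of $|a|$, $|b|$, $|a+b|$ exceeds $1$; the only nontrivial subcase, $|a|,|b|\le1<|a+b|$, uses the auxiliary estimate $w-1\le w^{2}/2$ with $w=|a|+|b|$. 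Collecting the three contributions, each of $\Phi_{L}^{0}(x)$ and $\Phi_{L}^{0}(y)$ appears with total coefficient at most $3$, giving $\Phi_{L}^{0}(x+y)\le3[\Phi_{L}^{0}(x)+\Phi_{L}^{0}(y)]$.

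The main obstacle is purely the verification of the two displayed pointwise inequalities governing the drift term: one must split into the regimes where the arguments of $\tau$ fall inside or outside $[-1,1]$ and, in a couple of borderline situations, invoke the small auxiliary scalar estimate above. Everything else — evenness, the dominated‑convergence arguments for continuity, and the assembly of the scalar inequalities into the integral bounds — is routine.
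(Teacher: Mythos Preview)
Your proof is correct and self-contained. The paper, however, does not actually prove this lemma: it merely quotes it, stating that it ``was originally stated in Lemma 2.1.5 in \cite{Rosinski07}'' and referring the reader there for the argument. So there is no in-paper proof to compare against beyond the citation.

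Your approach---splitting $\Phi_L^0$ into its three summands, reducing the two inequalities to the pointwise scalar estimates
\[
|\tau(Ks)-K\tau(s)|+(1\wedge K^2s^2)\le (K^2\vee 2)(1\wedge s^2)
\quad\text{and}\quad
|\tau(a+b)-\tau(a)-\tau(b)|\le(1\wedge a^2)+(1\wedge b^2),
\]
and verifying these by finite casework on which arguments lie in $[-1,1]$---is exactly the kind of elementary argument one expects for this result, and the case analyses check out (including the borderline subcase you flag, via $w-1\le w^2/2$). The dominated-convergence argument for continuity and the oddness-of-$\tau$ argument for evenness are routine and correct. In short, you have supplied what the paper outsources.
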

Recall that a random field $V$ is said to be bounded in $\mathcal{L}^{p}\left(\Omega,\mathcal{F},\mathbb{P}\right)$
for $p>0$, if $\sup_{s,q}\mathbb{E}\left(\left\vert V_{s}\left(q\right)\right\vert ^{p}\right)<\infty$.
Under this terminology we have the following result.
\begin{proposition} \label{convstchintambittype}Let $\tau(x)=\frac{x}{1\lor\left|x\right|}$
	and consider $L$ to be a homogeneous L\'evy basis with characteristic
	triplet $\left(\gamma_{\tau},b,\nu\right)$. Put 
	\[
	\xi_{n}\left(s,q\right):=f_{n}\left(s,q\right)V_{s}\left(q\right)\text{, \ \ }\left(s,q\right)\in\mathbb{R\times R}^{d}\text{,}
	\]
	where $\left(f_{n}\right)_{n\in\mathbb{N}}$ is a sequence of deterministic
	functions, and $V$ a predictable random field which is bounded in
	$\mathcal{L}^{2}\left(\Omega,\mathcal{F},\mathbb{P}\right)$. Then
	$\left(\xi_{n}\right)_{n\in\mathbb{N}}\subset\mathcal{D}\left(I_{L}\right)$
	provided that $\left(f_{n}\right)_{n\in\mathbb{N}}\subset\mathcal{D}\left(I_{L}\right)$.
	Moreover, as $n\rightarrow\infty$, $\int_{\mathbb{R}}\int_{\mathbb{R}^{d}}\xi_{n}\left(s,q\right)L\left(\mathrm{d}q\mathrm{d}s\right)\overset{\mathbb{P}}{\rightarrow}0$
	provided that $\int_{\mathbb{R}}\int_{\mathbb{R}^{d}}f_{n}\left(s,q\right)L\left(\mathrm{d}q\mathrm{d}s\right)\overset{\mathbb{P}}{\rightarrow}0$.	
\end{proposition}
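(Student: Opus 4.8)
The plan is to reduce both assertions to the already available deterministic statements for $(f_n)$, exploiting the fact that the quadratic growth bound for $\Phi_L^0$ provided by Lemma \ref{boundsrosinski} (valid precisely for the truncation $\tau(x)=x/(1\vee|x|)$ used here) is exactly compatible with the $\mathcal{L}^2$-boundedness imposed on $V$. First I would record that $\xi_n=f_nV$ is predictable, being the product of the deterministic (hence predictable) field $f_n$ and the predictable field $V$, and that $(\omega,s,q)\mapsto\Phi_L^0(\xi_n(\omega,s,q))$ is jointly measurable since $\Phi_L^0$ is continuous by Lemma \ref{boundsrosinski}; this is what will allow the use of Tonelli's theorem below.

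The core estimate is obtained pointwise. For each fixed $(\omega,s,q)$, apply the inequality $\Phi_L^0(Kx)\le(K^2\vee2)\Phi_L^0(x)$ of Lemma \ref{boundsrosinski} with $K=V_s(q)(\omega)$ and $x=f_n(s,q)$ to get
\[
\Phi_L^0(\xi_n(\omega,s,q))\le(V_s(q)(\omega)^2\vee2)\,\Phi_L^0(f_n(s,q))\le(V_s(q)(\omega)^2+2)\,\Phi_L^0(f_n(s,q)).
\]
Integrating over $\mathbb{R}\times\mathbb{R}^{d}$, taking expectations, invoking Tonelli (all integrands are nonnegative and jointly measurable) and writing $C:=\sup_{s,q}\mathbb{E}(V_s(q)^2)<\infty$, one obtains
\[
\mathbb{E}\left[\int_{\mathbb{R}}\int_{\mathbb{R}^{d}}\Phi_L^0(\xi_n(s,q))\mathrm{d}q\mathrm{d}s\right]\le(C+2)\int_{\mathbb{R}}\int_{\mathbb{R}^{d}}\Phi_L^0(f_n(s,q))\mathrm{d}q\mathrm{d}s.
\]
For the first claim, since $f_n\in\mathcal{D}(I_L)$ and $f_n$ is deterministic, the right-hand side is a finite constant; hence the left-hand side is finite, so $\int_{\mathbb{R}}\int_{\mathbb{R}^{d}}\Phi_L^0(\xi_n(s,q))\mathrm{d}q\mathrm{d}s<\infty$ almost surely, which is exactly the membership criterion for $\mathcal{D}(I_L)$ recalled in the appendix, giving $(\xi_n)_{n\in\mathbb{N}}\subset\mathcal{D}(I_L)$.

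For the convergence, Theorem \ref{DCTstcint} applied to the deterministic sequence $(f_n)$ shows that $\int_{\mathbb{R}}\int_{\mathbb{R}^{d}}f_n(s,q)L(\mathrm{d}q\mathrm{d}s)\overset{\mathbb{P}}{\rightarrow}0$ is equivalent to $a_n:=\int_{\mathbb{R}}\int_{\mathbb{R}^{d}}\Phi_L^0(f_n(s,q))\mathrm{d}q\mathrm{d}s\rightarrow0$. Combining this with the displayed bound and with the definition $\Psi_0(\xi_n)=\mathbb{E}[(\int_{\mathbb{R}}\int_{\mathbb{R}^{d}}\Phi_L^0(\xi_n(s,q))\mathrm{d}q\mathrm{d}s)\wedge1]$ yields $\Psi_0(\xi_n)\le\mathbb{E}[\int_{\mathbb{R}}\int_{\mathbb{R}^{d}}\Phi_L^0(\xi_n(s,q))\mathrm{d}q\mathrm{d}s]\le(C+2)a_n\rightarrow0$, and a final application of Theorem \ref{DCTstcint}, now to $(\xi_n)_{n\in\mathbb{N}}\subset\mathcal{D}(I_L)$, gives $\int_{\mathbb{R}}\int_{\mathbb{R}^{d}}\xi_n(s,q)L(\mathrm{d}q\mathrm{d}s)\overset{\mathbb{P}}{\rightarrow}0$. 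There is no deep obstacle here; the only real point is to line up the (at best quadratic) growth of $\Phi_L^0$ with the exponent of the norm assumed on $V$, which is why the $\mathcal{L}^2$-boundedness cannot be relaxed, together with the routine verification of joint measurability needed to pass the expectation inside the spatial integral.
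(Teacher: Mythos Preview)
Your proof is correct and follows essentially the same approach as the paper: apply the quadratic growth bound $\Phi_L^0(Kx)\le(K^2\vee2)\Phi_L^0(x)$ from Lemma \ref{boundsrosinski} pointwise with $K=V_s(q)$, take expectations using Tonelli and the $\mathcal{L}^2$-boundedness of $V$, and conclude both claims via Theorem \ref{DCTstcint}. The paper records this in a single displayed inequality and leaves the remaining steps implicit; your version spells out the predictability, measurability, and the two invocations of Theorem \ref{DCTstcint}, but the argument is the same.
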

\begin{proof}By Lemma \ref{boundsrosinski}, we get that 
	\[
	\Psi_{\Phi_{L}^{0}}(\xi_{n})\leq\int_{\mathbb{R}}\int_{\mathbb{R}^{d}}\mathbb{E}(V_{s}(q)^{2}\lor2)\Phi_{L}^{0}[f_{n}(s,q)]\mathrm{d}q\mathrm{d}s.
	\]
	All the conclusions of this proposition then follow easily by this,
	the $\mathcal{L}^{2}\left(\Omega,\mathcal{F},\mathbb{P}\right)$ boundedness
	of $V$ and Theorem \ref{DCTstcint}. \end{proof}
\begin{remark}\label{multidimensionalint}Integration of $\mathbb{R}^{m}$-valued
	predictable fields can be done entry by entry, that is, if $\xi=(\xi_{i})_{i=1}^{n}$,
	in which $\xi_{i}\in\mathcal{D}\left(I_{L}\right)$, then we define
	\[
	\int_{\mathbb{R}}\int_{\mathbb{R}^{d}}\xi(s,q)L(\mathrm{d}q\mathrm{d}s):=\left\{ \int_{\mathbb{R}}\int_{\mathbb{R}^{d}}\xi_{i}(s,q)L(\mathrm{d}q\mathrm{d}s)\right\} _{i=1}^{n}.
	\]
	Finally, observe that $\xi_{i}\in\mathcal{D}\left(I_{L}\right)$ is
	equivalent to 
	\[
	\int_{\mathbb{R}}\int_{\mathbb{R}^{d}}\Phi_{L}^{0}(\left\Vert \xi\left(s,q\right)\right\Vert )\mathrm{d}q\mathrm{d}s<\infty.
	\]
	In particular, when $\xi$ is deterministic, we have that the random
	vector $X=\int_{\mathbb{R}}\int_{\mathbb{R}^{d}}\xi(s,q)L(\mathrm{d}q\mathrm{d}s)$
	is ID and has characteristic triplet $(\Gamma_{X},B_{X},\nu_{X})$
	relative to some $\mathbb{R}^{m}$-valued truncation function $\stackrel{\rightarrow}{\tau}$,
	given by 
	\begin{align*}
	\Gamma_{X} & =\gamma_{\tau}\int_{\mathbb{R}}\int_{\mathbb{R}^{d}}\xi(s,q)\mathrm{d}q\mathrm{d}s+\int_{\mathbb{R}}\int_{\mathbb{R}^{d}}\int_{\mathbb{R}}[\stackrel{\rightarrow}{\tau}(x\xi(s,q))-\xi(s,q)\tau(x)\nu(\mathrm{d}x)]\mathrm{d}q\mathrm{d}s;\\
	B_{X} & =b^{2}\int_{\mathbb{R}}\int_{\mathbb{R}^{d}}\xi(s,q)\xi(s,q)'\mathrm{d}q\mathrm{d}s;\\
	\nu_{X}(A) & =\int_{\mathbb{R}}\int_{\mathbb{R}^{d}}\int_{\mathbb{R}}\mathbf{1}_{A}(x\xi(s,q))\nu(\mathrm{d}x)\mathrm{d}q\mathrm{d}s.
	\end{align*}	
\end{remark}

\section*{Appendix B}

This supplementary appendix contains several technical results that are used through the proofs of Theorems \ref{gaussianattractor}-\ref{stableattractor}.\label{key}
\subsection*{A Stochastic Fubini Theorem}

By using the L\'evy-It\^o decomposition of ID fields introduced
in \cite{Rosinski16}, cf. \cite{Ped03}, and a small refinement in
the arguments of Theorem 3.1 in \cite{BasseBN11}, cf. Lemma 4.9 in
\cite{BassePed09}, we obtain a stochastic Fubini's Theorem for surface
integrals and L\'evy bases. We recall that a random field $(Z(t))_{t\in T}$
is said to be separable in probability if there exist $T_{0}\subseteq T$
countable such that for any $t\in T$ it is possible to extract $\{t_{n}\}\subset T_{0}$
satisfying that $Z(t_{n})\overset{\mathbb{P}}{\rightarrow}Z_{t}.$		
\begin{lemma}\label{Fubinilemma}Fix $n\leq m$. Let $K\subset\mathbb{R}^{n}$
	be compact and $\varphi:K\rightarrow\mathbb{R}^{m}$ a continuously
	differentiable function with Jacobian $D\varphi$. Given a measurable
	function $f:\mathbb{R}^{m}\times\mathbb{R}^{d}\rightarrow\mathbb{R}$ and a homogeneous L\'evy basis $L$ with characteristic triplet $(\gamma,b,\nu)$,
	assume that the ID field
	\begin{equation}
	Z(p)=\int_{\mathbb{R}^{d}}f(p,q)L(\mathrm{d}q),\,\,\,p\in\mathbb{R}^{m},\label{Zdef}
	\end{equation}
	is well defined and separable in probability on $\varphi(K)$. Suppose
	in addition that
	\begin{enumerate}
		\item 	$\int_{K}\left[\int_{\mathbb{R}^{d}}\{\left|\gamma f(\varphi(u),q)\right|+\left|bf(\varphi(u),q)\right|^{2}\}\mathrm{d}q\right]\left|D\varphi(u)\right|\mathrm{d}u  <\infty$;
		\item $\int_{K}\left[\int_{\mathbb{R}^{d}}\int_{\left|x\right|\leq1}\left|xf(\varphi(u),q)\right|\land\left|xf(\varphi(u),q)\right|^{2}\nu(\mathrm{d}x)\mathrm{d}q\right]\left|D\varphi(u)\right|\mathrm{d}u <\infty$;
		\item $ \int_{\mathbb{R}^{d}}\int_{\left|x\right|>1}(\left|x\chi(q)\right|\land1)\nu(\mathrm{d}x)\mathrm{d}q  <\infty$,
	\end{enumerate}
	where $\left|D\varphi\right|:=Det(D\varphi^{'}D\varphi)^{1/2}$ and
	$\chi(q):=\int_{K}\left|f(\varphi(u),q)\right|\left|D\varphi(u)\right|\mathrm{d}u$.
	Then the random field $\{Z(\varphi(u))\}_{u\in K}$ can be chosen measurable
	and 
	\begin{align}
	\int_{K}Z(\varphi(u))\left|D\varphi(u)\right|\mathrm{d}u & =\int_{\mathbb{R}^{d}}\int_{K}f(\varphi(u),q)\left|D\varphi(u)\right|\mathrm{d}uL(\mathrm{d}q),\label{eq:fubini}
	\end{align}
	in the sense that both integrals exist and are equal almost surely.\end{lemma}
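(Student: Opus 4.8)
The plan is to reduce to the one-parameter stochastic Fubini theorem of \cite{BasseBN11} by splitting $L$ through its L\'evy--It\^o representation and treating the resulting pieces separately; the surface measure $\left|D\varphi(u)\right|\mathrm{d}u$ on $K$ plays exactly the role that Lebesgue measure on an interval does there, and conditions 1.--3.\ are designed so that each piece is well defined and so that (\ref{eq:fubini}) makes sense summand by summand. Concretely, by \cite{Rosinski16} (cf.\ \cite{Ped03}) one may write, for bounded Borel $A$,
\[
L(A)=\gamma Leb(A)+bW(A)+\int_{A}\int_{0<\left|x\right|\leq1}x\tilde{N}(\mathrm{d}x,\mathrm{d}q)+\int_{A}\int_{\left|x\right|>1}xN(\mathrm{d}x,\mathrm{d}q),
\]
with $W$ a Gaussian L\'evy basis with control measure $Leb$, $N$ a Poisson random measure with intensity $\nu(\mathrm{d}x)\mathrm{d}q$, and $\tilde{N}=N-\nu\otimes Leb$. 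Decomposing $Z$ accordingly as $Z=Z^{\mathrm{d}}+Z^{\mathrm{G}}+Z^{\mathrm{s}}+Z^{\mathrm{b}}$, it suffices to prove the measurable modification and (\ref{eq:fubini}) for each of the four fields.

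The drift part $Z^{\mathrm{d}}(p)=\gamma\int_{\mathbb{R}^{d}}f(p,q)\mathrm{d}q$ is disposed of by the classical Fubini--Tonelli theorem, justified by finiteness of the $\gamma$-term in condition 1. The large-jump part is, pathwise, the locally finite sum $Z^{\mathrm{b}}(p)=\sum xf(p,q)$ over the atoms $(x,q)$ of $N$ with $\left|x\right|>1$; by the classical criterion for a.s.\ finiteness of a Poisson integral, condition 3.\ gives $\sum_{\left|x\right|>1}\left|x\right|\chi(q)<\infty$ a.s., so $(\omega,u)\mapsto Z^{\mathrm{b}}(\varphi(u))$ is a locally finite sum of jointly measurable functions, hence jointly measurable, and (\ref{eq:fubini}) for $Z^{\mathrm{b}}$ is Fubini for an absolutely convergent random double series. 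For the remaining two fields one approximates $f$ by simple functions $f_{k}=\sum_{j}c_{k,j}\mathbf{1}_{B_{k,j}\times A_{k,j}}$, $A_{k,j}\in\mathcal{B}_{b}(\mathbb{R}^{d})$, converging to $f$ in the relevant modular sense; for simple $f$, (\ref{eq:fubini}) is immediate by bilinearity, both sides reducing to $\sum_{j}c_{k,j}\bigl(\int_{K}\mathbf{1}_{B_{k,j}}(\varphi(u))\left|D\varphi(u)\right|\mathrm{d}u\bigr)L(A_{k,j})$, and $u\mapsto Z_{k}(\varphi(u))$ is trivially jointly measurable. For $Z^{\mathrm{G}}$ the limit passage uses the Wiener isometry together with Minkowski's integral inequality and $\left|K\right|<\infty$: the $b^{2}$-term in condition 1.\ controls $\int_{\mathbb{R}^{d}}\bigl(\int_{K}f(\varphi(u),q)\left|D\varphi(u)\right|\mathrm{d}u\bigr)^{2}\mathrm{d}q$, so both sides of (\ref{eq:fubini}) for $Z^{\mathrm{G}}$ are $\mathcal{L}^{2}$-limits of the simple identities, and the measurable version comes from the $\mathcal{L}^{2}$-continuity of $u\mapsto Z^{\mathrm{G}}(\varphi(u))$. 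For the compensated small-jump part $Z^{\mathrm{s}}$ the isometry is replaced by the modular $\Psi_{0}$ and Theorem \ref{DCTstcint}: using the quasi-subadditivity and scaling bounds for $\Phi_{L}^{0}$ of Lemma \ref{boundsrosinski}, together with a Jensen/approximation argument, one bounds $\Phi_{L}^{0}$ of the inner integral $q\mapsto\int_{K}g(\varphi(u),q)\left|D\varphi(u)\right|\mathrm{d}u$ by a constant multiple of $\int_{K}\Phi_{L}^{0}(g(\varphi(u),q))\left|D\varphi(u)\right|\mathrm{d}u$; condition 2.\ and dominated convergence then give $\Psi_{0}\bigl(q\mapsto\int_{K}(f-f_{k})(\varphi(u),q)\left|D\varphi(u)\right|\mathrm{d}u\bigr)\to0$, hence convergence in probability of the right-hand side, while the left-hand side converges because $Z_{k}^{\mathrm{s}}(\varphi(\cdot))\to Z^{\mathrm{s}}(\varphi(\cdot))$ in $L^{0}$ of $\mathbb{P}\otimes\left|D\varphi(u)\right|\mathrm{d}u$ on $K$ (using $\left|K\right|<\infty$), which passes through $\int_{K}\cdot\left|D\varphi(u)\right|\mathrm{d}u$; a.s.-convergent subsequences then furnish the measurable modification. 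Adding the four identities gives (\ref{eq:fubini}), and the union of the four modifications, which agree in probability with $Z\circ\varphi$ by separability of $Z$ on $\varphi(K)$, is the desired measurable version of $\{Z(\varphi(u))\}_{u\in K}$.

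The delicate point is the compensated small-jump piece: no genuine isometry is available there, so one must work with the Orlicz-type modular, and the crux is the comparison of $\Phi_{L}^{0}$ of the $K$-integral with the $K$-integral of $\Phi_{L}^{0}$, quantified uniformly enough to interchange the approximation limit with $\int_{K}\cdot\left|D\varphi(u)\right|\mathrm{d}u$ on both sides simultaneously. This, and the analogous surface-measure bookkeeping throughout, is exactly the ``small refinement'' of the argument of \cite{BasseBN11} needed to pass from one-dimensional to surface integrals; the drift and Gaussian pieces are routine and the large-jump piece is purely pathwise.
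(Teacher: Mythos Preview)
Your overall strategy---L\'evy--It\^o decomposition followed by separate treatment of each piece---matches the paper's, and your handling of the drift, Gaussian, and large-jump parts is essentially correct. (For the large-jump piece the paper makes explicit that separability of $Z$ on $\varphi(K)$ is what justifies the pathwise Poisson representation of $Z^{J}$ via Theorem~3.2 in \cite{Rosinski16}; you cite \cite{Rosinski16} at the outset but never say where separability is actually used.)

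The real divergence, and the gap, is in the compensated small-jump part $Z^{\mathrm{s}}$. Your argument hinges on the claim that
\[
\Phi_{L}^{0}\Bigl(\int_{K}g(\varphi(u),q)\,|D\varphi(u)|\,\mathrm{d}u\Bigr)\ \lesssim\ \int_{K}\Phi_{L}^{0}\bigl(g(\varphi(u),q)\bigr)\,|D\varphi(u)|\,\mathrm{d}u.
\]
This is a Jensen-type inequality, but $\Phi_{L}^{0}$ is not convex: already $y\mapsto |yx|\wedge|yx|^{2}$ is concave across $|yx|=1$. The quasi-subadditivity in Lemma~\ref{boundsrosinski} only yields $\Phi_{L}^{0}\bigl(\sum_{i=1}^{N}a_{i}\bigr)\le 3^{N}\sum_{i}\Phi_{L}^{0}(a_{i})$, whose constant blows up when one passes from Riemann sums to an integral, and the scaling bound $\Phi_{L}^{0}(Ky)\le(K^{2}\vee2)\Phi_{L}^{0}(y)$ does not bridge the gap either. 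So the ``Jensen/approximation'' step is not justified by the tools you invoke, and without it you control neither side of (\ref{eq:fubini}) for $Z^{\mathrm{s}}$ in the limit.

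The paper bypasses this entirely. It observes that the compensated small-jump basis $M$ (characteristic triplet $(0,0,\nu|_{[-1,1]})$) satisfies $\mathbb{E}\bigl[|M(A)|\bigr]<\infty$ for bounded $A$---indeed $\mathbb{E}\bigl[M(A)^{2}\bigr]=Leb(A)\int_{|x|\le1}x^{2}\,\nu(\mathrm{d}x)<\infty$---so that both $W$ and $M$ fall within the scope of Theorem~3.1 in \cite{BasseBN11}. Conditions 1.\ and 2.\ are precisely the hypotheses that theorem requires, and (\ref{eq:fubini}) for $Z^{W}$ and $Z^{M}$ follows by direct citation, with no modular estimates needed. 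The ``small refinement'' alluded to in the paper is only the replacement of Lebesgue measure on an interval by the finite measure $|D\varphi(u)|\,\mathrm{d}u$ on $K$, not a new convexity argument.
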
		
\begin{proof}			
	Arguing as in \cite{BasseBN11}, we can always choose a measurable
	modification of $Z$, meaning that $\{Z(\varphi(u))\}_{u\in K}$ can
	be assumed to be measurable. Now, by the L\'evy-It\^o decomposition for
	L\'evy bases (see \cite{Ped03}) $L$ can be written as 
	\[
	L(A)=\gamma Leb(A)+W(A)+M(A)+J(A),\,\,\,A\in\mathcal{B}_{b}(\mathbb{R}^{d}),
	\]
	where $W,M\text{ and }J$ are three independent homogeneous L\'evy bases
	with characteristic triplets $(0,b,0)$, $(0,0,\left.\nu\right|_{[-1,1]})$
	and $(0,0,\left.\nu\right|_{[-1,1]^{c}})$, respectively, where the
	notation $\left.\nu\right|_{B}$ represents the restriction of the
	measure $\nu$ to the set $B$. Thus, $Z$ can be written as 
	\begin{equation}
	Z=Z^{\gamma}+Z^{W}+Z^{M}+Z^{J},\label{levyitoZ}
	\end{equation}
	where \textbf{$Z^{\gamma}$}, $Z^{W}$, $Z^{M}$ and $Z^{J}$ are
	independent fields defined as in (\ref{Zdef}) when we replace $L$
	by $\gamma Leb$,$W,M\text{ and }J$, respectively. Thus, it is enough
	to verify that (\ref{eq:fubini}) holds when we replace $Z$ by any
	of these fields. Now, it is clear that due to 1.,
	the usual Fubini's Theorem can be applied to $Z^{\gamma}$. Furthermore,
	since for every $A\in\mathcal{B}_{b}(\mathbb{R}^{d})$, $\mathbb{E}[\left|W(A)\right|]<\infty$,
	$\mathbb{E}[\left|M(A)\right|]<\infty$, we have that $W$ and $M$
	are within the framework of Theorem 3.1 in \cite{BasseBN11}. Consequently,
	in view that $\int_{K}\left|D\varphi(x)\right|\mathrm{d}x<\infty$,
	and 1. and 2. are
	satisfied, we have that the stochastic Fubini's Theorem in \cite{BasseBN11}
	can be applied to get that (\ref{eq:fubini}) is fulfilled for $Z^{W}$
	and $Z^{M}$. Therefore, it only remains to show that (\ref{eq:fubini})
	holds for $Z^{J}$. Since $Z$ is separable we may assume that the
	same holds for $Z^{J}$, which by Theorem 3.2 in \cite{Rosinski16}
	implies that for every $p\in\mathbb{R}^{m}$, almost surely 
	\[
	Z^{J}(p)=\int_{\mathbb{R}^{d}}\int_{\mathbb{R}}f(p,q)xN(\mathrm{d}x,\mathrm{d}q),
	\]
	where $N$ is a Poisson random measure with intensity $\mu(\mathrm{d}x,\mathrm{d}q)=\left.\nu\right|_{[-1,1]^{c}}(\mathrm{d}x)\otimes\mathrm{d}q$.
	Since $N(\mathrm{d}x,\mathrm{d}q)(\omega)$ is a $\sigma$-finite
	measure for every $\omega\in\Omega$, then $\int_{\mathbb{R}^{d}}\int_{\mathbb{R}}f(p,q)xN(\mathrm{d}x,\mathrm{d}q)$
	can be understood as a Lebesgue integral $\omega$ by $\omega$. Thus,
	to finish the proof, it is sufficient to show that on a set of probability
	one, it holds that
	\begin{equation}
	\int_{K}\int_{\mathbb{R}^{d}}\int_{\mathbb{R}}\left|f(\varphi(u),q)x\right|N(\mathrm{d}x,\mathrm{d}q)(\omega)\left|D\varphi(u)\right|\mathrm{d}u<\infty,\label{finitefubini}
	\end{equation}
	because in this case the usual Fubini's Theorem can be applied $\omega$
	by $\omega$ to obtain (\ref{eq:fubini}). By Tonelli's Theorem, almost
	surely
	\[
	\int_{K}\int_{\mathbb{R}^{d}}\int_{\mathbb{R}}\left|f(\varphi(u),q)x\right|N(\mathrm{d}x,\mathrm{d}q)\left|D\varphi(u)\right|\mathrm{d}u=\int_{\mathbb{R}^{d}}\int_{\mathbb{R}}\chi(q)\left|x\right|N(\mathrm{d}x,\mathrm{d}q).
	\]
	Equation (\ref{finitefubini}) follows from the previous equation,
	3. and Lemma 12.13 in \cite{Kallenberg02}.
\end{proof}		
\begin{remark}\label{remarkHausdorrff}Observe that under the assumptions
	of the previous lemma, if $\varphi$ is one-to-one (up to a null set)
	the area formula (see \cite{EvansGar92}) shows that almost surely
	\[
	\int_{K}Z(\varphi(u))\left|D\varphi(u)\right|\mathrm{d}u=\int_{C}Z(y)\mathcal{H}^{n}(\mathrm{d}y)=\int_{\mathbb{R}^{d}}\int_{C}f(y,q)\mathcal{H}^{n}(\mathrm{d}y)L(\mathrm{d}q),
	\]
	where $\mathcal{H}^{n}$ is the $n$-dimensional Hausdorff measure
	on $\mathbb{R}^{m}$.			
\end{remark}
\subsection*{Line integrals and Jordan domains}

In this part we deal with the asymptotic behaviour of certain type of line integrals. Recall the notation $(x,y)^{\perp}=(-y,x)$.
\begin{lemma}\label{keylemmaasymptotics}Let $A\subseteq\mathbb{R}^{2}$ be a Jordan domain with Lipschitz-regular boundary and $F:\mathbb{R}^{2}\rightarrow\mathbb{R}^{2}$ a measurable function that is continuous on $B^i\cap(\partial A)_{\oplus r_{0}^i}$ for some $r_{0}^i>0$, where for $i=1,2$, $B^{1}=A$ and $B^{2}=A^{*}$. Up to a null set, define for almost all $q\in\partial A$ 
	\begin{align}
	G_{r,\mathscr{D}}^{i}(s,q;F,A)&:=\ointop_{ r\mathbb{S}^1(q+rsu_{A}(q))}F\mathbf{1}_{B^{i}}\cdot n\mathrm{d}x;\\\label{Gfunctionkeylemma}
	G_{r,\mathscr{C}}^{i}(s,q;F,A)&:=\ointop_{ r\mathbb{S}^1(q+rsu_{A}(q))}F(u)\mathbf{1}_{B^{i}}\cdot\mathrm{d}u,
	\end{align}
	where $u_A(q)$ it the outward vector at $q$. Then 
	\begin{description}
		\item [{i)}] We have that 
		\[
		\sup_{r\leq r_{0},(s,q)\in[-1,1]\times\partial A}\left|r^{-1}G_{r,\mathscr{D}}^{i}(s,q;F,A)\right|<\infty;\,\,\,\,\sup_{r\leq r_{0},(s,q)\in[-1,1]\times\partial A}\left|r^{-1}G_{r,\mathscr{C}}^{i}(s,q;F,A)\right|<\infty,
		\]
		and for $\mathcal{H}^1$-almost all $q\in\partial A$, as $r\downarrow0$ 
		\begin{align}
		r^{-1}G_{r,\mathscr{D}}^{i}(s,q;F,A) & \rightarrow(-1)^{i}2\sqrt{1-\left|s\right|^{2}}F(q)\cdot u_{A}(q);\label{eq:convgDFnozero}\\
		r^{-1}G_{r,\mathscr{C}}^{i}(s,q;F,A) & \rightarrow(-1)^{i}2\sqrt{1-\left|s\right|^{2}}F(q)\cdot v_{A}(q).\label{eq:convgCFnozero}
		\end{align}
		\item [{ii)}] If in addition $F$ is continuously differentiable on $B^{i}\cap(\partial A)_{\oplus r_{0}}$
		and$\left.F\right|_{\partial A}=0$, then
		\[
		\sup_{r\leq r_{0},(s,q)\in[-1,1]\times\partial A}\left|r^{-2}G_{r,\mathscr{D}}^{i}(s,q;F,A)\right|<\infty;\,\,\,\,\sup_{r\leq r_{0},(s,q)\in[-1,1]\times\partial A}\left|r^{-2}G_{r,\mathscr{C}}^{i}(s,q;F,A)\right|<\infty,
		\]
		Furthermore, for $\mathcal{H}^1$-almost all $q\in\partial A$ and $\left|s\right|<1$,
		the following limits hold as $r\downarrow0$ 
		\begin{align}
		r^{-2}G_{r,\mathscr{D}}^{i}(s,q;F,A) & \rightarrow\begin{cases}
		\left[DF(q)_{11}+DF(q)_{22}\right](\pi+\left|s\right|\sqrt{1-s^{2}}-\arccos(\left|s\right|)) & \text{if }i=1;\\
		\left[DF(q)_{11}+DF(q)_{22}\right](\arccos(\left|s\right|)-\left|s\right|\sqrt{1-s^{2}}) & \text{if }i=2;
		\end{cases}\label{eq:convgD-1}\\
		r^{-2}G_{r,\mathscr{C}}^{i}(s,q;F,A) & \rightarrow\begin{cases}
		[DF(q)_{21}-DF(q)_{12}](\pi+\left|s\right|\sqrt{1-s^{2}}-\arccos(\left|s\right|)) & \text{if }i=1;\\{}
		[DF(q)_{21}-DF(q)_{12}](\arccos(\left|s\right|)-\left|s\right|\sqrt{1-s^{2}}) & \text{if }i=2,
		\end{cases}\label{eq:convgC-1}
		\end{align}
		where $DF$ denotes the Jacobian of $F$.
	\end{description}
\end{lemma}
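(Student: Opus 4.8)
The plan is to prove both statements by a blow‑up at a fixed boundary point: parametrise the small circle by angle, rescale by $r$, and pass to the limit by dominated convergence, the only genuinely non‑routine input being the convergence of the blown‑up set. Fix an $\mathcal{H}^{1}$‑a.e.\ point $q\in\partial A$; by Lipschitz‑regularity the outward unit vector $u_{A}(q)$ exists, the approximate tangent $\mathrm{Tan}(\partial A,q)$ is the line orthogonal to $u_{A}(q)$, and the rescaled sets $r^{-1}(A-q)$, $r^{-1}(A^{*}-q)$ converge in $L^{1}_{\mathrm{loc}}(\mathbb{R}^{2})$ to the half‑planes $H^{1}:=\{y:y\cdot u_{A}(q)\le0\}$ and $H^{2}:=\{y:y\cdot u_{A}(q)\ge0\}$; this is Rademacher's theorem applied to a (bi‑)Lipschitz parametrisation of $\partial A$, the ``regular'' part of the hypothesis being what singles out $u_{A}(q)$ as the normal. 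Writing a point of $r\mathbb{S}^1(q+rsu_{A}(q))$ as $z_{r,\theta}:=q+rsu_{A}(q)+ru(\theta)$, so that $n(z_{r,\theta})=u(\theta)$ and $\mathrm{d}x=r\,\mathrm{d}\theta$, one gets
\begin{equation*}
r^{-1}G^{i}_{r,\mathscr{D}}(s,q;F,A)=\int_{0}^{2\pi}F(z_{r,\theta})\,\mathbf{1}_{B^{i}}(z_{r,\theta})\cdot u(\theta)\,\mathrm{d}\theta ,
\end{equation*}
and the analogous identity for $G^{i}_{r,\mathscr{C}}$ with $u(\theta)$ replaced by $u(\theta)^{\perp}$.

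\textbf{Uniform bounds.} For part i), for $r$ small every $z_{r,\theta}$ lies in the compact set $(\partial A)_{\oplus 2r}\subseteq(\partial A)_{\oplus r_{0}^{i}}$, and $\mathbf{1}_{B^{i}}(z_{r,\theta})\neq0$ forces $z_{r,\theta}\in B^{i}\cap(\partial A)_{\oplus r_{0}^{i}}$, a compact set on which the continuous $F$ is bounded; hence $|r^{-1}G^{i}_{r,\mathscr{D}}|\le 2\pi\sup_{B^{i}\cap(\partial A)_{\oplus r_{0}^{i}}}\|F\|$, uniformly in $r,s,q$, and likewise for $G^{i}_{r,\mathscr{C}}$. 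For part ii), since $F\equiv0$ on $\partial A$ and $F$ is $C^{1}$ on $B^{i}\cap(\partial A)_{\oplus r_{0}}$, for $z\in B^{i}\cap(\partial A)_{\oplus r_{0}}$ the segment joining $z$ to a nearest point $w\in\partial A$ stays in $B^{i}\cap(\partial A)_{\oplus r_{0}}$, so $\|F(z)\|=\|F(z)-F(w)\|\le\mathrm{dist}(z,\partial A)\sup\|DF\|$; applied to $z_{r,\theta}$, whose distance to $\partial A$ is $O(r)$, this gives $\|F(z_{r,\theta})\mathbf{1}_{B^{i}}(z_{r,\theta})\|\le Cr$ and hence $|r^{-2}G^{i}_{r,\mathscr{D}}|\le 2\pi C$, uniformly in $r\le r_{0}$, $s\in[-1,1]$, $q\in\partial A$ (and the same for $G^{i}_{r,\mathscr{C}}$).

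\textbf{Passage to the limit.} We let $r\downarrow0$ in the integral above by dominated convergence. In every case $\mathbf{1}_{B^{i}}(z_{r,\theta})\to\mathbf{1}_{H^{i}}(su_{A}(q)+u(\theta))$ for Lebesgue‑a.e.\ $\theta$: for $\theta$ with $su_{A}(q)+u(\theta)$ in the interior of $H^{i}$ or of its complement this follows from the blow‑up convergence, while $\{\theta:\ s+u(\theta)\cdot u_{A}(q)=0\}$ has at most two points since $|s|<1$. For part i), $F(z_{r,\theta})\to F(q)$ by continuity, so $r^{-1}G^{i}_{r,\mathscr{D}}\to F(q)\cdot\int_{\Theta^{i}(s)}u(\theta)\,\mathrm{d}\theta$ where $\Theta^{i}(s)=\{\theta:su_{A}(q)+u(\theta)\in H^{i}\}$ is an arc symmetric about the direction $(-1)^{i}u_{A}(q)$ cut off by the chord at signed height $s$; the tangential component of the integral vanishes by symmetry and the normal one equals $(-1)^{i}2\sqrt{1-|s|^{2}}$, which is \eqref{eq:convgDFnozero}, and with $u(\theta)^{\perp}$ one gets $(-1)^{i}2\sqrt{1-|s|^{2}}\,v_{A}(q)$ with $v_{A}(q):=u_{A}(q)^{\perp}$, which is \eqref{eq:convgCFnozero}. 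For part ii), $F\equiv0$ on $\partial A$ forces both $F(q)=0$ and $DF(q)u_{A}(q)^{\perp}=0$ at $\mathcal{H}^{1}$‑a.e.\ $q$ (the tangential derivative of a function that is identically $0$ on the curve vanishes); combining the mean value bound with $r^{-1}(w-q)\to$ the orthogonal projection of $u(\theta)$ onto $u_{A}(q)^{\perp}$ one obtains $r^{-1}F(z_{r,\theta})\to(s+u(\theta)\cdot u_{A}(q))\,DF(q)u_{A}(q)$, hence
\begin{equation*}
r^{-2}G^{i}_{r,\mathscr{D}}(s,q;F,A)\to\int_{\Theta^{i}(s)}\bigl(s+u(\theta)\cdot u_{A}(q)\bigr)\bigl(DF(q)u_{A}(q)\cdot u(\theta)\bigr)\,\mathrm{d}\theta .
\end{equation*}
Evaluating this over the two sub‑arcs of the unit circle cut by the tangent line — using that $\int\cos\theta=\int\sin\theta\cos\theta=0$ over each, and that $\nabla\cdot F(q)=DF(q)u_{A}(q)\cdot u_{A}(q)=DF(q)_{11}+DF(q)_{22}$ by the tangency relation — produces $\nabla\cdot F(q)$ times $\pi+|s|\sqrt{1-s^{2}}-\arccos|s|$ on the larger arc and times $\arccos|s|-|s|\sqrt{1-s^{2}}$ on the smaller one, i.e.\ the two cases of \eqref{eq:convgD-1}; replacing $u(\theta)$ by $u(\theta)^{\perp}$ replaces this trace by $\nabla^{\perp}\cdot F(q)=DF(q)_{21}-DF(q)_{12}$, giving \eqref{eq:convgC-1}. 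The error terms (the $o(r)$ in the expansion of $F$, and $\mathbf{1}_{B^{i}}(z_{r,\theta})-\mathbf{1}_{H^{i}}(\cdot)$) are uniformly dominated and integrated over a bounded $\theta$‑interval, hence negligible.

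\textbf{Main obstacle.} The step that requires care is the geometric blow‑up: establishing that at $\mathcal{H}^{1}$‑a.e.\ $q\in\partial A$ the sets $r^{-1}(B^{i}-q)$ converge to the half‑plane $H^{i}$ (in $L^{1}_{\mathrm{loc}}$, equivalently in measure on the unit disk), which is what yields both the a.e.\ convergence of $\mathbf{1}_{B^{i}}(z_{r,\theta})$ in $\theta$ and the limit of $r^{-1}(w-q)$ used in part ii). This is exactly where Lipschitz‑regularity of $\partial A$ enters, through Rademacher differentiability of a Lipschitz parametrisation of $\partial A$, which provides a unique approximate tangent line and hence a unique normal $\mathcal{H}^{1}$‑a.e.; everything else — the Taylor expansion of $F$, the dominated‑convergence passages, and the explicit trigonometric integrals — is routine.
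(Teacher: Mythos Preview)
Your strategy is the same as the paper's: parametrise the small circle, reduce to identifying the limiting arc, and compute the resulting trigonometric integral. The paper does the arc identification by explicitly tracking the angles $\theta(x^{\pm}_{\min,r}),\theta(x^{\pm}_{\max,r})$ of the intersection points $r\mathbb{S}^1(q+rsu_A(q))\cap\partial A$ and showing they converge to $\pi\pm\arccos(s)$; you do it by dominated convergence on the indicator $\mathbf{1}_{B^i}(z_{r,\theta})$. The uniform bounds and the explicit integrals are handled identically. One genuine extra idea in your part~ii) is the observation $DF(q)\,u_A(q)^{\perp}=0$ (tangential derivative of a function vanishing on $\partial A$); the paper does not use this and instead computes $\int DF(q)[su_A+u(\theta)]\cdot u(\theta)\,\mathrm{d}\theta$ directly, which already collapses to $\mathrm{tr}\,DF(q)$ times the arc factor without that observation.

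There is one imprecision worth fixing. You repeatedly state the blow-up convergence as ``$r^{-1}(B^i-q)\to H^i$ in $L^1_{\mathrm{loc}}$'' and then deduce that $\mathbf{1}_{B^i}(z_{r,\theta})\to\mathbf{1}_{H^i}(su_A(q)+u(\theta))$ for a.e.\ $\theta$. That implication is false as written: $L^1_{\mathrm{loc}}$ (equivalently, convergence in $2$-dimensional measure on the unit disk) says nothing about the restriction to a circle, which has Lebesgue measure zero. What actually gives the pointwise convergence you need is the stronger fact, also coming from Rademacher on the bi-Lipschitz parametrisation, that $r^{-1}(\partial A-q)$ converges to the tangent line in \emph{local Hausdorff distance}: near $q$ one has $\varphi(t)=q+(t-t_0)\varphi'(t_0)+o(|t-t_0|)$, so for any fixed $y_0$ with $y_0\cdot u_A(q)\neq 0$ the point $q+ry_0$ is eventually separated from $\partial A$ by a definite margin, and the normal-cone hypothesis then tells you on which side it sits. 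This is exactly what the paper is (tersely) invoking when it writes ``by the Lipschitz-regularity condition on $\partial A$, $\theta(x^{\pm}_{\min,r})\to\pi\pm\arccos(s)$''. Once you replace ``$L^1_{\mathrm{loc}}$'' by this Hausdorff-type statement, your dominated-convergence argument goes through cleanly.
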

\begin{proof}We will only concentrate on $G_{r,\mathscr{D}}^{i}$. Observe that $G_{r,\mathscr{D}}^{i}$ and
	is well defined and measurable for any $r\leq r_{0}$. Now, by the continuity
	of $F$, we have that
	\[
	\left|G_{r,\mathscr{D}}^{i}(s,q;F,A)\right| \leq 2\pi r\sup_{ B^{i}\cap(\partial A)_{\oplus r_{0}^i}}\left\Vert F(q)\right\Vert <\infty,
	\]
	which is the first conclusion in i). On the other hand, $F$ is continuously
	differentiable on $B^{i}\cap(\partial A)_{\oplus r_{0}^i}$ and$\left.F\right|_{\partial A}=0$,
	then by the Mean Value Theorem, we have that for $\mathcal{H}^1$-a.a. $q\in\partial A$
	and $\left|s\right|\leq1$
	\begin{align*}
	\left|G_{r,\mathscr{D}}^{i}(s,q;F,A)\right| & \leq r\int_{0}^{2\pi}\left| F(q+rsu_{A}(q)+ru(\theta))-F(q)\cdot u(\theta) \right|\mathrm{d}\theta\\
	& =r^{2}\int_{0}^{2\pi}\left| A_{r}(q,s,\theta)[su_{A}(q)+u(\theta)]\cdot u(\theta)\right|\mathrm{d}\theta,
	\end{align*}
	where $A_{r}(q,s,\theta)=\int_{0}^{1}DF(q+rx[su_{A}(q)+u(\theta)])\mathrm{d}x$
	with $DF$ the Jacobian matrix of $F$. Since $F$ is continuously
	differentiable on $B^{i}\cap(\partial A)_{\oplus r_{0}}$ and $u_{A}(q)$
	and $u(\theta)$ are unitary, we get
	\[
	\left|G_{r,\mathscr{D}}^{i}(s,q;F,A)\right|\leq r^{2}4\pi\sup_{B^{i}\cap(\partial A)_{\oplus r_{0}}}\left\Vert DF(q)\right\Vert <\infty.
	\]
	The first part of $ii)$ is obtained from this. 
	\begin{figure}[h]
		\center
		\includegraphics[width=0.5\textwidth]{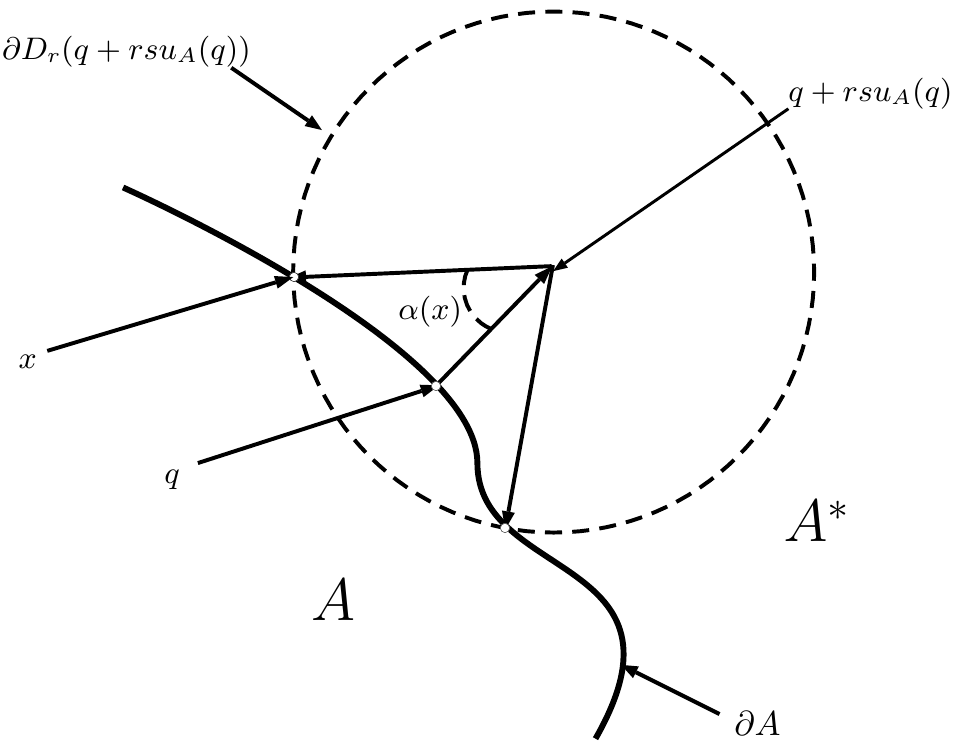}
		\caption{\label{fig:sub1A} The figure illustrates a typical shape of $ r\mathbb{S}^1(q+rsu_{A}(q))\cap\partial A$}
	\end{figure}	
	In what follows, we fix $q\in\partial A$ from which the $u_A(q)$ is well defined and all the limits
	appearing below are taken when $r\downarrow0$. If $s=\pm1$, we have
	that for $r$ small enough, $ r\mathbb{S}^1(q+rsu_{A}(q))\cap B^{i}=\{q\}$ for any $i=1,2$.
	In this case (\ref{eq:convgDFnozero}) and (\ref{eq:convgCFnozero})
	follow trivially, so for the rest of the proof we consider only the
	case when $\left|s\right|<1$. Assume first that $s>0$, $\left.F\right|_{\partial A}\neq0$
	and $i=2$. Let $\phi_{q}$ be the angle of $u_{A}(q)$ and parametrize
	$ r\mathbb{S}^1(q+rsu_{A}(q))$ as
	\[
	\varphi_{q,r,s}(\theta):=q+rsu_{A}(q)+ru(\theta+\phi_{q}),\,\,\,0\leq\theta<2\pi,
	\]
	in such a way that
	\[
	G_{r,\mathscr{D}}^{2}(s,q;F,A)=r\int_{_{\Theta_{r}}}F(\varphi_{q,r,s}(\theta))\cdot u(\theta+\phi_{q})\mathrm{d}\theta,
	\]
	where $\Theta_{r}=\{0\leq\theta<2\pi:\varphi_{q,r,s}(\theta)\in A^{*}\}$.
	We note that $\Theta_{r}$ is based on arcs between the elements of
	$ r\mathbb{S}^1(q+rsu_{A}(q))\cap\partial A$, so we now proceed to
	describe it.	
	Let $\theta(x)$ be such that $x=\varphi_{q,r,s}(\theta(x)).$ The
	mapping $x\mapsto\theta(x)$ is well defined and continuous on the closed set $ r\mathbb{S}^1(q+rsu_{A}(q))\cap\partial A$.
	Indeed, if we denote by $\alpha(x)$ the angle between $rsu_{A}(q)$
	and $x-(q+rsu_{A}(q))$, see Figure \ref{fig:sub1A}, we get by the
	the cosine law and Taylor's Theorem that
	\begin{equation}
	\theta(x)=\pi\pm\alpha(x)=\pi\pm\arccos(s-\frac{x-q}{r}\cdot u_{A}(q)),\,\,\,x\in r\mathbb{S}^1(q+rsu_{A}(q))\cap\partial A,\label{thetaxdef}
	\end{equation}
	according to whether $\theta(x)\in[\pi,2\pi)$ or $\theta(x)\in[0,\pi)$.
	The continuity then follows from this. Hence, we have that there are
	$x_{max,r}^{+},x_{min,r}^{+},x_{max,r}^{-},x_{min,r}^{-}\in r\mathbb{S}^1(q+rsu_{A}(q))\cap\partial A$
	such 
	\begin{align*}
	\theta(x_{min,r}^{-})=\inf\{\theta\in[0,\pi]:\varphi_{q,r,s}(\theta)\in\partial A\}; & \,\,\,\theta(x_{max,r}^{-})=\sup\{\theta\in[0,\pi]:\varphi_{q,r,s}(\theta)\in\partial A\},\\
	\theta(x_{min,r}^{+})=\inf\{\theta\in[\pi,2\pi]:\varphi_{q,r,s}(\theta)\in\partial A\}; & \,\,\,\theta(x_{max,r}^{+})=\sup\{\theta\in[\pi,2\pi]:\varphi_{q,r,s}(\theta)\in\partial A\},
	\end{align*}
	and $0\leq\theta(x_{min,r}^{-})\leq\theta(x_{max,r}^{-})\leq\pi\leq\theta(x_{min,r}^{+})\leq\theta(x_{max,r}^{+})\leq2\pi$.
	Now suppose for a moment that $r^{-1}\left\Vert x_{min,r}^{\pm}-q\right\Vert $
	and $r^{-1}\left\Vert x_{max,r}^{\pm}-q\right\Vert $ are bounded
	over $r$. If this were true, then by the Lipschitz-regularity condition on $\partial A$, we would have $\theta(x_{min,r}^{\pm})\rightarrow\pi\pm\arccos(s)$
	and $\theta(x_{max,r}^{\pm})\rightarrow\pi\pm\arccos(s)$ which, together
	with the fact that $u_{A}(q)=u(\phi_{q})$, would give us that
	\begin{align*}
	r^{-1}G_{r,\mathscr{D}}^{2}(s,q;F,A) & =\int_{-\theta(x_{max,r}^{+})}^{\theta(x_{min,r}^{-})}F(\varphi_{q,r,s}(\theta))\cdot u(\theta+\phi_{q})\mathrm{d}\theta\\
	& +\int_{\Theta_{r}\cap[\theta(x_{min,r}^{+}),\theta(x_{max,r}^{+})]}F(\varphi_{q,r,s}(\theta))\cdot u(\theta+\phi_{q})\mathrm{d}\theta\\
	& +\int_{\Theta_{r}\cap[\theta(x_{min,r}^{-}),\theta(x_{max,r}^{-})]}F(\varphi_{q,r,s}(\theta))\cdot u(\theta+\phi_{q})\mathrm{d}\theta\\
	& \rightarrow\int_{\arccos(s)-\pi}^{\pi-\arccos(s)}F(q)\cdot u(\theta+\phi_{q})\mathrm{d}\theta=2\sqrt{1-s^{2}}F(q)\cdot u_{A}(q),
	\end{align*}
	which is (\ref{eq:convgDFnozero}). We will only check that $r^{-1}\left\Vert x_{min,r}^{-}-q\right\Vert $
	is bounded since the boundness of the other quantities can be shown
	in an similar way. To see the former holds, let $\gamma(x_{min,r}^{-})$
	be as in Figure \ref{fig:sub2A} and note that by the cosine law
	\[
	2r\left\Vert x_{min,r}^{-}-q\right\Vert \cos(\gamma(x_{min,r}^{-}))=\left\Vert x_{min,r}^{-}-q\right\Vert ^{2}+r^{2}(1-s)\geq\left\Vert x_{min,r}^{-}-q\right\Vert ^{2}>0,
	\]
	as claimed. 
	\begin{figure}[h] 
		\center
		\includegraphics[width=0.5\textwidth]{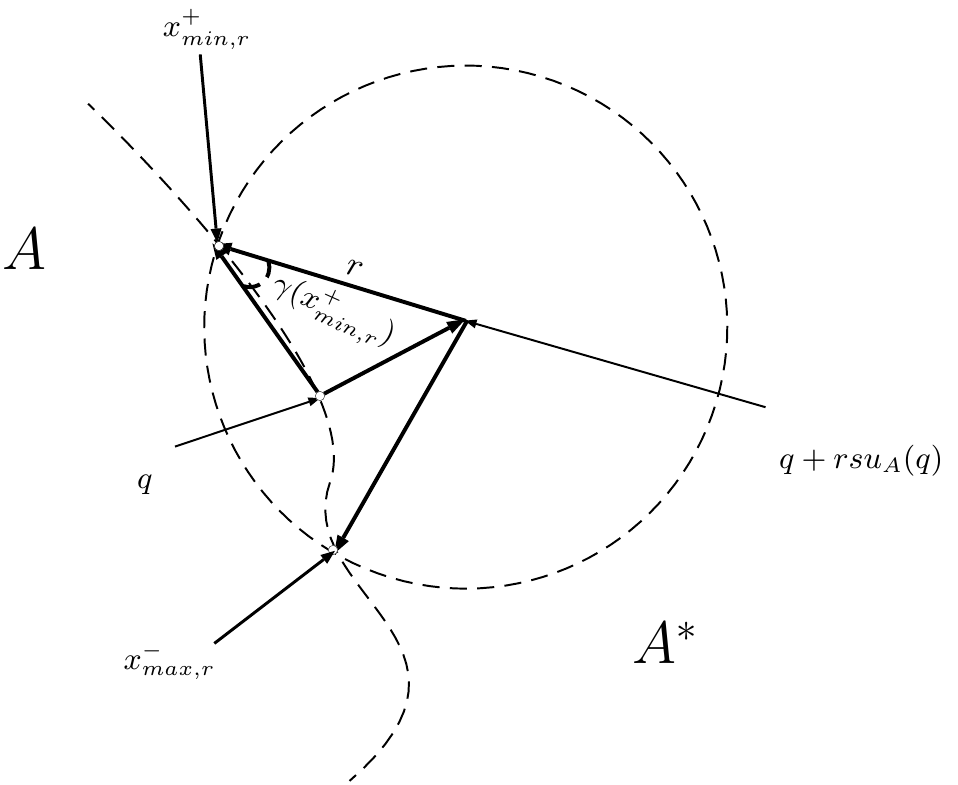}
		\caption{\label{fig:sub2A} The figure shows the construction of the angle $\gamma(x_{min,r}^{-})$.} 
	\end{figure}
	The case $i=1$ follows by noting that
	\begin{align}
	r^{-1}G_{r,\mathscr{D}}^{1}(s,q;F,A) & =r^{-1}\left\{ \ointop_{ r\mathbb{S}^1(q+rsu_{A}(q))}F\cdot n\mathrm{d}x-G_{r,\mathscr{D}}^{2}(s,q;F,A)\right\} \rightarrow-2\sqrt{1-s^{2}}F(q)\cdot u_{A}(q).\label{Intlimitnon-vanis}
	\end{align}
	For the situation when $-1<s<0$, we observe that
	\begin{equation}
	G_{r,\mathscr{D}}^{i}(s,q;F,A)=\ointop_{ r\mathbb{S}^1(q+r\left|s\right|u_{A^{*}}(q))\cap B^{i}}F\cdot n\mathrm{d}x,\label{eq:sneg}
	\end{equation}
	where $u_{A^{*}}(q)$ is the outer vector to $A^{*}$ at $q$. Thus,(\ref{eq:convgCFnozero}) follows by replacing $u_{A}(q)$ by $u_{A^{*}}(q)$ in the preceding arguments. This concludes the proof of (\ref{eq:convgDFnozero}).
	The proof of (\ref{eq:convgCFnozero}) is done by changing $u$ by
	$u^{\perp}$ in the previous reasoning. 	
	Now suppose that $\left.F\right|_{\partial A}=0$, $i=2$ and $1>s>0$.
	Under the notation used above we get that in this case
	\begin{align*}
	r^{-2}G_{r,\mathscr{D}}^{2}(s,q;F,A) & =\int_{-\theta(x_{max,r}^{+})}^{\theta(x_{min,r}^{-})}A_{r}(q,s,\theta)[su_{A}(q)+u(\theta+\phi_{q})]\cdot u(\theta+\phi_{q})\mathrm{d}\theta\\
	& +\int_{\Theta_{r}\cap[\theta(x_{min,r}^{+}),\theta(x_{max,r}^{+})]}A_{r}(q,s,\theta)[su_{A}(q)+u(\theta+\phi_{q})]\cdot u(\theta+\phi_{q})\mathrm{d}\theta\\
	& +\int_{\Theta_{r}\cap[\theta(x_{min,r}^{-}),\theta(x_{max,r}^{-})]}A_{r}(q,s,\theta)[su_{A}(q)+u(\theta+\phi_{q})]\cdot u(\theta+\phi_{q})\mathrm{d}\theta\\
	& \rightarrow\int_{\arccos(s)-\pi}^{\pi-\arccos(s)}DF(q)[su_{A}(q)+u(\theta+\phi_{q})]\cdot u(\theta+\phi_{q})\mathrm{d}\theta\\
	& =\left[DF(q)_{11}+DF(q)_{22}\right](\pi+s\sqrt{1-s^{2}}-\arccos(s)).
	\end{align*}	
	Since
	\begin{align*}
	r^{-2}\ointop_{ r\mathbb{S}^1(q+rsn_{A}(q))}F\cdot n\mathrm{d}x&=\int_{0}^{2\pi}A_{r}(q,s,\theta)[su_{A}(q)+u(\theta+\phi_{q})]\cdot u(\theta+\phi_{q})\mathrm{d}\theta\\
	&\rightarrow\pi[DF(q)_{11}+DF(q)_{22}],
	\end{align*}
	we can then use a similar argument as in (\ref{Intlimitnon-vanis})
	to show that (\ref{eq:convgD-1}) holds when $i=1$ and $s\geq0$.
	Moreover, due to equation (\ref{eq:sneg}), the case $s<0$ in (\ref{eq:convgD-1})
	can be obtained using our former reasoning. Finally, to complete the argument of the proof, we note that the interchange
	of all the previous limits with the integral sign is possible due
	to the uniform bounds shown at the beginning of the proof and the
	Dominated Convergence Theorem.\end{proof}
\begin{remark}\label{remarkO(1)}Arguing as in the
	proof of the previous lemma, it is possible to show that if $F$ is a vector-valued function that is continuous on
	$B^{i}\cap(\partial A)_{\oplus r_{0}}$ for some $r_{0}>0$, then
	\[
	\sup_{r\leq r_{0,}q\in(\partial A)_{\oplus r}}\left|r^{-1}\ointop_{ r\mathbb{S}^1(q)}F\mathbf{1}_{B^{i}}\cdot n\mathrm{d}x\right|<\infty,\,\,\,\text{and}\,\,\,\sup_{r\leq r_{0,}q\in(\partial A)_{\oplus r}}\left|r^{-1}\ointop_{ r\mathbb{S}^1(q)}F(u)\mathbf{1}_{B^{i}}\cdot\mathrm{d}u\right|<\infty.
	\]
	If in addition $F$ is continuously differentiable on $B^{i}\cap(\partial A)_{\oplus r_{0}}$
	and$\left.F\right|_{\partial A}=0$, then
	\[
	\sup_{r\leq r_{0,}q\in(\partial A)_{\oplus r}}\left|r^{-2}\ointop_{ r\mathbb{S}^1(q)}F\mathbf{1}_{B^{i}}\cdot n\mathrm{d}x\right|<\infty,\,\,\,\text{and}\,\,\,\sup_{r\leq r_{0,}q\in(\partial A)_{\oplus r}}\left|r^{-2}\ointop_{ r\mathbb{S}^1(q)}F(u)\mathbf{1}_{B^{i}}\cdot\mathrm{d}u\right|<\infty.
	\]
\end{remark}
\subsection*{An approximation}
The following lemma allows to replace $L$ in the definitions of $\mathscr{C}_{r}$
and $\mathcal{\mathscr{D}}_{r}$ by a strictly $\beta$-stable homogeneous L\'evy basis.
\begin{lemma}\label{keylemmaasymptotics-1}Let $A\subseteq\mathbb{R}^{2}$ be a Jordan domain with Lipschitz-regular boundary and $F:\mathbb{R}^{2}\rightarrow\mathbb{R}^{2}$ be such that $F(-\cdot)$ is as in Lemma \ref{keylemmaasymptotics}. Suppose that $\psi$ is the characteristic
	exponent of an ID distribution on $\mathbb{R}$ and that there exists
	$1\leq\beta\leq2$ for which $r\psi(r^{-1/\beta}\cdot)\rightarrow\psi_{\mu}(\cdot)$,
	as $r\downarrow0$, where $\psi_{\mu}$ is the characteristic exponent
	of an ID distribution $\mu$. Then $\mu$ is (possibly deformed) strictly
	$\beta$-stable. Furthermore, if $n\in\mathbb{N}$, $(p_{j})_{j=1}^{n}\subseteq\mathbb{R}^{2}$,
	$(z_{j})_{j=1}^{n}\subseteq\mathbb{R}$, then for $i=1,2$, as $r\downarrow0$
	\begin{align}
	\left|\int_{\cup_{j=1}^{n}(\partial A+p_{j})_{\oplus r}}\left[\psi(r^{-1/\beta}z_{r,d,n}^{i}(q))-\psi_{\mu}(r^{-1/\beta}z_{r,d,n}^{i}(q))\right]\mathrm{d}q\right| & \rightarrow0,\label{approxd}\\
	\left|\int_{\cup_{j=1}^{n}(\partial A+p_{j})_{\oplus r}}\left[\psi(r^{-1/\beta}z_{r,c,n}^{i}(q))-\psi_{\mu}(r^{-1/\beta}z_{r,c,n}^{i}(q))\right]\mathrm{d}q\right| & \rightarrow0,\label{approxc}
	\end{align}
	where 
	\begin{align*}
	z_{r,d,n}^{i}(q):= & r^{-1}\sum_{j=1}^{n}\mathbf{1}_{(\partial A+p_{j})_{\oplus r}}(q)z_{j}\ointop_{ r\mathbb{S}^1(q)}F(p_{j}-\cdot)\mathbf{1}_{B^{i}}\cdot n\mathrm{d}x;\\
	z_{r,c,n}^{i}(q):= & r^{-1}\sum_{j=1}^{n}\mathbf{1}_{(\partial A+p_{j})_{\oplus r}}(q)z_{j}\ointop_{ r\mathbb{S}^1(q)}F(p_{j}-u)\mathbf{1}_{B^{i}}\cdot\mathrm{d}u.
	\end{align*}
\end{lemma}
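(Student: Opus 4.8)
The plan is to separate the two assertions. The structural claim that $\mu$ is strictly $\beta$-stable follows from a scaling argument: fixing $c>0$ and replacing $r$ by $r/c$ in the hypothesis $r\psi(r^{-1/\beta}z)\to\psi_\mu(z)$ gives $(r/c)\psi\bigl(c^{1/\beta}r^{-1/\beta}z\bigr)\to\psi_\mu(z)$; multiplying by $c$ and letting $r\downarrow0$, the left-hand side also converges to $\psi_\mu(c^{1/\beta}z)$, so $\psi_\mu(c^{1/\beta}z)=c\,\psi_\mu(z)$ for all $c>0$ and $z\in\mathbb{R}$, which is precisely (possibly deformed) strict $\beta$-stability in the sense of \cite{IvanovJ17}. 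Downstream I will only use the case $c=r^{-1}$, i.e.\ the identity $\psi_\mu(r^{-1/\beta}z)=r^{-1}\psi_\mu(z)$.

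For the approximations, the first step is to record that by Lemma \ref{keylemmaasymptotics} and its subsequent remark, applied to the functions $F(p_j-\cdot)$ on the translated parallel sets $(\partial A+p_j)_{\oplus r}$, each of the finitely many summands defining $z^i_{r,d,n}$ and $z^i_{r,c,n}$ is bounded uniformly over small $r$ and over $q$; hence there is a finite $C$ with $|z^i_{r,d,n}(q)|\vee|z^i_{r,c,n}(q)|\le C$ for all relevant $r$ and $q$. Next, since $e^{r\psi(r^{-1/\beta}\cdot)}$ is a characteristic function converging pointwise to the continuous characteristic function $e^{\psi_\mu(\cdot)}$, the convergence is uniform on compacts; taking logarithms (legitimate because characteristic functions of ID laws have no zeros) yields $\varepsilon_r:=\sup_{|z|\le C}\bigl|r\psi(r^{-1/\beta}z)-\psi_\mu(z)\bigr|\to0$ as $r\downarrow0$. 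Combining this with the scaling identity from the previous step, for every $q$ in $D_{r,n}:=\bigcup_{j=1}^{n}(\partial A+p_j)_{\oplus r}$ one gets
\[
\bigl|\psi(r^{-1/\beta}z^i_{r,d,n}(q))-\psi_\mu(r^{-1/\beta}z^i_{r,d,n}(q))\bigr|\le r^{-1}\varepsilon_r ,
\]
so integrating over $D_{r,n}$, whose Lebesgue measure is at most $n\,Leb((\partial A)_{\oplus r})$ by translation invariance, bounds the left-hand side of (\ref{approxd}) by $n\,\varepsilon_r\,r^{-1}Leb((\partial A)_{\oplus r})$. Estimate (\ref{approxc}) is obtained by the identical computation with $u^\perp$ replacing $n$ and using the second bound of the remark following Lemma \ref{keylemmaasymptotics}.

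The only genuinely load-bearing point, and where Assumption \ref{assumption2} enters, is that $r^{-1}Leb((\partial A)_{\oplus r})$ stays bounded as $r\downarrow0$: the Steiner formula of Theorem \ref{lemmadiffeo} expresses $Leb((\partial A)_{\oplus r}\setminus\partial A)$ as an integral of $(r\wedge\delta_A)$-type quantities against the reach measures, and the integrability condition in Assumption \ref{assumption2} makes this $O(r)$. Granting that, $n\,\varepsilon_r\,r^{-1}Leb((\partial A)_{\oplus r})\to0$ and the proof is complete. I expect the main subtlety to be precisely this bookkeeping of orders — the factor $r^{-1}$ produced by the stable scaling must be cancelled exactly by the $O(r)$ Minkowski content of $\partial A$ — together with ensuring that $\varepsilon_r\to0$ controls the integrand uniformly in the $q$-dependence hidden in $z^i_{r,d,n}$, which is what the uniform estimates of Lemma \ref{keylemmaasymptotics} provide.
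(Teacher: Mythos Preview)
Your argument is correct and follows essentially the same route as the paper: both use the uniform bound on $z^{i}_{r,d,n}$ from Remark~\ref{remarkO(1)}, upgrade the pointwise convergence $r\psi(r^{-1/\beta}\cdot)\to\psi_\mu(\cdot)$ to uniform convergence on compacts, and then absorb the resulting $r^{-1}$ by the $O(r)$ measure of $(\partial A)_{\oplus r}$. Two minor differences: the paper cites \cite{IvanovJ17} for strict $\beta$-stability where you give the short scaling proof directly, and for the bound $r^{-1}Leb((\partial A)_{\oplus r})=O(1)$ the paper invokes the Minkowski-content result of \cite{AmbrosioColVil08} (which needs only that $\partial A$ is Lipschitz), whereas you reach for the Steiner formula and the integrability condition in Assumption~\ref{assumption2} --- this works but is heavier machinery than necessary, and note that Theorem~\ref{lemmadiffeo} is stated for $A_{\oplus r}\setminus A$, so to cover $(\partial A)_{\oplus r}$ you would need to apply it to both $A$ and $A^{*}$.
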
	
\begin{proof}The fact that $\mu$ is strictly stable follows by Theorem
	1 in \cite{IvanovJ17}, cf. \cite{Lamp62}. Observe that the convergence
	$r\psi(r^{-1/\beta}\cdot)\rightarrow\psi_{\mu}(\cdot)$ can always
	be strengthen to uniform convergence on compacts. Furthermore, thanks
	to Remark \ref{remarkO(1)}, for any $i=1,2$, we can choose $a<b$
	in such a way that $b\leq z_{r,d,}^{i}(q)\leq a$, for every $r\leq r_{0}^{i}$
	and $q\in\cup_{j=1}^{n}(\partial A+p_{j})_{\oplus r}$. Thus
	\begin{align*}
	\int_{\cup_{j=1}^{n}(\partial A+p_{j})_{\oplus r}}\left|\psi(r^{-1/\beta}z_{r,d,n}^{i}(q))-\psi_{\mu}(r^{-1/\beta}z_{r,d,n}^{i}(q))\right|&\mathrm{d}q\leq n\frac{Leb((\partial A)_{\oplus r})}{r}\\
	&\times\sup_{a\leq u\leq b}r\left|\psi(r^{-1/\beta}u)-\psi_{\mu}(r^{-1/\beta}u)\right|
	\end{align*}
	Since $\partial A$ is Lipschitz, by Theorem 5 and Corollary 1 in \cite{AmbrosioColVil08},
	we get that as $r\downarrow0$, $\frac{Leb((\partial A)_{\oplus r})}{r}\rightarrow2\mathcal{H}^{1}(\partial A)<\infty$.
	A combination of this and the previous estimate, give us (\ref{approxd}).
	The approximation in (\ref{approxc}) is shown in a similar way.		
\end{proof}	

\bibliographystyle{plain}
\bibliography{bibSept17}

\begin{thebibliography}{10}

\bibitem{AmbrosioColVil08}
Luigi Ambrosio, Andrea Colesanti, and Elena Villa.
\newblock Outer minkowski content for some classes of closed sets.
\newblock {\em Mathematische Annalen}, 342(4):727--748, 2008.

\bibitem{BN78}
O.~E. Barndorff-Nielsen.
\newblock Hyperbolic distributions and distributions on hyperbolae.
\newblock {\em Scandinavian Journal of Statistics}, 5(3):151--157, 1978.

\bibitem{BasseBN11}
O.~E. Barndorff-Nielsen and A.~Basse-O'Connor.
\newblock Quasi {O}rnstein-{U}hlenbeck processes.
\newblock {\em Bernoulli}, 17:916--941, 2011.

\bibitem{BNBEnthVeraat11}
O.~E. Barndorff-Nielsen, F.~Benth, and A.~Veraart.
\newblock Ambit processes and stochastic partial differential equations.
\newblock In {\em Advanced Mathematical Methods for Finance}, pages 35--74.
  Springer Berlin Heidelberg, 2011.

\bibitem{BNBEnthVeraat15}
O.~E. Barndorff-Nielsen, F.~Benth, and A.~Veraart.
\newblock Recent advances in ambit stochastics with a view towards
  tempo-spatial stochastic volatility/intermittency.
\newblock {\em Banach Center Publications}, 104:25--60, 2015.

\bibitem{BNBEnthVeraat13}
O.~E. Barndorff-Nielsen, F.~E. Benth, and A.~Veraart.
\newblock Modelling energy spot prices by volatility modulated {L}\'evy-driven
  {V}olterra processes.
\newblock {\em Bernoulli}, 19(3):803--845, 2013.

\bibitem{BNPed12}
O.~E. Barndorff-Nielsen and J.~Pedersen.
\newblock Meta-times and extended subordination.
\newblock {\em Theory of Probability \& Its Applications}, 56(2):319--327,
  2012.

\bibitem{BNSch07}
O.~E. Barndorff-Nielsen and J.~Schmiegel.
\newblock Ambit processes; with applications to turbulence and cancer growth.
\newblock In Fred~Espen Benth, Giulia Di~Nunno, Tom Lindstr{\o}m, Bernt
  {\O}ksendal, and Tusheng Zhang, editors, {\em Stochastic Analysis and
  Applications: The Abel Symposium 2005}. Springer Berlin Heidelberg, 2007.

\bibitem{BasseGravPed13}
A.~Basse-O'Connor, S.E. Graversen, and J.~Pedersen.
\newblock Stochastic integration on the real line.
\newblock {\em Theory of Probability and Its Applications}, 58:355--380, 2013.

\bibitem{BasseHeinrPod17}
A.~Basse-O'Connor, Claudio Heinrich, and Mark Podolskij.
\newblock On limit theory for {L}\'evy semi-stationary processes.
\newblock {\em To appear in Bernoulli}, 2017.

\bibitem{BasseLachPod17}
A.~Basse-O'Connor, R.~Lachi\`eze-Rey, and Mark Podolskij.
\newblock Power variation for a class of stationary increments {L}\'evy driven
  moving averages.
\newblock {\em To appear in Annals of Probability}, 2017.

\bibitem{BassePed09}
A.~Basse-O'Connor and Jan Pedersen.
\newblock L\'evy driven moving averages and semimartingales.
\newblock {\em Stochastic Processes and their Applications}, 119(9):2970 --
  2991, 2009.

\bibitem{BasseRosinski13}
A.~Basse-O'Connor and J.~Rosi{\'n}ski.
\newblock Characterization of the finite variation property for a class of
  stationary increment infinitely divisible processes.
\newblock {\em Stochastic Processes and their Applications}, 123(6):1871--1890,
  2013.

\bibitem{Bertoin98}
J.~Bertoin.
\newblock {\em L{\'e}vy Processes}.
\newblock Cambridge Tracts in Mathematics. Cambridge University Press, UK.,
  1998.

\bibitem{BoffeEcke12}
Guido Boffetta and Robert~E. Ecke.
\newblock Two-dimensional turbulence.
\newblock {\em Annual Review of Fluid Mechanics}, 44(1):427--451, 2012.

\bibitem{BravermanSanorod98}
Michael Braverman and Gennady Samorodnitsky.
\newblock Symmetric infinitely divisible processes with sample paths in
  {O}rlicz spaces and absolute continuity of infinitely divisible processes.
\newblock {\em Stochastic Processes and their Applications}, 78(1):1--26, 1998.

\bibitem{CairoliWalsh75}
R.~Cairoli and John~B. Walsh.
\newblock Stochastic integrals in the plane.
\newblock {\em Acta Math.}, 134:111--183, 1975.

\bibitem{ChongKlupp15}
Carsten Chong and Claudia Kl\"{u}ppelberg.
\newblock Integrability conditions for space{--}time stochastic integrals:
  {T}heory and applications.
\newblock {\em Bernoulli}, 210(4):2190--2216, 2015.

\bibitem{CorcueraHedPakkPod13}
Jos\'e~Manuel Corcuera, Emil Hedevang, Mikko~S. Pakkanen, and Mark Podolskij.
\newblock Asymptotic theory for {B}rownian semi-stationary processes with
  application to turbulence.
\newblock {\em Stochastic Processes and their Applications}, 123(7):2552--2574,
  2013.

\bibitem{EvansGar92}
L.~C. Evans and R.~F. Gariepy.
\newblock {\em Measure Theory and Fine Properties of Functions}.
\newblock Studies in Advanced Mathematics. CRC Press, Florida, 1992.

\bibitem{Federer59}
Herbert Federer.
\newblock Curvature measures.
\newblock {\em Trans. Amer. Math. Soc.}, 93:418--491, 1959.

\bibitem{Gatz00}
Dimitris Gatzouras.
\newblock Lacunarity of self-similar and stochastically self-similar sets.
\newblock {\em Transactions of the American Mathematical Society},
  352(5):1953--1983, 2000.

\bibitem{Harrison99}
J~Harrison.
\newblock Flux across nonsmooth boundaries and fractal
  {G}auss/{G}reen/{S}tokes' theorems.
\newblock {\em Journal of Physics A: Mathematical and General}, 32(28):5317,
  1999.

\bibitem{HauslerLuschgy15}
Erich H{\"{a}}usler and Harald Luschgy.
\newblock {\em Stable Convergence and Stable Limit Theorems}.
\newblock Probability Theory and Stochastic Modelling. Springer International
  Publishing Switzerland, 2015.

\bibitem{HedSch14}
E.~Hedevang and J.~Schmiegel.
\newblock A {L\'e}vy based approach to random vector fields: With a view
  towards turbulence.
\newblock {\em International Journal of Nonlinear Sciences and Numerical
  Simulation}, 15(7-8):411--435, 2014.

\bibitem{Hsu2002}
Elton~P. Hsu.
\newblock {\em Stochastic Analysis on Manifolds}.
\newblock Graduate Studies in Mathematics 38. American Mathematical Society,
  2002.

\bibitem{HugGuntWel04}
Daniel Hug, G{\"u}nter Last, and Wolfgang Weil.
\newblock A local steiner--type formula for general closed sets and
  applications.
\newblock {\em Mathematische Zeitschrift}, 246(1):237--272, 2004.

\bibitem{IkedaManabe79}
Nobuyuki Ikeda and Shojiro Manabe.
\newblock Integral of differential forms along the path of diffusion processes.
\newblock {\em Publications of the Research Institute for Mathematical
  Sciences}, 15(3):827--852, 1979.

\bibitem{IvanovJ17}
J.~Ivanovs.
\newblock {Zooming in on a {L}\'evy process at its supremum}.
\newblock {\em ArXiv e-prints}, 2016.

\bibitem{Kallenberg02}
O.~Kallenberg.
\newblock {\em Foundations of Modern Probability}.
\newblock Probability and Its Applications. Springer New York, 2002.

\bibitem{Katz79}
Victor~J. Katz.
\newblock The history of {S}tokes' theorem.
\newblock {\em Mathematics Magazine}, 52(3):146--156, 1979.

\bibitem{Lamp62}
John Lamperti.
\newblock Semi-stable stochastic processes.
\newblock {\em Trans. Amer. Math. Soc.}, 104:62--78, 1962.

\bibitem{Pakkanen11}
MS~Pakkanen.
\newblock Brownian semistationary processes and conditional full support.
\newblock {\em International Journal of Theoretical and Applied Finance},
  14:579--586, 2011.

\bibitem{Ped03}
J.~Pedersen.
\newblock The {L}\'evy-{I}t\^o decomposition of an independently scattered
  random measure.
\newblock {\em MaPhySto preprint MPS-RR}, 2003.

\bibitem{PedSau15}
Jan Pedersen and Orimar Sauri.
\newblock On {L}\'evy semistationary processes with a gamma kernel.
\newblock In {\em XI Symposium on Probability and Stochastic Processes},
  volume~69 of {\em Progress in Probability}, pages 217--239. Springer
  International Publishing Switzerland, 2015.

\bibitem{Raible00}
S.~Raible.
\newblock {\em L\'evy processes in finance: {T}heory, numerics, and empirical
  facts}.
\newblock PhD thesis, University of Freiburg, 2000.

\bibitem{RajputRosinski89}
Balram~S. Rajput and Jan Rosi{\'n}ski.
\newblock Spectral representations of infinitely divisible processes.
\newblock {\em Probability Theory and Related Fields}, 82(3):451--487, 1989.

\bibitem{RatajLud17}
Jan Rataj and Lud\v~ek Zaj\'\i~\v cek.
\newblock On the structure of set with positive reach.
\newblock {\em Math. Nachr.}, 290(11-12):1806--1829, 2017.

\bibitem{RivWuYeun01}
Michael Rivera, Xiao-Lun Wu, and Chuck Yeung.
\newblock Universal distribution of centers and saddles in two-dimensional
  turbulence.
\newblock {\em Phys. Rev. Lett.}, 87:044501, 2001.

\bibitem{Rosinski07}
Jan Rosi{\'n}ski.
\newblock L\'evy and related jump-type infinitely divisible processes. {Lecture
  Notes, Cornell University}, 2007.

\bibitem{Rosinski16}
Jan Rosi{\'n}ski.
\newblock Representations and isomorphism identities for infinitely divisible
  processes.
\newblock {\em ArXiv e-prints}, 2016.

\bibitem{Sato99}
K.~Sato.
\newblock {\em {L}\'evy Processes and Infinitely Divisible Distributions}.
\newblock Cambridge University Press, UK, 1st edition, 1999.

\bibitem{Young36}
L.C. Young.
\newblock An inequality of the {H\"{o}}lder type, connected with {S}tieltjes
  integration.
\newblock {\em Acta Mathematica}, 67(1):251--282, 1936.

\bibitem{Zust11}
Roger Z{\"u}st.
\newblock Integration of {H\"o}lder forms and currents in snowflake spaces.
\newblock {\em Calculus of Variations and Partial Differential Equations},
  40(1):99--124, 2011.

\end{thebibliography}

\end{document}